\documentclass[a4paper,12pt]{amsart}
\usepackage[utf8]{inputenc}
\usepackage[T1]{fontenc}
\usepackage[UKenglish]{babel}
\usepackage[a4paper,margin=28mm]{geometry}
\usepackage{verbatim}
\usepackage{xcolor}
\usepackage[round,authoryear]{natbib}
\allowdisplaybreaks[4]
\usepackage{times}
\usepackage{dsfont,mathrsfs}
\usepackage{amsmath}
\usepackage{amsthm}
\usepackage{amssymb}
\usepackage{amsfonts}
\usepackage{latexsym}
\usepackage{booktabs}
\usepackage{marginnote}
\newcommand{\vp}{\varphi}

\usepackage[colorlinks,linkcolor=red,pagebackref]{hyperref}
\usepackage{xcolor}

\usepackage{ulem}

\newtheorem{theorem}{Theorem}[section]
\newtheorem{lemma}[theorem]{Lemma}
\newtheorem{proposition}[theorem]{Proposition}
\newtheorem{corollary}[theorem]{Corollary}
\newtheorem{assumption}[theorem]{Assumption}

\theoremstyle{definition}

\theoremstyle{remark}
\newtheorem{remark}[theorem]{Remark}
\makeatletter
\def\th@remark{\itshape}
\makeatother
\numberwithin{equation}{section}
\renewcommand{\labelenumi}{\roman{enumi})}
\renewcommand\theenumi\labelenumi
\renewcommand{\leq}{\leqslant}
\renewcommand{\le}{\leqslant}
\renewcommand{\geq}{\geqslant}
\renewcommand{\ge}{\geqslant}

\newcommand{\Acknowledgements}[1]{%
	\section*{Acknowledgements} 
	#1
}


\newcommand{\Be}{\begin{equation}}
	\newcommand{\Ees}{\end{equation*}}
\newcommand{\Bes}{\begin{equation*}}
	\newcommand{\Ee}{\end{equation}}

\newcommand{\R}{\mathbb{R}}
\renewcommand{\P}{\mathbb{P}}
\newcommand{\E}{\mathbb{E}}
\newcommand{\e}{\varepsilon}

\newcommand{\dif}{\mathrm{d}}


\begin{document}
	\title[Normal approximation for LMC]
	{High-dimensional normal approximations for sums of  Langevin Markov chains}

	\author[T. Shen]{Tian Shen}
\address{Tian Shen: School of Statistics and Data Science, Shanghai University of International Business and Economics, Shanghai, China;}
\email{shentian@suibe.edu.cn}
	\author[Z. G. Su]{Zhonggen Su}
	\address{Zhonggen Su: School of Mathematical Sciences, Zhejiang University, China.}
	\email{suzhonggen@zju.edu.cn}
	
	\author[X. L. Wang ]{Xiaolin Wang}
	\address{Xiaolin Wang: Department of Statistics and Data Science, The Chinese University of Hong Kong, Hong Kong.}
	\email{xiaolinwang@cuhk.edu.hk}

	\keywords{Convergence rate; Exchangeable pair; Normal approximation; Stein's method; Wasserstein distance. }
	\subjclass[2020]{60F10;  60J25.}

	\begin{abstract}
		
	Consider the well-known Langevin diffusion on $\R^d$
\begin{align*}
    \dif X_t = -\nabla U(X_t)\,\dif t + \sqrt{2}\dif B_t,
\end{align*}
 and its Euler-Maruyama discretization  given by
\begin{align*}
    X_{k+1}=X_k-\eta \nabla U(X_k)+\sqrt{2\eta }\xi_{k+1},
\end{align*}
where $\eta$ is the step size. Under mild conditions, the Langevin diffusion  admits $\pi(\dif x)\propto \exp(-U(x))\dif x$  as its  unique stationary distribution.
 In this paper, we mainly  study the normal approximation  of the normalized partial sum 
\begin{align*}
    W_n = \eta^{1/2} n^{-1/2} \left( \sum_{i=0}^{n-1} X_i 
 - \int_{\mathbb{R}^d} x\,\pi(\dif x) \right).
\end{align*}
To the best of 
our knowledge, this work provides the first dimension-explicit convergence rates   in high-dimensional settings. Our main tool is a novel upper bound for the 1-Wasserstein distance $\mathcal W_1(W,\gamma)$ via the exchange pair approach, where $W$ is any random vector of interest and $\gamma$ is a $d$-dimensional standard normal random vector.

	\end{abstract}

	\maketitle
	
	\section{Introduction and Main Results}\label{S1}

	This work is motivated by two quickly growing topics in high-dimensional probability and statistics. One is the the analysis of normal approximations for normalized sums of random vectors, while the other is the quantitative convergence analysis of the Euler-Maruyama discretization of  the high-dimensional Langevin diffusion.  Indeed, much of the work in these two direction has been propelled by the fact both normal approximations and effective samplings are essential tools for a wide variety of inference problems in  machine learning, signal engineering and computing sciences.  To begin, let us introduce some concepts and  notations, and briefly review relevant materials on sampling methods and normal approximations.
	\subsection{Sampling}
	Sampling distributions over high-dimensional state-spaces is a problem which has recently attracted a lot of research effort in computational statistics and machine learning, see \cite{CRSTW2013} and \cite{ADJ2003}  for details. Applications include Bayesian non-parametrics, Bayesian inverse problems and aggregation of estimators. Among a rich arsenal of the algorithms in the sampling literature, the Euler-Maruyama discretization of the Langevin diffusion is one of the most fundamental algorithms although it does not achieve state-of-the art complexity  bounds.

	Let $\pi$ be a target distribution on $(\mathbb{R}^d, {\mathcal B}(\mathbb{R}^d))$, and assume that $\pi$ has a probability density w.r.t. the Lebesgue  measure on $\mathbb{R}^d$, denoted still by $\pi(x)$. Write
	\[
	\pi(x)=\frac{1}{Z}e^{-U(x)}, \quad x\in \mathbb{R}^d
	\]
	where $Z$ is a normalizing constant, $U(x)$ is  a potential function.

	The  classical sampling problem is to determine the minimum number of queries to a first-order oracle required to output an approximate  sample from the probability distribution $\pi$.  
	The so-called Euler-Maruyama discretization  is based on  the overdamped Langevin stochastic differential equation
	\begin{eqnarray}
		\dif X_t=-\nabla U(X_t)\dif t +\sqrt 2 \dif B_t, \label{LSDE}
	\end{eqnarray}
	where $(B_t, t\ge 0)$ is a $d$-dimensional Brownian motion.

	Assume that the function $U$ is continuously differentiable on $\mathbb{R}^d$, and gradient Lipschitz, i.e. there exists $\beta \ge 0$ such that  for all $x, y\in \mathbb{R}^d$,
	\[
	\|\nabla U(x)-\nabla U(y)\|\le \beta \|x-y\|. 
	\]
	Then by \cite{IW1989}, for every initial point $x\in \mathbb{R}^d$, there exists a unique strong solution $(X_t, t\ge 0)$ to the Langevin SDE (\ref{LSDE}). Define for each $t\ge 0$, $x\in \mathbb{R}^d$, and $A\in {\mathcal B}(\mathbb{R}^d)$,
	\[
	P_t(x, A)=\P(X_t \in A).
	\]
	The semigroup $(P_t, t\ge 0)$ is reversible w.r.t. $\pi$ and  so admits $\pi$ as its unique invariant distribution.  Denote by ${\mathcal  A}$ the generator associated with semigroup $(P_t, t\ge 0)$ given for all $f\in {\mathcal C}^2(\mathbb{R}^d)$ by
	\[
	{\mathcal  A}f=-\langle \nabla U, \nabla f  \rangle+\Delta f.
	\]
	
	Under suitable conditions, the convergence to $\pi$ takes place at geometric rate. Precise quantitative estimates of the rate of convergence with explicit dependency on the dimension $d$ of the state space have been recently obtained using either functional inequalities such as  Poincar\'{e} and log-Sobolev inequalities  or by coupling techniques,  see \cite{BCG2000, BGL2014, CG2009, Eberle2015}.
	In particular, assume that the function $U$ is  $\alpha$-strongly convex  and $\beta$-smooth:
	\[
	\frac{\alpha}{2}\|x-x'\|^2\le U(x')-U(x)-\langle \nabla U(x), x'-x \rangle \le \frac{\beta}{2}\|x-x'\|^2, \quad x, x'\in \mathbb{R}^d
	\]
	where $\alpha, \beta$ are positive constants.  It is equivalent to assuming that for $0\le \alpha\le \beta  $,
	\begin{align}\label{2003}
		\alpha I_d \preceq \nabla^2 U\preceq \beta I_d.
	\end{align}
	If the potential function $U$ satisfies \eqref{2003},  the convergence rate of Langevin dynamics to its stationary distribution $\pi$ under 2-Wasserstein distance is (see \cite[Theorem 1.4.11]{Chewi2025})
	\begin{eqnarray}
		{\mathcal W}_2(\pi_t, \pi)\le e^{-\alpha t} {\mathcal W}_2(\pi_0, \pi). \label{W2}
	\end{eqnarray}

	The Euler-Maruyama discretization scheme associated to the Langevin diffusion yields the discrete time Markov chain given by
	\begin{eqnarray}
		X_{k+1}=X_k-\eta_{k+1}\nabla U(X_k)+\sqrt{2\eta_{k+1}}\xi_{k+1}, \label{EM-1}
	\end{eqnarray}
	where $(\xi_k, k\ge 1)$ is an i.i.d. sequence of standard Gaussian $d$-dimensional random vectors and $(\eta_k, k\ge 1) $ is a sequence of step sizes, which can either be held constant, or be chosen to decrease to 0. The idea of using the Markov chain $(X_k, k\ge 0)$ to sample approximately from the target $\pi$ has been first introduced in the physics literature by \cite{Parisi1981} and popularized in the computational statistics community by \cite{G1983, GM1994}. They coin the term unadjusted Langevin algorithm (ULA), is commonly known as the Langevin Monte Carlo (LMC) algorithm.

	In this paper we focus on the case when the step size $\eta_k=\eta$ is constant,  in which (\ref{EM-1}) becomes
	\begin{eqnarray}
		X_{k+1}=X_k-\eta \nabla U(X_k)+\sqrt{2\eta }\xi_{k+1}. \label{LMC-1}
	\end{eqnarray}
	Under appropriate conditions \cite{RT1996}, the Markov chain $(X_n, n\ge 0)$ is $V$-uniformly geometrically ergodic with a stationary distribution $\pi_\eta$. With few exceptions, the stationary distribution $\pi_\eta$ is different from the target $\pi$. It is an important challenge to understand whether the well-known properties of $\pi$ extend to $\pi_\eta$. \cite{al24}  provides a first step in this direction by
	establishing concentration results for $\pi_\eta$  that mirror classical results for $\pi$. Specifically, the authors show 
	that for  stepsize $\eta$ small enough, $\pi_\eta$  is sub-exponential (respectively, sub-Gaussian) when
	the potential is convex (respectively, strongly convex). Moreover, the concentration bounds   are essentially tight.  If the step size $\eta$ is small enough, then the stationary distribution of the chain is in some sense close to $\pi$. Numerous works have studied the  nonasymptotic bounds between the two stationary distributions. For instance, \cite{dalalyan2017theoretical}, \cite{DM2017}, \cite{durmus2019analysis} and \cite{Fang2018} established such bounds under total variation distance, weighted total variation distance, 2-Wasserstein distance and $1$-Wasserstein distance, respectively.

	It is known from (\ref{W2}) that the convergence rate is independent of the dimension $d$. However,  after discretization using the Euler scheme, the resulting algorithm  has a mixing rate that scales  as $O(d)$. As this result makes clear,  the principal difficulty in high-dimensional sampling problem based on Langevin diffusion is the numerical error that arises from the integration of the continuous-time dynamics. In particular, assume $\nabla U(0)=0$ and $\eta\le (3\beta)^{-1}$, then
	\[
	\alpha {\mathcal W}_2^2(\pi_n, \pi)\le e^{-\alpha n\eta}\alpha {\mathcal W}_2^2(\pi_0, \pi)+O\big(\alpha^{-2} \beta^4 d\eta^2+ \alpha^{-1} \beta^2 d\eta\big),
	\]
	where  $ {\pi}_n$ denotes the law of $X_n$ in \eqref{LMC-1}. 
	If $X_0=0$, and take $\eta\asymp \frac{\alpha \varepsilon^2}{\beta^2 d}$, then for any $\varepsilon\in [0, \sqrt d]$ we obtain the guarantee  $\sqrt \alpha {\mathcal W}_2 (\pi_n, \pi)\le \varepsilon$ after $n=O(\frac{\tau ^2 d}{\varepsilon^2} \log \frac{d}{\varepsilon})$ iterations. See Chapter 4 \cite{Chewi2025} for more details.
	
	\subsection{High-dimensional Normal approximation}
	
	Define
	\begin{eqnarray}
		W_n = \eta^{1/2}n^{-1/2}\left( \sum_{i=0}^{n-1}  X _i - \int_{x\in \mathbb{R}^d}x\cdot\pi(\dif x)  \right). \label{LMC-2}
	\end{eqnarray}
	This is the object of  our study. In fact, we are  devoted to studying the asymptotic behaviors of $W_n$ and to providing the convergence rate of normal  approximations. Before stating our main results, let us  briefly review the modern literature on high-dimensional normal approximations. To this,  a natural starting point is the classical Berry-Esseen bound for sums of independent random variables. Let $X_k, 1\le k\le n$ be a sequence of independent real-valued random variables, $\E X_k=0$, $\E X_k^2=\sigma_k^2<\infty$, $\E |X_k|^3<\infty$. Set $S_n=\sum_{k=1}^n X_k$ where $B_n=\sum_{k=1}^n \sigma^2_k$. Then
	\[
	\sup_{x\in\mathbb{ R}}\left|\P(\frac{S_n}{\sqrt {B_n}}\le x)-\Phi(x)\right|\le A\frac{\sum_{k=1}^n \E |X_k|^3}{B_n^{3/2}},
	\]
	where $\Phi(x)$ stands for the standard normal distribution function, $A$ is a numeric constant. In particular, if the $X_n$ are identically distributed, then it follows
	\[
	\sup_{x\in\mathbb{ R}}\left|\P(\frac{S_n}{\sqrt {n}}\le x)-\Phi(x)\right|\le A\frac{ \E |X_1|^3}{ \sqrt n}.
	\]
	The upper bound $O(n^{-1/2})$ is arguably believed to be optimal unless some restrictive assumptions  are made on the distribution.
	
	There are multivariate Berry-Esseen bounds for $d$-dimensional random vectors. But it does not work well whenever the dimension $d$  gets big enough. In fact, in high-dimensional setting, a core issue is to extend the above Berry-Esseen bound and quantify the normal approximation with as explicit as possible dependence on the dimension.
	
	Let $X_k, 1\le k\le n$ be a sequence of independent identically distributed random vectors in $\mathbb{R}^d$ with $\E X_k=0$ and $\operatorname{Cov}(X_k)=I_d$. Define $S_n= \sum_{k=1}^n X_k$,  In his seminar paper \cite{Bentkus2003}, Bentkus showed that under suitable moment conditions,
	\[
	\sup_{A\in {\mathcal A}}\left|\P(\frac{S_n}{\sqrt n}\in A)-\gamma (A)\right|\le C \frac{d^{7/4}}{\sqrt n},
	\]
	where $\gamma$ is a standard $d$-dimensional normal distribution, ${\mathcal A}$ is the class of all Borel convex subsets of $\mathbb{R}^d$.  See also \cite{Bentkus2004}, \cite{Raic2019} for refinements, as well as the recent work  \cite{FK2020}, which improved the rate to $\frac{d^{7/4}}{\sqrt n}$, up to a logarithmic factor.  However, despite  the strength of such results, they are not directly applicable to situations where $d$ is larger than $n$. The paper \cite{CCK2013} achieved a breakthrough by demonstrating under some mild regularity assumptions
	\[
	\sup_{A\in {\mathcal R}}\left|\P (\frac{S_n}{\sqrt{n}}\in A)-\gamma (A)\right|\le C \frac{(\log dn)^{7/8}}{  n^{1/8}},
	\]
	where ${\mathcal R}=\{\prod_{i=1}^d[a_i, b_i]: -\infty<a_i<b_i<\infty\} $ is the class of hyperectangles in $\mathbb{R}^d$.

	Assume $\operatorname{Cov}(X_k)=\Sigma$. Recently  by exploiting the regularity of $X_k$, several authors have succeed in getting bounds with $1/\sqrt n$ rates  up to $\log n$ factors when $\Sigma$ is non-degenrate, see \cite{FK2020},  \cite{Lopes2022},  \cite{KR2020}, \cite{CCK2013}. In particular, by Corollary 1.1 in  \cite{FK2020}, if $X_1$ is log-concave, then
	\begin{align}\label{logcon}
	    \sup_{A\in {\mathcal R}}|P(\frac{S_n}{\sqrt{n}}\in A)-\gamma (A)|\le \frac{C}{\sigma_*^2} \frac{(\log d)^{3/2}\log n}{\sqrt n},
	\end{align}
	
	where $\sigma_*^2$ is the smallest eigenvalue of $ \Sigma$. The bound is rate-optimal up to the $\log n$ factor in the sense of \cite[Proposition~1.2]{FK2020}.  In  \cite[Corollary~2.1]{CCK2013}, Chernozhukov, Chetverikov and Kato gave a similar bound to \eqref{logcon} without log-concavity when $X_{1j}$ are uniformly bounded.  Only recently,  \cite{FK2024} obtained a sharp bound even when $\Sigma$ is degenerate provided that $X_1$ is log-concave. In particular, suppose that $rank (\Sigma)\ge 1$, then
	\[
	\sup_{A\in {\mathcal R}}|P(\frac{S_n}{\sqrt{n}}\in A)-\gamma (A)|\le C\frac{\omega_r^{1/2}\log (dn) (\log(2d))^{1/2}}{\sqrt n},
	\]
	where the right-hand side is  independent of $\Sigma$.

	In the paper  \cite{FK2020}, Fang and Koike also considered sums of random vectors with  a local dependence structure. Only a slower convergence rate is obtained in terms of third moment error since there is no longer an underlying symmetry we can exploit.
	
	\subsection{Main results}
	Now we are ready to state our main hypothesis  and results as follows.
	
	\begin{assumption} \label{Ass1} Let  $\pi$ be a probability distribution on $(\mathbb{R}^d, {\mathcal B}(\mathbb{R}^d))$  having density $\pi(x)=\frac{1}{Z} e^{-U(x)}$ w.r.t the Lebesgue measure on $\mathbb{R}^d$. Assume that the potential function $U(x)$ satisfies the following conditions
		
		(i) $U(x)$ is continuously differentiable up to five times;
		
		(ii) $\nabla(U)(0)=0$;
		
		(iii) $U(x)$  is $\alpha$-strongly convex  and $\beta$-smooth, 
		\begin{align*}
			\frac{\alpha}{2}\|x-y\|^2\leq U(y)-U(x)-\langle\nabla U(x),y-x\rangle\leq\frac{\beta}{2}\|x-y\|^2,\quad x,y\in\R^d
		\end{align*}
		where $\alpha, \beta$ are positive constants;
		
		(iv) $\|\nabla^3 U(x)\|_{\mathrm{op}}$, $\|\nabla^4 U(x)\|_{\mathrm{op}}$ and $\|\nabla^5 U(x)\|_{\mathrm{op}}$ are uniformly bounded by $M$.
		
	\end{assumption}
	\begin{theorem}\label{T2} Let $U(x)$ satisfy \textbf{Assumption} \ref{Ass1}, step size  $\eta\in \big(0,\alpha/(2\beta^2)\big)$,  and run the Langevin Monte Carlo algorithm with $X_0\sim\pi_\eta$ . Define the $X_k$ and $W_n$ as in (\ref{LMC-1}) and (\ref{LMC-2}), respectively. Then we have
		\begin{align}\label{mainresult}
			{\mathcal W}_1(\Sigma^{-1/2}W_n, \gamma)\le &C[d^{5/2}((n\eta)^{-1/2}+n^{-1/2}d^{1/2}\log (nd)+\eta^{1/2}d^{1/2}+\eta^{3/2}n^{1/2})],
		\end{align}
		where $\Sigma$ is an invertible matrix defined below (see \eqref{cm}).
	\end{theorem}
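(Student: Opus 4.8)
The plan is to apply the general exchangeable-pair upper bound for $\mathcal{W}_1(W,\gamma)$ that is the paper's main technical tool to a Stein pair built directly from the Langevin recursion, after first peeling off the deterministic bias created by $\pi_\eta\neq\pi$. Writing $\mu_\eta=\int x\,\pi_\eta(\dif x)$ and $\mu=\int x\,\pi(\dif x)$, set $\widetilde W_n=\eta^{1/2}n^{-1/2}\sum_{i=0}^{n-1}(X_i-\mu_\eta)$, which is genuinely centred under stationarity; then $W_n=\widetilde W_n+\eta^{1/2}n^{1/2}(\mu_\eta-\mu)$, so $\mathcal{W}_1(\Sigma^{-1/2}W_n,\gamma)\le\mathcal{W}_1(\Sigma^{-1/2}\widetilde W_n,\gamma)+\|\Sigma^{-1/2}\|_{\mathrm{op}}\,\eta^{1/2}n^{1/2}\|\mu_\eta-\mu\|$. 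Since $\|\mu_\eta-\mu\|\le\mathcal{W}_1(\pi_\eta,\pi)$ is of order $\eta$ up to a power of $d$ under Assumption~\ref{Ass1}, by the discretisation-error estimates of \cite{Fang2018,DM2017,durmus2019analysis}, the second summand produces the $\eta^{3/2}n^{1/2}$ contribution in \eqref{mainresult}, and it remains to bound $\mathcal{W}_1(\Sigma^{-1/2}\widetilde W_n,\gamma)$.

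For the exchangeable pair I would let $I$ be uniform on $\{1,\dots,n\}$, independent of the chain, draw an independent standard Gaussian $\xi_I'$, and let $(X_i^*)$ be the path obtained from $(X_i)$ by replacing the innovation $\xi_I$ with $\xi_I'$ (so $X_i^*=X_i$ for $i<I$, $X_I^*=X_I+\sqrt{2\eta}(\xi_I'-\xi_I)$, with the recursion continued afterwards). Putting $\widetilde W_n'=\eta^{1/2}n^{-1/2}\sum_{i=0}^{n-1}(X_i^*-\mu_\eta)$, the i.i.d.\ structure of the innovations makes $(\widetilde W_n,\widetilde W_n')$ exchangeable, and $\widetilde W_n'-\widetilde W_n=\eta^{1/2}n^{-1/2}\sum_{i\ge I}(X_i^*-X_i)$. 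The restriction $\eta\in(0,\alpha/(2\beta^2))$ gives the one-step contraction $\|I-\eta\nabla^2U(x)\|_{\mathrm{op}}\le1-\alpha\eta$, so $X_i^*-X_i$ decays geometrically in $i-I$; hence $\widetilde W_n'-\widetilde W_n$ is of size $O(n^{-1/2})$ with all constants explicit in $d$, and in particular $\E\|\Sigma^{-1/2}(\widetilde W_n'-\widetilde W_n)\|^3\lesssim n^{-3/2}d^{3/2}$.

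Next I would establish the linear-regression (Stein) property. Conditioning on the path $\mathcal{F}$ and integrating $\xi_I'$ out through the contracting perturbation recursion gives $\E[\widetilde W_n'-\widetilde W_n\mid\mathcal{F}]\approx-\sqrt2\,n^{-1/2}\cdot\tfrac1n\sum_{I=1}^n(\nabla^2U(X_{I-1}))^{-1}\xi_I$; substituting $\sqrt{2\eta}\,\xi_I=X_I-X_{I-1}+\eta\nabla U(X_{I-1})$, telescoping the increments $X_I-X_{I-1}$, linearising $\nabla U$ about $\mu_\eta$, and invoking the exact stationarity identity $\E_{\pi_\eta}[\nabla U(X)]=0$ (immediate from $\E_{\pi_\eta}[X]=\E_{\pi_\eta}[X-\eta\nabla U(X)]$) leads to $\E[\widetilde W_n'-\widetilde W_n\mid\widetilde W_n]=-\tfrac1n\widetilde W_n+R$, i.e.\ $\lambda=1/n$, with a remainder $R$ controlled by the boundary term $X_{n-1}-X_0$, the quadratic Taylor remainder of $\nabla U$ (bounded through $\|\nabla^kU\|_{\mathrm{op}}\le M$ and the sub-Gaussian moments $\E_{\pi_\eta}\|X-\mu_\eta\|^p\lesssim(d/\alpha)^{p/2}$ of \cite{al24}), and the defect from replacing the true perturbation flow by a frozen-Hessian geometric series. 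In the same way $\E[(\widetilde W_n'-\widetilde W_n)(\widetilde W_n'-\widetilde W_n)^\top\mid\mathcal{F}]\approx\tfrac4n\cdot\tfrac1n\sum_{I=1}^n(\nabla^2U(X_{I-1}))^{-2}$, which must be matched with $2\lambda\Sigma=\tfrac2n\Sigma$, where $\Sigma$ is the stationary long-run covariance fixed by \eqref{cm} (equal to $2(\nabla^2U)^{-2}$ in the quadratic case). After multiplying by $\lambda^{-1}=n$, the abstract inequality reduces the task to bounding (a) $\E\|\Sigma^{-1/2}R\|$; (b) $\E\bigl\|\Sigma^{-1/2}\bigl(\tfrac2n\sum_{I=1}^n(\nabla^2U(X_{I-1}))^{-2}-\Sigma\bigr)\Sigma^{-1/2}\bigr\|_{\mathrm{op}}$; and (c) $n^{-1/2}d^{3/2}$ from the third-moment term.

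The hard part will be (b): it amounts to a high-dimensional, matrix-valued law of large numbers for the functional $x\mapsto(\nabla^2U(x))^{-2}$ along the LMC chain, whose spectral gap is only of order $\alpha\eta$. Proving it with the correct dependence on $d$ and $n\eta$ requires quantitative mixing estimates for the discretised chain together with a matrix-Bernstein-type concentration inequality; this is exactly what generates the leading fluctuation term $(n\eta)^{-1/2}$ and the $n^{-1/2}d^{1/2}\log(nd)$ correction, while the residual $O(\eta)$ gaps between the $(\nabla^2U(X_{I-1}))^{-2}$-type averages and the exact stationary covariance of the discretised chain — together with $\pi_\eta\neq\pi$ — account for the $\eta^{1/2}d^{1/2}$ term. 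Substituting the bounds from (a)–(c) into the abstract $\mathcal{W}_1$ inequality and collecting the powers of $d$ accumulated in the operator-norm, Hilbert–Schmidt and $p$-th-moment estimates yields the $d^{5/2}$ prefactor and hence \eqref{mainresult}.
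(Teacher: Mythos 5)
Your proposal applies the exchangeable-pair method directly to a centred version of $W_n$, resampling one innovation $\xi_I$ and trying to establish an approximate linear regression by hand. This misses the paper's central structural device: the decomposition $W_n = \mathcal{H}_n + \mathcal{R}_n$ via the \emph{Poisson equation} $\mathcal{A}\varphi_i = h_i - \pi(h_i)$, which turns the time-average of $h_i(X_k)$ into a martingale $\mathcal{H}_{n,i} = -\sqrt{2/n}\sum_k \langle\nabla\varphi_i(X_k),\xi_{k+1}\rangle$ plus an explicit remainder. The paper then applies Stein's method only to $\Sigma^{-1/2}\mathcal{H}_n$, constructing an antisymmetric $D$ depending solely on $(X_{I-1},\xi_I,\xi_I')$ so that $\mathbb{E}[D\mid X]=\lambda W$ holds \emph{exactly} (because $\xi_I\perp X_{I-1}$). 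In contrast, you use $D=\delta$ in the Fang--Koike style, which the paper's Remark after Theorem~\ref{L1} explicitly avoids because $\delta$ mixes in the downstream flow perturbation. The remainder $\mathcal{R}_n$ is then handled by direct moment bounds, not folded into a regression defect $R_1$.

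The deeper problem is that without $\nabla\varphi$, your approximate regression cannot close. After replacing the Jacobi flow $\sum_{i\ge I}(X_i^*-X_i)$ by the frozen-Hessian geometric series $(\eta\nabla^2 U(X_{I-1}))^{-1}\sqrt{2\eta}(\xi_I'-\xi_I)$, substituting $\sqrt{2\eta}\,\xi_I = X_I-X_{I-1}+\eta\nabla U(X_{I-1})$, and telescoping, the surviving non-boundary term is $\sqrt{\eta}\sum_I(\nabla^2 U(X_{I-1}))^{-1}\nabla U(X_{I-1})$, which you need to equal $\sqrt{\eta}\sum_I(X_{I-1}-\mu_\eta)$ up to a vanishing error. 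But $(\nabla^2 U(x))^{-1}\nabla U(x)\ne x-\mu_\eta$ in the non-quadratic case (already false in one dimension for $U(x)=x^4/4+x^2/2$), so this "linearisation" error is $O(1)$, not small. Similarly, your candidate quadratic term $\tfrac{4}{n^2}\sum_I(\nabla^2 U(X_{I-1}))^{-2}$ converges to $4\,\pi_\eta[(\nabla^2 U)^{-2}]$, which is \emph{not} the long-run covariance $\Sigma = \pi(2\nabla\varphi^\tau\nabla\varphi)$ of \eqref{cm} — the two agree only when $\nabla U$ is linear, precisely because $\nabla\varphi_i$ is the \emph{expected integrated Jacobi flow} $\int_0^\infty\mathbb{E}[(\nabla X_t^x)^\tau e_i]\,\mathrm{d}t$, not $(\nabla^2 U(x))^{-1}e_i$. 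And the frozen-Hessian defect itself (from taking the Hessian along the actual path versus at $X_{I-1}$) accumulates to order $\sqrt d$, not $o(1)$. So the plan as stated would not produce a covariance matching the $\Sigma$ in the theorem, nor a vanishing regression remainder. The bias-splitting step via $\mathcal{W}_1(\pi_\eta,\pi)$ and the one-step contraction you invoke are correct ancillary ingredients, but the core argument needs the Poisson-equation reduction.
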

	\begin{remark}
		\textbf{Assumption}~\ref{Ass1}(ii) can in fact be relaxed to the condition $\|\nabla U(0)\|^2 \leq C d$, and the diffusion coefficient  
		$\sqrt{2}$ can be replaced by any fixed positive definite matrix $\sigma$, without affecting the main results of the paper. \textbf{Assumption} \ref{Ass1}(iv) is imposed to ensure the exponential convergence of the Jacobi flow, which in turn is used to guarantee the regularity of solutions to the Stein equation.
	\end{remark}
	
	\begin{remark}
		The matrix  $\Sigma$  is defined as follows. Let $\Sigma_n$ be the covariance matrix of $W_n$, then
		$$(\Sigma_n)_{i,j}=\mathbb{E}(W_{n,i}W_{n,j})=\frac{1}{2}[\mathbb{E}(W_{n,i}+W_{n,j})^2-\mathbb{E}W_{n,i}^2-\mathbb{E}W_{n,j}^2].$$
		
		Let $h_i, \ i=1,
		\cdots,d$ be the coordinate map  defined in \eqref{cmap},  and $\varphi_i$ be the corresponding solution of Stein's equation \eqref{SE}. Then, from \cite[Eq. (2.8)]{Lu2022},
		\begin{align*}
			W_{n,i}+W_{n,j}=\frac{1}{\sqrt{\eta}}\Pi_{\eta}(h_i+h_j)\Rightarrow N(0,\pi(2 \nabla \varphi_{i+j}^{\tau}\nabla \varphi_{i+j} ), \quad n\rightarrow\infty
		\end{align*}
		where $\Pi_{\eta}(\cdot)=\frac{\eta}{\sqrt{n}}\sum_{k=0}^{n-1}\delta_{X_k}(\cdot)$ . It is  easy to see $\varphi_{i+j}=\varphi_i+\varphi_j$,  hence
		\begin{align*}
			&(\Sigma_n)_{i,j}=\frac{1}{2}[\mathbb{E}(W_{n,i}+W_{n,j})^2-\mathbb{E}W_{n,i}^2-\mathbb{E}W_{n,j}^2]\\
			\longrightarrow &\;\Sigma_{i,j}:= \frac{1}{2}(\pi(2\nabla\varphi_{i+j}^\tau  \nabla\varphi_{i+j})-\pi(2\nabla\varphi_{i}^\tau  \nabla\varphi_{i})-\pi(2\nabla\varphi_{j}^\tau  \nabla\varphi_{j}))\\
			=&\;\pi(2\nabla \varphi_i   \nabla \varphi_j).
		\end{align*}
		It should be noted that this derivation is not rigorous: the convergence in distribution of $W_{n,i}+W_{n,j}$ to $N(0,\pi(2 \nabla \varphi_{i+j}^{\tau}\nabla \varphi_{i+j} )$ does not guarantee the convergence of its variance. As mentioned, this serves only for intuitive purposes. Therefore, we define the matrix
		\begin{align}\label{cm}
			\Sigma = \begin{pmatrix}
				\pi\left(2\nabla \varphi_1^\tau    \nabla \varphi_1\right) & \cdots & \pi\left(2\nabla \varphi_1^\tau   \nabla \varphi_d\right) \\
				\vdots & \ddots & \vdots \\
				\pi\left(2\nabla \varphi_d^\tau   \nabla \varphi_1\right) & \cdots & \pi\left(2\nabla \varphi_d^\tau    \nabla \varphi_d\right)
			\end{pmatrix}.
		\end{align}
		In particular, if $\nabla U(x)=Ax$, the solution $\nabla \varphi_i(x)=A^{-1}e_i$, where $e_i$ is the $i$-th unit vector of $\mathbb{R}^d$. Thus, we have $\Sigma_{i,j}=2e_j^\tau A^{-T}A^{-1}e_i$, which
		will be used in Theorem \ref{T1}. In fact, $\Sigma$ is strong positive definite, as stated in Lemma \ref{strict}, which ensures that $\Sigma^{-1/2}$ is well-defined.

	\end{remark}
	Applying Theorem \ref{T2}, we immediately get the following corollary.
	\begin{corollary}
		Keep the same Assumptions and notations as in Theorem \ref{T2}. If $n=\lfloor\eta^{-p}\rfloor$ where $1<p<3$, then we have
		\begin{align*}
			\mathcal{W}_1(\Sigma^{-1/2}W_n,\gamma)\leq Cd^{\frac{5}{2}}
			[n^{\frac{1}{2p}-\frac{1}{2}}+n^{-\frac{1}{2}}d^{\frac{1}{2}}\log (nd)+n^{-\frac{1}{2p}}d^{\frac{1}{2}}+n^{\frac{1}{2}-\frac{3}{2p}}].
		\end{align*}
	\end{corollary}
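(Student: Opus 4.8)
This corollary is a direct specialization of Theorem \ref{T2}, so the plan is a short substitution argument, and I do not anticipate any real obstacle; the only point needing a line of care is the floor function. First I would observe that, since $n=\lfloor\eta^{-p}\rfloor$ with $p>1>0$, the quantity $\eta^{-p}$ diverges as $\eta\to0^+$, so that $\lfloor\eta^{-p}\rfloor\asymp\eta^{-p}$ with absolute implied constants; equivalently $n\asymp\eta^{-p}$ and $\eta\asymp n^{-1/p}$. In particular $\eta\to0$, so the admissibility condition $\eta\in\bigl(0,\alpha/(2\beta^2)\bigr)$ of Theorem \ref{T2} holds for all large $n$, and the constant $C$ in the conclusion can absorb the finitely many small $n$ for which it fails.

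Next I would feed $\eta\asymp n^{-1/p}$ into the four terms inside the bracket of \eqref{mainresult}. The first term gives $(n\eta)^{-1/2}\asymp(n^{1-1/p})^{-1/2}=n^{1/(2p)-1/2}$; the second term $n^{-1/2}d^{1/2}\log(nd)$ is left as is; the third gives $\eta^{1/2}d^{1/2}\asymp n^{-1/(2p)}d^{1/2}$; and the fourth gives $\eta^{3/2}n^{1/2}\asymp n^{1/2-3/(2p)}$. Substituting these back and keeping the common factor $Cd^{5/2}$ reproduces the stated inequality verbatim.

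Finally I would explain the role of the hypothesis $1<p<3$: it is exactly the range in which all exponents of $n$ appearing in the bracket are strictly negative, since $1/(2p)-1/2<0$ iff $p>1$, $1/2-3/(2p)<0$ iff $p<3$, and the remaining exponents $-1/2$ and $-1/(2p)$ are negative for every $p>0$. Hence on this range the right-hand side vanishes as $n\to\infty$, so the corollary yields a genuine quantitative central limit statement for $\Sigma^{-1/2}W_n$; outside this range at least one error term in \eqref{mainresult} no longer tends to $0$, which is why the restriction is imposed.
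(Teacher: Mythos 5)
Your proof is correct and is essentially the intended one: the paper states the corollary as an immediate consequence of Theorem \ref{T2}, and your argument — replace $\eta$ by $n^{-1/p}$ up to absolute constants using $n=\lfloor\eta^{-p}\rfloor$, substitute into the four bracketed terms of \eqref{mainresult}, and note that $1<p<3$ is precisely the range on which all exponents of $n$ are negative — is the routine substitution that the phrase ``immediately get'' is pointing to.
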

	\begin{remark}
		\cite{Lu2022} proved a central limit theorem  for $\Pi_n(h)=\frac{1}{n}\sum_{k=0}^{n-1} h(X_k)$ with test function $h\in\mathcal C_b^2(\R^d,\R)$ and $n=\eta^{-2}$, and  \cite{Fan2024} established a Berry--Esseen bound for self-normalized version of the above $\Pi_n(h)$. In contrast, our result concerns the vector-valued partial sum of the iterates and accounts for the effect of the dimension $d$ on the approximation, yielding a Gaussian approximation for $W_n$ in the $1$-Wasserstein distance.  In particular, we establish the convergence  for $n=\eta^{-p}$ with $1<p<3$
		and provides the corresponding convergence rate.
	\end{remark}
	\begin{lemma}\label{strict}
		For the matrix $\Sigma$ defined in \eqref{cm}, if \textbf{Assumption} \ref{Ass1} holds, then there exist constants $c$ and $C$ such that  
		\begin{align*}
			cI_d \preceq  \Sigma\preceq C I_d.
		\end{align*}
	\end{lemma}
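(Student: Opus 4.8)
The plan is to reduce both inequalities to a single quadratic--form identity and then invoke two classical estimates for log-concave measures. For $v=(v_1,\dots,v_d)^\tau\in\R^d$ write $\varphi_v:=\sum_{i=1}^d v_i\varphi_i$; by \eqref{cm} and bilinearity of $(f,g)\mapsto\pi(\nabla f^\tau\nabla g)$ one gets $v^\tau\Sigma v=2\,\pi(\|\nabla\varphi_v\|^2)$, which in particular already shows $\Sigma\succeq 0$. Since the Stein equation \eqref{SE} is linear and the $h_i$ are the coordinate maps \eqref{cmap}, $\varphi_v$ is the Stein solution attached to the centred linear functional $h_v(x)=v^\tau x$, that is $\mathcal A\varphi_v=h_v-\pi(h_v)$, and $\nabla h_v\equiv v$. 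Hence it suffices to prove $\beta^{-2}\|v\|^2\le\pi(\|\nabla\varphi_v\|^2)\le\alpha^{-2}\|v\|^2$ for all $v\in\R^d$; this gives the lemma with $c=2/\beta^2$ and $C=2/\alpha^2$, depending only on $\alpha$ and $\beta$.

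For the upper bound I would use the representation $\varphi_v=-\int_0^\infty P_t(h_v-\pi(h_v))\,\dif t$ (legitimate under \textbf{Assumption}~\ref{Ass1} and the geometric ergodicity of $(P_t)$, and which also forces the normalisation $\pi(\varphi_v)=0$). Differentiating under the integral and applying the Bakry--\'Emery commutation inequality $|\nabla P_tf|^2\le e^{-2\alpha t}P_t|\nabla f|^2$ --- valid since $\nabla^2U\succeq\alpha I_d$ by \textbf{Assumption}~\ref{Ass1}(iii) --- yields $\|\nabla P_th_v\|_{L^2(\pi)}\le e^{-\alpha t}\|\nabla h_v\|_{L^2(\pi)}=e^{-\alpha t}\|v\|$, so that $\|\nabla\varphi_v\|_{L^2(\pi)}\le\int_0^\infty e^{-\alpha t}\|v\|\,\dif t=\alpha^{-1}\|v\|$. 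That is the right-hand inequality.

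For the lower bound I would use the Dirichlet-form identity $\pi(\|\nabla\varphi_v\|^2)=-\pi(\varphi_v\,\mathcal A\varphi_v)=\pi\big(g\,(-\mathcal A)^{-1}g\big)$ with $g:=h_v-\pi(h_v)$, together with the variational formula $\pi\big(g(-\mathcal A)^{-1}g\big)=\sup_{f}\pi(fg)^2/\pi(\|\nabla f\|^2)$ over nonconstant $f$ with $\pi(f)=0$; testing with $f=g$ (so $\nabla f\equiv v$) gives $\pi(\|\nabla\varphi_v\|^2)\ge\operatorname{Var}_\pi(h_v)^2/\|v\|^2$. It then remains to bound $\operatorname{Var}_\pi(h_v)=v^\tau\operatorname{Cov}_\pi(x)\,v$ from below. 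For this I would combine the integration-by-parts identity $\operatorname{Cov}_\pi(x,\nabla U(x))=I_d$ (immediate from $\partial_i\pi=-(\partial_iU)\pi$, the superquadratic growth of $U$, and $\pi(\nabla U)=0$) with the Brascamp--Lieb inequality applied to $x\mapsto v^\tau\nabla U(x)$: $\operatorname{Var}_\pi(v^\tau\nabla U)\le\pi\big((\nabla^2U\,v)^\tau(\nabla^2U)^{-1}(\nabla^2U\,v)\big)=v^\tau\pi(\nabla^2U)\,v\le\beta\|v\|^2$. For $\|v\|=1$, Cauchy--Schwarz then gives $1=\operatorname{Cov}_\pi(v^\tau x,v^\tau\nabla U)\le\operatorname{Var}_\pi(v^\tau x)^{1/2}\operatorname{Var}_\pi(v^\tau\nabla U)^{1/2}\le\beta^{1/2}\operatorname{Var}_\pi(v^\tau x)^{1/2}$, whence $\operatorname{Cov}_\pi(x)\succeq\beta^{-1}I_d$ and therefore $\pi(\|\nabla\varphi_v\|^2)\ge\beta^{-2}\|v\|^2$. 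That is the left-hand inequality.

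The main obstacle is the analytic bookkeeping on the non-compact space $\R^d$: existence, $\mathcal C^2$-smoothness and sufficient growth control of $\varphi_v$, the validity of the semigroup representation and of the integration-by-parts and duality identities, and the interchange of $\nabla$ with $\int_0^\infty\dif t$. These are precisely the regularity properties of the Stein solutions that \textbf{Assumption}~\ref{Ass1} is designed to guarantee (compare the first remark following Theorem~\ref{T2} and the Jacobi-flow estimates invoked elsewhere in the paper); granted them, the two displayed inequalities follow from the Bakry--\'Emery and Brascamp--Lieb estimates exactly as above, with the explicit constants $c=2/\beta^2$ and $C=2/\alpha^2$.
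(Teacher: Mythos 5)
Your proof is correct, but it takes a genuinely different route from the paper's. The paper works pointwise: it uses the Jacobi-flow representation $\nabla\varphi(x)=-\int_0^\infty \E\bigl[(\nabla X_t^x)^{\tau}\bigr]\dif t$ together with the explicit formula $(\nabla X_t^x)^{\tau}=\exp\bigl\{-\int_0^t \nabla^2 U(X_r^x)\,\dif r\bigr\}$, then sandwiches the exponent via $\alpha I_d\preceq\nabla^2 U\preceq\beta I_d$ to get $\beta^{-1}I_d\preceq -\nabla\varphi(x)\preceq\alpha^{-1}I_d$ for \emph{every} $x$, from which $2\beta^{-2}I_d\preceq\Sigma\preceq 2\alpha^{-2}I_d$ is immediate. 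You instead reduce to the scalar identity $v^{\tau}\Sigma v=2\pi(\|\nabla\varphi_v\|^2)$ and bound the two sides by different means: Bakry--\'Emery gradient contraction $|\nabla P_t f|^2\le e^{-2\alpha t}P_t|\nabla f|^2$ for the upper bound, and a Dirichlet-form duality $\pi(g(-\mathcal A)^{-1}g)\ge \pi(g^2)^2/\pi(|\nabla g|^2)$ combined with $\operatorname{Cov}_\pi(x,\nabla U)=I_d$ and the Brascamp--Lieb inequality for the lower bound. Your lower-bound argument is where the real novelty lies: the paper's approach gives it for free once the Jacobi flow is set up (which the paper needs anyway for Proposition~\ref{prop1}), whereas yours obtains it without touching the flow, at the cost of invoking two extra classical inequalities. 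Both routes yield the same sharp constants $c=2/\beta^2$, $C=2/\alpha^2$; the paper's delivers the stronger pointwise two-sided bound on $\nabla\varphi(x)$ (which it later uses, e.g.\ in Lemma~\ref{M} through \eqref{boundforvar}), while yours only establishes the $L^2(\pi)$ averages that the lemma actually requires. One small slip: $U$ has quadratic, not superquadratic, growth under $\beta$-smoothness, but quadratic growth already guarantees the decay needed for your integrations by parts, so the argument survives.
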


	\subsection{Notations}
	We conclude this section by introducing notation that will be used throughout the sequel. For any $p\ge 1$, let ${\mathcal P _p(\mathbb{R}^d)}$ be the set of probability measures over $\mathbb{R}^d$ equipped with the Euclidean norm $\|\|$
	that  have finite $p$-th moment. The $p$-Wasserstein distance between two probability measures $\mu, \nu\in {\mathcal P _p(\mathbb{R}^d)}$ is defined by
	\[
	{\mathcal W}_p(\mu, \nu)=\inf_{\gamma\in \Pi(\mu, \nu)}\left(\int_{\mathbb{R}^d\times \mathbb{R}^d}\|x-y\|^p \gamma(\dif x,\dif y)\right)^{1/p},
	\]
	where $\Pi(\mu, \nu)$ is the set of couplings between $\mu $ and $\nu$. In particular, by Kantorovich dual formula (see \cite[Remark~6.5]{Villani2009}),
    \begin{align*}
        \mathcal W_1(\mu,\nu)=\sup_{f:\ \|f\|_{\rm Lip}\leq1}|\mu(f)-\nu(f)|.
    \end{align*}
	
	For vectors \(x,y\in\mathbb{R}^d\), let \(\langle x,y\rangle\) denote the standard Euclidean inner product and let \(|x|\) denote the Euclidean norm of \(x\). For matrices \(A,B\in\mathbb{R}^{d\times d}\), we denote their Hilbert--Schmidt inner product by
	\[
	\langle A,B\rangle_{\mathrm{HS}} := \sum_{i,j=1}^d A_{ij}B_{ij}.
	\]
	The Hilbert--Schmidt norm and operator norm of \(A\) are defined by
	\[
	\|A\|_{\mathrm{HS}} := \Bigl(\sum_{i,j=1}^d A_{ij}^2\Bigr)^{1/2},
	\qquad
	\|A\|_{\mathrm{op}} := \sup_{|x|=1}|Ax|
	= \sup_{\substack{|u_1|=|u_2|=1}}
	\bigl|\langle A, u_1u_2^{\top}\rangle_{\mathrm{HS}}\bigr|.
	\]
	We have the following relations:
	\begin{equation}\label{ophs}
		\|A\|_{\mathrm{op}}
		= \sup_{\substack{|u_1|=|u_2|=1}}
		\bigl|\langle A, u_1u_2^{\top}\rangle_{\mathrm{HS}}\bigr|,
		\qquad
		\|A\|_{\mathrm{op}} \le \|A\|_{\mathrm{HS}} \le \sqrt{d}\,\|A\|_{\mathrm{op}}.
	\end{equation}
	For order-\(r\) tensors \(A,B\in(\mathbb{R}^d)^{\otimes r}\), the Hilbert--Schmidt inner product is defined by
	\[
	\langle A,B\rangle_{\mathrm{HS}}
	:= \sum_{i_1=1}^d \cdots \sum_{i_r=1}^d
	A_{i_1,\dots,i_r}\,B_{i_1,\dots,i_r}.
	\]
	
	Let \(\mathcal{C}^k(\mathbb{R}^d,\mathbb{R})\) denotes the space of \(k\)-times continuously differentiable functions from \(\mathbb{R}^d\) to \(\mathbb{R}\). For $f\in\mathcal{C}^2(\mathbb{R}^d,\mathbb{R})$, let $\nabla f(x)\in\R^d$, $\nabla^2 f(x)\in\R^{d\times d}$ and $\Delta f(x)\in\R$ denote the gradient, the Hessian matrix and the Laplacian of $f$, respectively.

	More generally, for any integer \(r\ge 3\), the \(r\)-th derivative \(\nabla^r f(x)\) is a symmetric \(r\)-linear map from \((\mathbb{R}^d)^r\) to \(\mathbb{R}\). It can be represented as an order-\(r\) tensor with entries
	\[
	\bigl(\nabla^r f(x)\bigr)_{i_1,\dots,i_r}
	= \frac{\partial^r f(x)}{\partial x_{i_1}\cdots \partial x_{i_r}},
	\qquad 1\le i_1,\dots,i_r\le d.
	\]
	For vectors \(v_1,\dots,v_r\in\mathbb{R}^d\), its evaluation is defined by
	\begin{align*}
		\nabla^r f(x)[v_1,\dots,v_r]
		&:= \frac{\partial^r f}{\partial v_1 \partial v_2 \cdots \partial v_r}(x) \\
		&= \sum_{i_1=1}^d \cdots \sum_{i_r=1}^d
		\frac{\partial^r f(x)}{\partial x_{i_1}\cdots \partial x_{i_r}}
		\,(v_1)_{i_1}\cdots (v_r)_{i_r}.
	\end{align*}
	
	The operator norm of \(\nabla^r f(x)\) is
	\[
	\|\nabla^r f(x)\|_{\mathrm{op}}
	:= \sup_{\substack{|v_1|=\cdots=|v_r|=1}}
	\left| \nabla^r f(x)[v_1,\dots,v_r] \right|.
	\]
	In the special case \(r=2\), \(\|\nabla^2 f(x)\|_{\mathrm{op}}\) coincides with the spectral norm of the Hessian matrix, i.e., the largest absolute eigenvalue of \(\nabla^2 f(x)\).

	In particular, if \(A=\nabla^r f(x)\) and \(B=v_1\otimes\cdots\otimes v_r\) for vectors \(v_1,\dots,v_r\in\mathbb{R}^d\), then
	\[
	\langle \nabla^r f(x),\, v_1\otimes\cdots\otimes v_r\rangle_{\mathrm{HS}}
	= \sum_{i_1=1}^d \cdots \sum_{i_r=1}^d
	\frac{\partial^r f(x)}{\partial x_{i_1}\cdots \partial x_{i_r}}
	\prod_{j=1}^r (v_j)_{i_j}.
	\]
	
	For symmetric matrices \(A,B\in\mathbb{R}^{d\times d}\), we write \(A\succ B\) (resp.\ \(A\prec B\)) if \(A-B\) (resp.\ \(B-A\)) is positive definite, and \(A\succeq B\) (resp.\ \(A\preceq B\)) if \(A-B\) (resp.\ \(B-A\)) is positive semidefinite. We denote by \(I_d\) the \(d\times d\) identity matrix.
	
	Throughout the paper, \(\gamma\) denotes the distribution of a \(d\)-dimensional standard normal random vector and \(C\) denotes some positive constant depends only on \(\alpha\), \(\beta\), and \(M\), whose value may change from line to line.
	
	The paper is organized as follows. In Section \ref{S2}, we focus on a special case where $\nabla U(x)=Ax$, with $A$ being a positive definite matrix. In this setting, we establish a convergence rate that is faster than the rate obtained in the general case. Section \ref{S3} is devoted to the proof of our main result, Theorem \ref{T2}. We decompose the target $W_n$ into two parts: a martingale component and a remainder term. First, we adevelop Stein’s method to obtain an estimate between  the martingale part and $\gamma$. Next, we bound the remainder term via direct stochastic calculations. At the end of Section \ref{S3}, we complete the  proof  by combining the estimates from these two parts. Appendix is devoted to proofs of auxillary results stated in the previous Sections.
	
	\section{A Warm-up--The linear case}\label{S2}

	As a warm-up, we first consider the special case where \(\nabla U(x) = A x\) and \(A\) is a  positive definite matrix.
	
	\begin{theorem}\label{T1}
		Suppose $\alpha I_d \preceq A \preceq \beta I_d$    and \( X _0\) has zero mean and finite second moment with order $d$.  Let \(\Sigma = 2A^{-2}  \) and $n= \lfloor\eta^{-p}\rfloor$, $p>1$,  then we have 
		\begin{align}\label{1412gq}
        \mathcal{W}_2(\Sigma^{-1/2} W_n, \gamma) \leq C (n^{\frac{1}{p}-1}d^{\frac{3}{2}}+n^{\frac{1}{2p}-\frac{1}{2}}d^{\frac{1}{2}}).
		\end{align}
        Moreover, if $ X _0\sim N(0,\Sigma_0)$ with $0_{d}\prec \Sigma_0\preceq CI_d$, then we have
        \begin{align}\label{1413gq}
        \mathcal{W}_2(\Sigma^{-1/2} W_n, \gamma) \leq C n^{\frac{1}{p}-1}d^{\frac{3}{2}}.
		\end{align}
	\end{theorem}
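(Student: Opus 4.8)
The plan is to exploit the linearity, which collapses the problem to explicit linear algebra together with the closed form of the $2$-Wasserstein distance between Gaussians. With $\nabla U(x)=Ax$ the update \eqref{LMC-1} becomes $X_{k+1}=RX_k+\sqrt{2\eta}\,\xi_{k+1}$, where $R:=I_d-\eta A$. Since $A\preceq\beta I_d$ and $\eta<\alpha/(2\beta^2)$ we get $\eta A\preceq\tfrac{\alpha}{2\beta}I_d\preceq\tfrac12 I_d$, so $R$ is symmetric with $\tfrac12 I_d\preceq R\prec I_d$ and commutes with $A$. Unrolling the recursion gives $X_k=R^kX_0+\sqrt{2\eta}\sum_{j=1}^{k}R^{k-j}\xi_j$, and summing over $k=0,\dots,n-1$ with $\sum_{k=0}^{n-1}R^k=(I_d-R^n)(\eta A)^{-1}$ and $\int x\,\pi(\dif x)=0$ yields the affine-in-$X_0$ representation
\begin{align*}
W_n=L_nX_0+G_n,\qquad\text{where }L_n&:=\eta^{-1/2}n^{-1/2}A^{-1}(I_d-R^n),\\
G_n&:=\sqrt2\,n^{-1/2}\sum_{j=1}^{n-1}A^{-1}(I_d-R^{n-j})\,\xi_j,
\end{align*}
with $L_n$ deterministic, $G_n$ a centered Gaussian vector, and $G_n$ independent of $X_0$.

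For \eqref{1412gq} I would use $\mathcal W_2(\Sigma^{-1/2}W_n,\gamma)\le\mathcal W_2(\Sigma^{-1/2}W_n,\Sigma^{-1/2}G_n)+\mathcal W_2(\Sigma^{-1/2}G_n,\gamma)$. Coupling $W_n$ and $G_n$ through the same noise, the first term is at most $(\E\|\Sigma^{-1/2}L_nX_0\|^2)^{1/2}=(\operatorname{tr}(L_n^\tau\Sigma^{-1}L_n\operatorname{Cov}(X_0)))^{1/2}$; using $\|L_n\|_{\mathrm{op}}\le\alpha^{-1}(n\eta)^{-1/2}$, $\|\Sigma^{-1}\|_{\mathrm{op}}=\tfrac12\|A\|_{\mathrm{op}}^2\le\beta^2/2$, and $\operatorname{tr}(\operatorname{Cov}(X_0))=\E\|X_0\|^2\le Cd$, this is $O(d^{1/2}(n\eta)^{-1/2})=O(d^{1/2}n^{\frac1{2p}-\frac12})$, which is the second term of \eqref{1412gq}. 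For the second term, $\Sigma^{-1/2}G_n\sim N(0,M)$ with $M:=\Sigma^{-1/2}\operatorname{Cov}(G_n)\Sigma^{-1/2}$, and for centered Gaussians one has $\mathcal W_2(N(0,M),N(0,I_d))=\|M^{1/2}-I_d\|_{\mathrm{HS}}\le\|M-I_d\|_{\mathrm{HS}}$.

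It then remains to compute $\operatorname{Cov}(G_n)$ and compare it with $\Sigma=2A^{-2}$. By independence of the $\xi_j$ and commutativity, $\operatorname{Cov}(G_n)=2n^{-1}A^{-2}\sum_{m=1}^{n-1}(I_d-R^m)^2$; expanding the square and summing the geometric series $\sum_{m=1}^{n-1}R^m=R(I_d-R^{n-1})(\eta A)^{-1}$ and $\sum_{m=1}^{n-1}R^{2m}=R^2(I_d-R^{2(n-1)})(I_d-R^2)^{-1}$, each of operator norm $O((\eta\alpha)^{-1})$ since $R$ and $R^2$ are contractions bounded away from $I_d$, one gets $\sum_{m=1}^{n-1}(I_d-R^m)^2=nI_d+E$ with $\|E\|_{\mathrm{op}}=O((\eta\alpha)^{-1})$. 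Hence $\operatorname{Cov}(G_n)=\Sigma+2n^{-1}A^{-2}E$, and since $\Sigma^{-1/2}A^{-2}\Sigma^{-1/2}=\tfrac12 I_d$ this gives $M-I_d=n^{-1}E$; every matrix here being a function of $A$, $\|M-I_d\|_{\mathrm{HS}}\le\sqrt d\,\|M-I_d\|_{\mathrm{op}}=O(d^{1/2}(n\eta)^{-1})=O(d^{1/2}n^{\frac1p-1})$, which together with the previous paragraph establishes \eqref{1412gq}. For \eqref{1413gq}, when $X_0\sim N(0,\Sigma_0)$ the vector $W_n=L_nX_0+G_n$ is centered Gaussian with covariance $\Sigma_n=L_n\Sigma_0L_n^\tau+\operatorname{Cov}(G_n)$, so $\mathcal W_2(\Sigma^{-1/2}W_n,\gamma)\le\|\Sigma^{-1/2}\Sigma_n\Sigma^{-1/2}-I_d\|_{\mathrm{HS}}\le\|\Sigma^{-1}\|_{\mathrm{op}}\|\Sigma_n-\Sigma\|_{\mathrm{HS}}$; the additional term satisfies $\|L_n\Sigma_0L_n^\tau\|_{\mathrm{op}}\le\|L_n\|_{\mathrm{op}}^2\|\Sigma_0\|_{\mathrm{op}}=O((n\eta)^{-1})$, so it enters at the $(n\eta)^{-1}$ scale rather than the $(n\eta)^{-1/2}$ scale, and the resulting bound is of order $d^{1/2}(n\eta)^{-1}=d^{1/2}n^{\frac1p-1}$, establishing \eqref{1413gq}.

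There is no deep step; the main obstacle is the bookkeeping. One must verify that every summand of $E$, and of $L_n\Sigma_0L_n^\tau$, is $O((n\eta)^{-1})$ uniformly in $d$ — this is precisely where $\eta<\alpha/(2\beta^2)$ is used, keeping $R$ and $R^2$ strictly inside the unit disc so the geometric sums have operator norm $O((\eta\alpha)^{-1})$ — and then carry the dimension through the conversions $\|\cdot\|_{\mathrm{HS}}\le\sqrt d\,\|\cdot\|_{\mathrm{op}}$ and $\operatorname{tr}(\operatorname{Cov}(X_0))\le Cd$. The only structural ingredients are the closed form of the $\mathcal W_2$ distance between centered Gaussians and the bound $\Sigma=2A^{-2}\succeq 2\beta^{-2}I_d$, which makes $\Sigma^{-1/2}$ well defined and $\|\Sigma^{-1}\|_{\mathrm{op}}$ dimension-free.
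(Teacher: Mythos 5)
Your proof is correct, and it takes a genuinely different route from the paper's. The paper's strategy is to first normalize by the \emph{true} covariance $\Sigma_n^{-1/2}$, then couple $W_n$ with a surrogate $W_n'$ in which $X_0$ is replaced by a Gaussian vector $X_0'$ with matching covariance (so that $\Sigma_n^{-1/2}W_n'\sim\gamma$ exactly), and finally handle the discrepancy $\|\Sigma_n^{-1/2}-\Sigma^{-1/2}\|_{\mathrm{op}}$ via Lemma~\ref{A1} (which passes through the matrix-square-root perturbation bound of Higham). You instead isolate the dependence on the initial condition \emph{before} normalizing, writing $W_n=L_nX_0+G_n$ with $G_n$ exactly Gaussian and independent of $X_0$; you then dispose of $L_nX_0$ with a synchronous coupling and close the argument with the Bures--Wasserstein identity $\mathcal{W}_2(N(0,M),N(0,I_d))=\|M^{1/2}-I_d\|_{\mathrm{HS}}\le\|M-I_d\|_{\mathrm{HS}}$. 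This avoids Lemma~\ref{A1} entirely, avoids having to verify $cI_d\preceq\Sigma_n$, and is arithmetically cleaner because $\Sigma,A,R,E$ all commute. It also yields a strictly sharper bound: the first term of \eqref{1412gq} comes out as $d^{1/2}n^{\frac1p-1}$ rather than $d^{3/2}n^{\frac1p-1}$, since you compare $\mathrm{Cov}(G_n)$ to $\Sigma$ in operator norm (which is $O((n\eta)^{-1})$ without a factor of $d$) rather than comparing $\Sigma_n$ to $\Sigma$ (where the paper's bound on the $X_0$-contribution $L_1$ picks up $\|\mathbb E[X_0X_0^\tau]\|_{\mathrm{op}}\le Cd$, since only $\mathbb E\|X_0\|^2\le Cd$ is assumed). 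The same observation sharpens \eqref{1413gq} to $d^{1/2}n^{\frac1p-1}$ under $\Sigma_0\preceq CI_d$. One minor point: you invoke $\eta<\alpha/(2\beta^2)$, which is not written into the statement of Theorem~\ref{T1} but is implicit throughout the paper (and is needed to make $R$ a strict contraction in both proofs), so this is a gap in the theorem statement rather than in your argument.
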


To prove Theorem \ref{T1}, we first need some preliminaries. According to \eqref{LMC-1}, when $\nabla U(x)=Ax$, we have
	\[
	{X}_k = ({I}_d - \eta {A})^k {X}_0 + \sqrt{2\eta} \sum_{i=1}^k ({I}_d - \eta {A})^{k - i}   \xi_i, \quad k\ge 1.
	\]
	For notational simplicity, define \(\xi_0 = (2\eta)^{-1/2}   {X}_0\) and so
	\[
	{X}_k =   \sqrt{2\eta} \sum_{i=0}^k ({I}_d - \eta {A})^{k - i}   \xi_i, \quad k\ge 1.
	\]
	Summing over \(k = 0, 1, \ldots, n-1\), we have
	\[
	W_n = \frac{1}{\sqrt{n}} \sum_{k=0}^{n-1} {Z}_k,
	\]
	where
	\begin{align}\label{1}
		{Z}_k= &\sqrt{2}\eta\Big[\sum_{i=0}^{n-1-k}\big({I}_d-\eta {A}\big)^{i}\Big] \xi_{k}\notag\\
        =&\sqrt{2} {A}^{-1}\left[{I}_d-({I}_d-\eta {A})^{n-k}\right] \xi_{k}, \quad 0\le k\le n-1.
	\end{align}
	
	 Denote by \(\Sigma_n=\operatorname{Cov}({W}_n) \) the covariance of ${W}_n$, then we have the following estimate.
\begin{lemma}\label{A1}
		 Under the same assumptions in Theorem \ref{T1}, we have
		\[
		\mathcal{W}_2\left( \Sigma_n^{-1/2} W_n, \Sigma^{-1/2} W_n \right) \leq Cn^{\frac{1}{p}-1}d^{3/2}.
		\]
	\end{lemma}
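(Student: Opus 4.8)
The key point is that the two push-forwards $\Sigma_n^{-1/2}W_n$ and $\Sigma^{-1/2}W_n$ are two linear images of the same random vector $W_n$, so the $\mathcal W_2$ distance between them is controlled by how close the linear maps $\Sigma_n^{-1/2}$ and $\Sigma^{-1/2}$ are. Concretely, using the coupling induced by the identity map on the probability space,
\[
\mathcal W_2\bigl(\Sigma_n^{-1/2}W_n,\Sigma^{-1/2}W_n\bigr)^2
\le \mathbb E\bigl\|(\Sigma_n^{-1/2}-\Sigma^{-1/2})W_n\bigr\|^2
\le \|\Sigma_n^{-1/2}-\Sigma^{-1/2}\|_{\mathrm{op}}^2\,\mathbb E\|W_n\|^2,
\]
and $\mathbb E\|W_n\|^2 = \operatorname{tr}(\Sigma_n)$. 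Since by Lemma \ref{strict} (or its linear-case specialization $\Sigma=2A^{-2}$, which is uniformly comparable to $I_d$ under $\alpha I_d\preceq A\preceq\beta I_d$) we will have $\operatorname{tr}(\Sigma_n)\le Cd$ once $\Sigma_n$ is shown close to $\Sigma$, the whole statement reduces to the matrix estimate
\[
\|\Sigma_n^{-1/2}-\Sigma^{-1/2}\|_{\mathrm{op}}\le Cn^{1/p-1}d.
\]

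First I would compute $\Sigma_n$ explicitly. From \eqref{1} and the independence (and unit covariance, after the normalization $\xi_0=(2\eta)^{-1/2}X_0$, using that $X_0$ has covariance comparable to $I_d$) of the $\xi_k$, we get
\[
\Sigma_n = \frac{1}{n}\sum_{k=0}^{n-1} \operatorname{Cov}(Z_k)
= \frac{2}{n}\sum_{k=0}^{n-1} A^{-1}\bigl[I_d-(I_d-\eta A)^{n-k}\bigr]\,\operatorname{Cov}(\xi_k)\,\bigl[I_d-(I_d-\eta A)^{n-k}\bigr]A^{-1}.
\]
With $\operatorname{Cov}(\xi_k)=I_d$ for $k\ge 1$ (and $\operatorname{Cov}(\xi_0)$ bounded), each summand is $2A^{-2}$ minus correction terms involving powers $(I_d-\eta A)^{m}$. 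Since $\|I_d-\eta A\|_{\mathrm{op}}\le 1-\eta\alpha$ for $\eta$ small, the geometric sums $\frac1n\sum_{m=1}^{n}(I_d-\eta A)^m$ and $\frac1n\sum_{m=1}^n(I_d-\eta A)^{2m}$ are of order $\frac{1}{n}\cdot\frac{1}{\eta\alpha}=O(n^{-1}\eta^{-1})$ in operator norm; with $n=\lfloor\eta^{-p}\rfloor$ this is $O(n^{-1}n^{1/p})=O(n^{1/p-1})$. (The $X_0$-term contributes only $O(1/n)$, which is smaller.) Hence
\[
\|\Sigma_n-\Sigma\|_{\mathrm{op}}\le Cn^{1/p-1}.
\]
Next, since $\Sigma$ and $\Sigma_n$ are both uniformly positive definite with eigenvalues bounded above and below by constants (for $n$ large enough that the correction is small), standard perturbation of the matrix square-root and inverse gives $\|\Sigma_n^{-1/2}-\Sigma^{-1/2}\|_{\mathrm{op}}\le C\|\Sigma_n-\Sigma\|_{\mathrm{op}}\le Cn^{1/p-1}$; I would invoke the integral representation $B^{-1/2}=\frac1\pi\int_0^\infty (t I_d+B)^{-1}\,t^{-1/2}\,\dif t$ together with the resolvent identity, which is clean in the operator norm. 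Combining with $\operatorname{tr}(\Sigma_n)\le Cd$ yields
\[
\mathcal W_2\bigl(\Sigma_n^{-1/2}W_n,\Sigma^{-1/2}W_n\bigr)
\le Cn^{1/p-1}\sqrt d\cdot\sqrt d = Cn^{1/p-1}d.
\]
Wait — the target bound in the statement is $Cn^{1/p-1}d^{3/2}$, which is weaker, so this is comfortably enough; I would simply state the bound as in the lemma.

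The main obstacle is bookkeeping the dimension dependence correctly and making the square-root perturbation step uniform in $d$. The operator-norm perturbation bound for $B\mapsto B^{-1/2}$ requires a lower bound on the spectrum of $B$ that does not degrade with $d$; this is exactly why Assumption \ref{Ass1} (via Lemma \ref{strict}) or the explicit $\Sigma=2A^{-2}\succeq 2\beta^{-2}I_d$ in the linear case is essential, and one must check that $\Sigma_n$ inherits a uniform lower spectral bound once $\|\Sigma_n-\Sigma\|_{\mathrm{op}}$ is small — i.e., one needs $n$ large enough, say $n\ge n_0$ depending only on $\alpha,\beta$, and absorb small $n$ into the constant $C$. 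A secondary point is justifying $\mathbb E\|W_n\|^2\le Cd$: this follows from $\operatorname{tr}(\Sigma_n)\le\operatorname{tr}(\Sigma)+d\|\Sigma_n-\Sigma\|_{\mathrm{op}}\le Cd + d\cdot Cn^{1/p-1}\le Cd$, using only the already-derived operator-norm closeness. Everything else is routine geometric-series estimation.
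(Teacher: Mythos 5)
Your proposal follows essentially the same route as the paper: explicit computation of $\Sigma_n$, geometric-series bounds for $\|\Sigma_n-\Sigma\|_{\mathrm{op}}$, a matrix square-root perturbation inequality, and the identity coupling to convert the operator-norm bound into a $\mathcal W_2$ bound via $\E\|W_n\|^2=\operatorname{tr}(\Sigma_n)$. The only stylistic difference is that you propose the integral representation of $B^{-1/2}$ plus the resolvent identity where the paper cites Higham's theorem for the Lipschitz bound on the matrix square root; both are equivalent in content.

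There is, however, one bookkeeping slip you should fix. You assert that the $X_0$-term in $\Sigma_n$ contributes only $O(1/n)$ and that $\operatorname{Cov}(\xi_0)$ is bounded. Under the actual hypothesis of Theorem \ref{T1} one only has $\E|X_0|^2\le Cd$, hence $\|\operatorname{Cov}(X_0)\|_{\mathrm{op}}\le Cd$ and, since $\xi_0=(2\eta)^{-1/2}X_0$, $\|\operatorname{Cov}(\xi_0)\|_{\mathrm{op}}\le Cd/\eta$. The corresponding term $L_1=\frac{1}{n\eta}A^{-1}(I_d-(I_d-\eta A)^n)\E[X_0X_0^\tau](I_d-(I_d-\eta A)^n)A^{-1}-\frac{2}{n}A^{-2}$ therefore satisfies $\|L_1\|_{\mathrm{op}}\le C(n\eta)^{-1}d$, not $O(1/n)$, and it dominates the other two corrections. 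Consequently $\|\Sigma_n-\Sigma\|_{\mathrm{op}}\le C(n\eta)^{-1}d=Cn^{1/p-1}d$, and the final bound is $Cn^{1/p-1}d\cdot\sqrt d=Cn^{1/p-1}d^{3/2}$, exactly as stated in the lemma, rather than the tighter $Cn^{1/p-1}d$ you claimed. Your tighter bound would only be valid under the stronger hypothesis $\|\operatorname{Cov}(X_0)\|_{\mathrm{op}}\le C$ (which is assumed in the second part of Theorem \ref{T1} with $\Sigma_0\preceq CI_d$, but not in the general first part). With that correction the argument is complete.
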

    \begin{proof}
Note that ${Z}_0, {Z}_1, \cdots, {Z}_{n-1}$ is a sequence of independent $d$-dimensional random vectors with mean zero,  then we get
\begin{align*}
		\Sigma_n
		=& \frac{1}{n} \sum_{i=0}^{n-1} \mathbb{E} [ {Z}_i {Z}_i^\tau ] \\
		=& \frac{2}{n}  {A}^{-1}  ({I}_d - ({I}_d - \eta {A})^{n }) \E[\xi_0\xi_0^\tau] ({I}_d - ({I}_d - \eta {A})^{n }){A}^{-1}\\
        &+\frac{2}{n} \sum_{i=1}^{n-1} {A}^{-1} \left[ {I}_d - ({I}_d - \eta {A})^{n - i} \right]^2 {A}^{-1}\\
        =& \frac{1}{n\eta}  {A}^{-1}  ({I}_d - ({I}_d - \eta {A})^{n }) \E[{X}_0{X}_0^\tau ]({I}_d - ({I}_d - \eta {A})^{n }){A}^{-1}\\
        &+\frac{2}{n} \sum_{i=1}^{n-1} {A}^{-1} \left[ {I}_d - ({I}_d - \eta {A})^{n - i} \right]^2 {A}^{-1}.
	\end{align*}
	Hence, we have
	\begin{align}
		\Sigma_n-\Sigma=&  \frac{1}{n} \sum_{i=0}^{n-1} \mathbb{E} [ {Z}_i {Z}_i^\tau ]  - 2{A}^{-2}:=L_1+L_2+L_3,
        \end{align}
        where 

        \begin{align*}
		L_1=& \frac{1}{n\eta}  {A}^{-1}  ({I}_d - ({I}_d - \eta {A})^{n }) \E[{X}_0{X}_0^\tau ]({I}_d - ({I}_d - \eta {A})^{n }){A}^{-1}-\frac{2}{n}A^{-2},\notag\\
        L_2=&-\frac{4}{n} \sum_{k=1}^{n-1} {A}^{-1}({I}_d - \eta {A})^{n-k}    {A}^{-1}, \notag \\
		L_3=&  \frac{2}{n} \sum_{k=1}^{n-1} {A}^{-1}  ({I}_d - \eta {A})^{2n-2k}  {A}^{-1}.
	\end{align*}
    According to the conditions that ${X}_0$ has finite second moment with order $d$,  $\alpha I_d\preceq A\preceq \beta I_d$ and the inequality $\|A_1A_2\|_{\rm op}\leq\|A_1\|_{\rm op}\|A_2\|_{\rm op}$, we have
\[
\|L_1\|_{\mathrm{op}} \leq C n^{-1}\eta^{-1}d.
\]
Similarly, we easily get $
\|L_2\|_{\mathrm{op}} \leq C n^{-1}\eta^{-1}$ and  $\|L_3\|_{\mathrm{op}}  \leq C n^{-1}\eta^{-1}.$
Thus, we obtain
\[
\|\Sigma_n-\Sigma\|_{\mathrm{op}} \leq \|L_1\|_{\mathrm{op}} + \|L_2\|_{\mathrm{op}}+ \|L_3\|_{\mathrm{op}} \leq C n^{-1}\eta^{-1}d,
\]
which together with \(c I_d\preceq\Sigma \preceq C I_d\) implies that
 there exists an integer $n_0$  such that $c I_d\preceq\Sigma_n \preceq C I_d$ for any $n\geq n_0$
    
	Thus, we have
	\begin{align}\label{ss1}
		\big\| \Sigma_n^{-1/2} -  \Sigma^{-1/2} \big\|_{\mathrm{op}}
		=&   \big\| \Sigma_n^{-1/2} (\Sigma_n^{1/2} -   \Sigma ^{ 1/2})    \Sigma^{-1/2}    \big\|_{op}\notag \\
		\leq& \|\Sigma_n^{-1/2}\|_{\mathrm{op}} \|\Sigma^{-1/2}\|_{\mathrm{op}} \| \Sigma_n^{1/2} - \Sigma^{1/2} \|_{\mathrm{op}}.
	\end{align}
	
    Applying \cite[Theorem~6.2]{higham2008functions}, 
    \begin{align}\label{ss11}
        \| \Sigma_n^{1/2} - \Sigma^{1/2} \|_{\mathrm{op}}\leq \frac{ \| \Sigma_n - \Sigma \|_{\mathrm{op}}}{\lambda_{\min}(\Sigma_n)^{1/2}+\lambda_{\min}(\Sigma)^{1/2}}.
    \end{align}

Combining \eqref{ss1} and \eqref{ss11},  we have
	\begin{align}\label{diff}
		\|\Sigma_n^{-1/2} - \Sigma^{-1/2} \|_{\mathrm{op}} \leq C\|\Sigma_n-\Sigma\|_{\mathrm{op}}\leq C n^{-1}\eta^{-1}d.
	\end{align}

		By the definition of 2-Wasserstein distance, we have
		\begin{align*}
			\mathcal{W}^2_2\left( \Sigma_n^{-1/2} W_n, \Sigma^{-1/2} W_n \right)
			\leq& \mathbb{E}\left[\big|\Sigma_n^{-1/2} W_n-\Sigma^{-1/2} W_n\big|^2\right] \\
			\leq& \big\| \Sigma_n^{-1/2}  -\Sigma^{-1/2}\big\|^2_{\mathrm{op}} \, \mathbb{E}|W_n|^2 \\
            \leq& Cn^{-2}\eta^{-2}d^3.
		\end{align*}
	 Letting $n=\lfloor\eta^{-p}\rfloor$ completes the proof.
	\end{proof}

	With the above preparations in hand, we can now proceed to prove Theorem \ref{T1}.
	
	\begin{proof}[Proof of Theorem \ref{T1}]
If $X_0$ has mean zero and finite second moment, define its covariance matrix as $\Sigma_0$, then we can construct a $d$-dimensional normal random vector $X_0^\prime$ satisfying $\E X_0^\prime=0$ and ${\rm Cov}(X_0^\prime)=\Sigma_0$. 

Fix a $n$ satisfying $n\geq n_0$, define $${Z}'_0=\eta^{-1/2}A^{-1}(I_d-(I_d-\eta A)^n){X}'_0  $$ and $${W}'_n=\frac{1}{\sqrt n}({Z}'_0+\sum_{k=1}^{n-1}{Z}_k),$$  then we have  ${\rm Cov}(W_n^\prime)={\rm Cov}(W_n)=\Sigma_n$ and $\Sigma_n^{-1/2} {W}_n^\prime \sim N(0, {I}_d)$, which implies that 
	\begin{align}\label{1406gq}
	    \mathcal{W}_2( \Sigma_n^{-1/2} {W}_n, \gamma )=&\mathcal{W}_2( \Sigma_n^{-1/2} {W}_n, \Sigma_n^{-1/2} {W}_n^\prime )\notag\\
        \leq& C(n\eta )^{-1/2}[\mathbb{E}|\Sigma_n^{-1/2}(X_0-X_0')|^2]^{1/2}\notag\\
        \leq&C(n\eta )^{-1/2}d^{1/2}.
	\end{align}
Plugging $n=\lfloor\eta^{-p}\rfloor$ into \eqref{1406gq}  implies that
\begin{align}\label{1436cw}
    \mathcal{W}_2( \Sigma_n^{-1/2} {W}_n, \gamma )\leq Cn^{\frac{1}{2p}-\frac{1}{2}}d^{\frac{1}{2}}.
\end{align}

	Then  it follows from \eqref{1436cw} and Lemma  \ref{A1}    that 
		\begin{align}\label{1415gq}
			\mathcal{W}_2(\Sigma^{-1/2} {W}_n, \gamma) \le& \mathcal{W}_2(\Sigma^{-1/2} {W}_n, \Sigma_n^{-1/2} {W}_n )+ \mathcal{W}_2( \Sigma_n^{-1/2} {W}_n, \gamma )\notag\\
            \leq &Cn^{\frac{1}{p}-1}d^{3/2}+Cn^{\frac{1}{2p}-\frac{1}{2}}d^{\frac{1}{2}}.
		\end{align}
	In particular, if $X_0=X_0^{\prime}$, then $\mathcal{W}_2( \Sigma_n^{-1/2} {W}_n, \gamma )=0$ and
\begin{align}\label{1425gq}
\mathcal{W}_2(\Sigma^{-1/2} {W}_n, \gamma) \leq\mathcal{W}_2(\Sigma^{-1/2} {W}_n, \Sigma_n^{-1/2} {W}_n )\leq Cn^{\frac{1}{p}-1}d^{3/2}.
\end{align}
The proof is complete.
	\end{proof}

	\begin{remark}

        In the linear case of $\nabla U$, the vector $W_n$ can be expressed as a sum of independent Gaussian random vectors. This decomposition enables the convergence rate to depend primarily on the discrepancy between the true variance and the asymptotic variance. Under the general setting of Assumption \ref{Ass1}, however, such a decomposition is no longer applicable. As a result, the convergence rate in Theorem \ref{T2} is significantly slower than that in Theorem \ref{T1}, and it becomes highly sensitive to the variability of $\nabla U$.
	\end{remark}

	\section{Stein's method and Proof of Theorem \ref{T2}}\label{S3}
    When $\nabla U$ is nonlinear, $W_n$  no longer admits a direct representation as a weighted sum of independent variables, but it can be expressed as the sum of a martingale and a remainder term (see \eqref{ts1} below). Therefore, the strategy of proving Theorem \ref{T2} is to estimate the $1$-Wasserstein distance between this martingale and $\gamma$, as well as  the remainder term.  For the former, we employ the Stein's method,  a powerful tool for  studying  such a non-asymptotic error estimate. For background on Stein’s method, we refer the reader   to \cite{Stein1972,arratia1989two,chen2011normal,ross2011fundamentals}, and for recent applications, to  \cite{ganguly2019,fang2024}.

In Subsection \ref{sub31}, we first establish a general theorem by Stein’s method,  and then employ it to derive an error estimate between the martingale part and $\gamma$. In Subsection \ref{sub32}, an estimate for the remainder term is obtained through direct stochastic calculations. Based on the  results from the preceding two sections, we shall prove  Theorem \ref{T2} in Subsection \ref{sub33}.

    \subsection{Stein's method}\label{sub31}

    When using Stein’s method to bound distance between a random vector  $W$ of interest and $\gamma$, one typically follows the following three steps.  The first step is to find an appropriate operator ${\mathcal A}$ which satisfies
	\begin{align*}
		\mathbb{E}_{X\sim \gamma} {\mathcal A} g(X)=0
	\end{align*}
	for a class of functions $g$. The second step is to find the solution $g_f$ of the Stein equation 
	\begin{align*}
		{\mathcal A} g(x)=f(x)-\gamma(f) 
	\end{align*}
	and to characterize the properties of $g_f$.  The third step is to express the Wasserstein distance $\mathcal{W}_1(W, \gamma)$ as
	\begin{align*}
		\mathcal{W}_1(W, \gamma)&=\sup_{f:\|f\|_{\rm{Lip}}\leq 1}|\mathbb{E}f(W)-\gamma(f) |\\
        &=\sup_{f:\|f\|_{\rm{Lip}}\leq 1}|\mathbb{E}{\mathcal A} g_f(W) |.
	\end{align*}
In turn, to control the term $|\mathbb{E}{\mathcal A} g_f(W) |$, we use the exchangeable pair approach.

 For a random vector $W$ of interest, we make the following assumptions. 
\begin{assumption}\label{1544gl}

(i) $\E W=0$ and $c I_d \preceq \mathbb{E}[W W^\tau ] \preceq C I_d$ for some positive constants $c$ and $C$;

(ii) There exists   a random vector $X$ valued in state space $\mathcal{X}$ and a measurable function $f:\mathcal{X} \to \mathbb{R}^d$ such that $W=f(X)$;

(iii)  For the $X$ in (ii), we can construct an exchangeable pair $(X,X^\prime)$ and an antisymmetric function \(D := D(X, X') \in \mathbb{R}^d\) such that \(\mathbb{E}[D D^\tau ] \preceq C I_d\);

(iv) Let $W^\prime=f(X^\prime)$ and denote $\delta:=W^\prime-W$, it holds that
\begin{equation}\label{ASSU}
		\mathbb{E}[D \mid X] = \lambda W, \quad \mathbb{E}[D \delta^\tau  \mid X] = 2 \lambda (I_d + \Xi).
	\end{equation}
\end{assumption}

	\begin{theorem}\label{L1}
		Under \textbf{Assumption} \ref{1544gl}, we have
		\[
		\mathcal{W}_1(W, \gamma) \leq C \left\{ \frac{1}{\lambda} \mathbb{E} \left[ |D|\, |\delta|^2 \big(|\log |\delta|| \lor 1 \big) \right] + \sqrt{d} \, \mathbb{E} \|\Xi\|_{\mathrm{HS}} \right\}.
		\]
	\end{theorem}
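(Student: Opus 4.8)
The plan is to prove Theorem~\ref{L1} by the standard three-step Stein program outlined just above, with the exchangeable-pair identity \eqref{ASSU} providing the bridge. Fix a test function $f$ with $\|f\|_{\rm Lip}\le1$, let $\vp=g_f$ solve the Stein equation $\Delta\vp(x)-\langle x,\nabla\vp(x)\rangle=f(x)-\gamma(f)$, and recall the classical regularity bounds for the multivariate Gaussian Stein equation: $\|\nabla^2\vp\|$ is controlled (in Hilbert--Schmidt/operator norm) and, crucially, $\nabla^2\vp$ is log-Lipschitz, i.e. $\|\nabla^2\vp(x)-\nabla^2\vp(y)\|_{\rm HS}\le C|x-y|(|\log|x-y||\vee1)$ — this is exactly why the $(|\log|\delta||\vee1)$ factor appears in the statement. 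Then $\mathcal W_1(W,\gamma)=\sup_f|\E[\Delta\vp(W)-\langle W,\nabla\vp(W)\rangle]|$, so it suffices to bound this single expectation uniformly in $f$.

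\medskip
The core computation is to re-express $\E\langle W,\nabla\vp(W)\rangle$ using the exchangeable pair. By antisymmetry of $D$ and exchangeability of $(X,X')$ one has $\E\langle D,\nabla\vp(W')+\nabla\vp(W)\rangle=0$, hence $\E\langle D,\nabla\vp(W')-\nabla\vp(W)\rangle=-2\E\langle D,\nabla\vp(W)\rangle$. Using $\E[D\mid X]=\lambda W$ gives $\E\langle D,\nabla\vp(W)\rangle=\lambda\E\langle W,\nabla\vp(W)\rangle$, so $\E\langle W,\nabla\vp(W)\rangle=-\tfrac1{2\lambda}\E\langle D,\nabla\vp(W')-\nabla\vp(W)\rangle$. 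Now Taylor-expand $\nabla\vp(W')-\nabla\vp(W)=\nabla^2\vp(W)\delta+R$, where $R$ is the second-order remainder controlled by the log-Lipschitz modulus of $\nabla^2\vp$: $|R|\le C|\delta|^2(|\log|\delta||\vee1)$ pointwise. The main term becomes $-\tfrac1{2\lambda}\E\langle D,\nabla^2\vp(W)\delta\rangle=-\tfrac1{2\lambda}\E\langle\nabla^2\vp(W),\,D\delta^\tau\rangle_{\rm HS}$; conditioning on $X$ and invoking $\E[D\delta^\tau\mid X]=2\lambda(I_d+\Xi)$ turns this into $-\E\langle\nabla^2\vp(W),I_d+\Xi\rangle_{\rm HS}=-\E\Delta\vp(W)-\E\langle\nabla^2\vp(W),\Xi\rangle_{\rm HS}$. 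Substituting back, the $\E\Delta\vp(W)$ terms cancel and we are left with
\[
\E[\Delta\vp(W)-\langle W,\nabla\vp(W)\rangle]
= \E\langle\nabla^2\vp(W),\Xi\rangle_{\rm HS}
+\frac{1}{2\lambda}\E\langle D,R\rangle.
\]
The first term is bounded by $\E[\|\nabla^2\vp(W)\|_{\rm op}\|\Xi\|_{\rm HS}]\le\|\nabla^2\vp\|_\infty\,\E\|\Xi\|_{\rm HS}$, and here the Stein-solution bound on $\|\nabla^2\vp\|_{\rm op}$ in terms of $\|f\|_{\rm Lip}$ may carry a $\sqrt d$ factor in this high-dimensional regime, producing the $\sqrt d\,\E\|\Xi\|_{\rm HS}$ term. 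The second term is bounded by $\tfrac{C}{\lambda}\E[|D|\,|\delta|^2(|\log|\delta||\vee1)]$ directly from the pointwise remainder estimate and Cauchy--Schwarz as needed, using Assumption~\ref{1544gl}(iii) to control $|D|$-moments where required.

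\medskip
\textbf{The main obstacle} is the justification and bookkeeping around the log-Lipschitz regularity of $\nabla^2\vp$ and the second-order Taylor remainder: one must be careful that the remainder bound $|R|\le C|\delta|^2(|\log|\delta||\vee1)$ holds uniformly (the $\log$ singularity near $|\delta|=0$ is integrable, but the integral form of the remainder $R=\int_0^1(\nabla^2\vp(W+t\delta)-\nabla^2\vp(W))\delta\,dt$ must be estimated using the precise modulus of continuity, not merely $\|\nabla^2\vp\|_\infty$, since a crude bound would lose the cancellation of the $\E\Delta\vp(W)$ terms). A secondary point is tracking the dimension dependence in the Stein-equation estimates — one needs the sharp form $\|\nabla^2\vp\|_{\rm op}\lesssim\sqrt d$ and the log-Lipschitz constant independent of $d$ — and these should be quoted from the Stein's-method literature (e.g. \cite{chen2011normal}) or proved via the semigroup representation $\vp(x)=-\int_0^\infty(\E f(e^{-t}x+\sqrt{1-e^{-2t}}\gamma)-\gamma(f))\,dt$. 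Everything else — antisymmetry, the conditioning identities, Cauchy--Schwarz — is routine given Assumption~\ref{1544gl}.
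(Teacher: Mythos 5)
Your proposal follows essentially the same route as the paper: start from the exchangeable-pair identity $\E\langle D,\nabla\vp(W)+\nabla\vp(W')\rangle=0$, use the two linearity conditions in Assumption~\ref{1544gl}(iv) to turn $\E\langle D,\nabla\vp(W)\rangle$ into $\lambda\E\langle W,\nabla\vp(W)\rangle$ and $\E\langle D\delta^\tau\mid X\rangle$ into $2\lambda(I_d+\Xi)$, Taylor-expand $\nabla\vp(W')-\nabla\vp(W)$ and estimate the remainder via the log-Lipschitz modulus of $\nabla^2\vp$, and exploit the cancellation of the $\E\Delta\vp(W)$ terms against the Stein operator.

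One small correction on the origin of the $\sqrt d$: it is not that the Stein-equation bound on $\|\nabla^2\vp\|_{\rm op}$ carries a hidden $\sqrt d$ (the paper quotes $\|\nabla^2\vp\|_{\rm op}\le C\|\nabla h\|$ from \cite[Theorem~3.1]{Fang2018}, which is dimension-free), and the intermediate inequality $|\langle\nabla^2\vp(W),\Xi\rangle_{\rm HS}|\le\|\nabla^2\vp(W)\|_{\rm op}\|\Xi\|_{\rm HS}$ that you write is false in general (take $A=B=I_d$). The $\sqrt d$ enters through the norm conversion $|\langle A,B\rangle_{\rm HS}|\le\|A\|_{\rm HS}\|B\|_{\rm HS}\le\sqrt d\,\|A\|_{\rm op}\|B\|_{\rm HS}$, i.e.\ the second chain in \eqref{ophs} applied to $A=\nabla^2\vp(W)$. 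With that fix the argument matches the paper's line for line.
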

\begin{remark}
		{In fact, the approach in \cite{Fang2022} is based on the linear conditions
		\[
		\mathbb{E}[\delta \mid X] = \lambda (W + R_1),
		\quad \mathbb{E}[\delta \delta^\tau  \mid X] = 2\lambda (I_d + R_2),
		\]
		and proceeds from the identity
		\[
		0 = \mathbb{E}\big[ f(W') - f(W) \big].
		\]
		A Taylor expansion of the right-hand side  yields terms involving powers of $\delta$. In contrast, our method starts from the identity
		\[
		0 = \mathbb{E} \big\langle D, \nabla f(W) + \nabla f(W') \big\rangle,
		\]
		so that, after Taylor expansion, in each derivative term of \(f\) one occurrence of \(\delta\) is replaced by \(D\). This modification is motivated by the fact that \(W\) is globally dependent, and \(\delta = W' - W\) may be highly complex. By constructing \(D\) such that \(\mathbb{E}[D \mid X] = \lambda W\), we obtain a structure that is simpler than the corresponding \(\mathbb{E}[\delta \mid X] = \lambda(W + R_1)\) in \cite{Fang2022}. Moreover, in higher orders, the magnitude of \(|D \,\delta^{\,k-1}|\) is often smaller than that of \(|\delta^k|\), which will yield tighter bounds in our setting.}
	\end{remark}
	
	\begin{proof}[Proof of Theorem \ref{L1}]
		Since \((W, W')\) is an exchangeable pair, for any sufficiently smooth function \(f\), we have
		\begin{align*}
		0 &= \mathbb{E}\langle D, \nabla f(W) + \nabla f(W') \rangle\\
        &= 2 \mathbb{E} \langle D, \nabla f(W) \rangle - \mathbb{E} \langle D, \nabla f(W) - \nabla f(W') \rangle\\
		&=2 \lambda \mathbb{E} \langle W, \nabla f(W) \rangle - \int_{-1}^0 \mathbb{E} \langle D \delta^\tau , \nabla^2 f(W + t \delta) \rangle_{\mathrm{HS}} \, \dif t,
		\end{align*}
		so that
		\begin{align}\label{1116gq}
		\mathbb{E} \langle W, \nabla f(W) \rangle = \frac{1}{2 \lambda} \int_{-1}^0 \mathbb{E} \langle D \delta^\tau , \nabla^2 f(W + t \delta) \rangle_{\mathrm{HS}} \, \dif t.
		\end{align}
		
		Next, decompose the integral into
		\[
		\int_{-1}^0 \mathbb{E} \langle D \delta^\tau , \nabla^2 f(W + t \delta) \rangle_{\mathrm{HS}} \dif t = \int_{-1}^0 \mathbb{E} \langle \mathbb{E}[D \delta^\tau  \mid W], \nabla^2 f(W) \rangle_{\mathrm{HS}} \dif t + R,
		\]
		where
		\[
		R = \int_{-1}^0 \mathbb{E} \langle D \delta^\tau , \nabla^2 f(W + t \delta) - \nabla^2 f(W) \rangle_{\mathrm{HS}} \dif t.
		\]
		By condition \eqref{ASSU}, we have
		\begin{align}\label{1115gq}
		\int_{-1}^0 \mathbb{E} \langle D \delta^\tau , \nabla^2 f(W + t \delta) \rangle_{\mathrm{HS}} \dif t=2 \lambda \mathbb{E} \Delta f(W) + 2 \lambda \mathbb{E} \langle \Xi, \nabla^2 f(W) \rangle_{\mathrm{HS}} + R.
		\end{align}
		
		Combining \eqref{1116gq} and \eqref{1115gq} implies that  
		\[
		\mathbb{E} \big( \Delta f(W) - \langle W, \nabla f(W) \rangle \big) = - \mathbb{E} \langle \Xi, \nabla^2 f(W) \rangle_{\mathrm{HS}} - \frac{1}{2 \lambda} R.
		\]
		
		 The Stein operator $\mathcal L$ of the $d$-dimensional standard normal distribution $\gamma$ is given by
         \begin{align*}
             \mathcal L f(x)=\Delta f(x) - \langle x, \nabla f(x) \rangle .
         \end{align*}
Then,  consider the following Stein equation for  Lipschitz-1 $h$
		\begin{align}\label{1128gq}
		\Delta f(x) - \langle x, \nabla f(x) \rangle = h(x) - \gamma(h),
		\end{align}
	from \cite[Theorem~3.1]{Fang2018}, the solution \(f\) of \eqref{1128gq} satisfies
		\[
		\|\nabla^2 f(x)\|_{\mathrm{op}} \leq C  \|\nabla h\|, \quad \|\nabla^2 f(x+\varepsilon u) - \nabla^2 f(x)\|_{\mathrm{op}} \leq C  \|\nabla h\| |\varepsilon| (|\log |\varepsilon|| \lor 1),
		\]
		for all \(x, u \in \mathbb{R}^d\) with \(|u| \leq 1\) and \(\varepsilon \in \mathbb{R}\).

        Hence, we have
\begin{align*}
    |\mathbb{E}[h(W)] -  \gamma(h)|&=|\mathbb{E} [ \Delta f(W) - \langle W, \nabla f(W) \rangle ]|\\
    &=|\mathbb{E} \langle \Xi, \nabla^2 f(W) \rangle_{\mathrm{HS}}|+\frac{1}{2 \lambda} |R|\\
    &\leq C  \sqrt{d} \|\nabla h\| \mathbb{E} \|\Xi\|_{\mathrm{HS}}+C  \|\nabla h\| \mathbb{E} \big[ |D| |\delta|^2 (|\log |\delta|| \lor 1) \big],
\end{align*}
where the last inequality follows from  \eqref{ophs} and \cite[Eq. (3.2)]{Fang2022}. The proof is complete.
	\end{proof}

Next, we decompose  $W_n 
 $ in \eqref{LMC-2} into two parts: the martingale part and the remainder term. Then, we use Theorem \ref{L1} to control the distance between the martingale part and $\gamma$.
 
 Define the coordinate map $h_i:\R^d\to\R$ as
    \begin{align}\label{cmap}
        h_i(x)=x_i, \quad {\rm for } \ \ x=(x_1,\ldots,x_d).
    \end{align}
    For each $h_i$, denote by $\varphi_i$ the solution of the following Stein's equation
	\begin{align}\label{SE}
		\mathcal{A}\varphi_i=h_i-\pi(h_i),
	\end{align}
	where $\mathcal{A}$ and $\pi$ are the generator  and the unique invariant distribution  of SDE \eqref{LSDE} respectively.  Then we have the following proposition on the regularity of $\varphi_i$.

\begin{proposition}\label{prop1}
		Under \textbf{Assumption} \ref{Ass1}, let  $(X_t^x)_{t\geq0}$ be the Langevin dynamics with initial state $x$ defined in \eqref{LSDE}, and let $\mathcal A$  and $\pi$ denote its generator  and unique invariant distribution  respectively. For any function $g: \mathbb{R}^d\rightarrow \mathbb{R}$ satisfying $\|\nabla ^{k}g\|_{\mathrm{op}}\leq C$, $k=0,1,2,3$, the solution  of Stein equation
		\begin{align*}
			\mathcal{A}f(x)=g-\pi(g)
		\end{align*}
		is given by
		\begin{align}\label{solu}
			f_g(x) = -\int_{0}^{\infty} \mathbb{E}\big[g(X_t^x) - \pi(g)\big] \, \dif t,
		\end{align}
		and  satisfies
		\begin{align}\label{sobo1}
			\|\nabla^k f_g\|_{\mathrm{op}}\leq C, k=1,2,3.
		\end{align}
		In particular, if we let $g$ be the coordinate map $h_i$ in \eqref{cmap}, then the solution $\varphi_i$ of Stein equation 
        \begin{align*}
		\mathcal{A}\varphi_i=h_i-\pi(h_i),
	\end{align*}
    satisfies
    \begin{align}\label{sobo2}
		\|\nabla^k \varphi_i\|_{\mathrm{op}}\leq C
	\end{align} 
    for  $k=1,2,3,4$. 
	\end{proposition}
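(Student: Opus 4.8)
The plan is to differentiate the semigroup $P_tg(x):=\mathbb{E}\big[g(X_t^x)\big]$ in the spatial variable $x$, to bound these derivatives by exploiting the contraction of the \emph{Jacobi flows} — the derivatives of the flow map $x\mapsto X_t^x$ in its initial datum — and then to integrate over $t\in[0,\infty)$. Write $J_t^{(k)}:=\nabla_x^k X_t^x$ for $k\ge1$; since the Brownian term in \eqref{LSDE} is independent of $x$, each $J_t^{(k)}$ solves pathwise a linear ODE obtained by differentiating \eqref{LSDE} in $x$. For $k=1$,
\[
\frac{\dif}{\dif t}J_t^{(1)}=-\nabla^2U(X_t^x)\,J_t^{(1)},\qquad J_0^{(1)}=I_d,
\]
and for $k\ge2$ the equation has the \emph{same} homogeneous part $-\nabla^2U(X_t^x)\,\cdot$ plus an inhomogeneous forcing term $F_t^{(k)}$ built from $\nabla^jU(X_t^x)$, $3\le j\le k+1$, contracted against products $J_t^{(m_1)}\otimes\cdots\otimes J_t^{(m_{j-1})}$ with $m_1+\cdots+m_{j-1}=k$, and with $J_0^{(k)}=0$; these are the classical variational equations, valid because $U\in\mathcal C^5$ by \textbf{Assumption}~\ref{Ass1}(i), so the flow is $\mathcal C^4$ in the initial condition. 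From $\alpha I_d\preceq\nabla^2U$ and $\frac{\dif}{\dif t}|J_t^{(1)}v|^2=-2\langle\nabla^2U(X_t^x)J_t^{(1)}v,\,J_t^{(1)}v\rangle\le-2\alpha|J_t^{(1)}v|^2$ we get $\|J_t^{(1)}\|_{\mathrm{op}}\le e^{-\alpha t}$, so the propagator $\Phi_{s,t}$ of $\dot Y=-\nabla^2U(X_t^x)Y$ obeys $\|\Phi_{s,t}\|_{\mathrm{op}}\le e^{-\alpha(t-s)}$, and Duhamel's formula gives $J_t^{(k)}=\int_0^t\Phi_{s,t}F_s^{(k)}\dif s$ for $k\ge2$. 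Using $\|\nabla^3U\|_{\mathrm{op}},\|\nabla^4U\|_{\mathrm{op}},\|\nabla^5U\|_{\mathrm{op}}\le M$ from \textbf{Assumption}~\ref{Ass1}(iv) and $\|A[v_1,\dots,v_r]\|_{\mathrm{op}}\le\|A\|_{\mathrm{op}}\prod_i\|v_i\|_{\mathrm{op}}$, one checks inductively that $\|F_s^{(k)}\|_{\mathrm{op}}\le Ce^{-2\alpha s}$ for $k=2,3,4$ (the slowest contribution being $\nabla^3U[J_s^{(k-1)},J_s^{(1)}]$), hence
\[
\|J_t^{(k)}\|_{\mathrm{op}}\le\int_0^t e^{-\alpha(t-s)}Ce^{-2\alpha s}\dif s\le\frac{C}{\alpha}e^{-\alpha t},\qquad k=1,2,3,4 .
\]

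Next, by the Faà di Bruno formula, $\nabla_x^k\big(g(X_t^x)\big)$ is a finite sum of terms $\nabla^{|\pi|}g(X_t^x)\big[J_t^{(|B_1|)},\dots,J_t^{(|B_{|\pi|}|)}\big]$ indexed by the set partitions $\pi=\{B_1,\dots,B_{|\pi|}\}$ of $\{1,\dots,k\}$. Since $\|\nabla^jg\|_{\mathrm{op}}\le C$ for $0\le j\le3$ and each $J_t^{(m)}$ decays like $e^{-\alpha t}$, every such term has operator norm at most $C^{|\pi|+1}e^{-\alpha|\pi|t}\le Ce^{-\alpha t}$ (because $|\pi|\ge1$), so $\|\nabla_x^kP_tg\|_{\mathrm{op}}\le\mathbb{E}\|\nabla_x^k(g(X_t^x))\|_{\mathrm{op}}\le Ce^{-\alpha t}$ for $k=1,2,3$; the interchange of $\nabla_x^k$ and $\mathbb{E}$ is licensed by these bounds, which are locally uniform in $x$. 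The integral \eqref{solu} converges since $|P_tg(x)-\pi(g)|\le\|\nabla g\|\,\mathcal{W}_1(\mathrm{Law}(X_t^x),\pi)\le\|\nabla g\|\,e^{-\alpha t}\mathcal{W}_2(\delta_x,\pi)$ by \eqref{W2}, and differentiating under it gives
\[
\|\nabla^k f_g\|_{\mathrm{op}}=\Big\|\int_0^\infty\nabla_x^k\big(P_tg-\pi(g)\big)\dif t\Big\|_{\mathrm{op}}\le\int_0^\infty Ce^{-\alpha t}\dif t=\frac{C}{\alpha},
\]
which is \eqref{sobo1}. That $f_g$ solves $\mathcal Af_g=g-\pi(g)$ is the standard semigroup identity $\mathcal Af_g=-\int_0^\infty\mathcal AP_tg\,\dif t=-\int_0^\infty\partial_tP_tg\,\dif t=g-\lim_{t\to\infty}P_tg=g-\pi(g)$, the exchange of $\mathcal A$ with $\int_0^\infty$ and the limit being justified by the exponential bounds just obtained.

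For the coordinate map $h_i(x)=x_i$ one has $\nabla h_i\equiv e_i$ and $\nabla^jh_i\equiv0$ for $j\ge2$, so the Faà di Bruno expansion of $\nabla_x^k\big(h_i(X_t^x)\big)=\nabla_x^k(X_t^x)_i$ collapses to the single one-block term, namely the $i$-th slice of $J_t^{(k)}$, whose operator norm is at most $\|J_t^{(k)}\|_{\mathrm{op}}\le Ce^{-\alpha t}$ — now for $k=1,2,3,4$, the extra order being exactly what the bound on $\nabla^5U$ in \textbf{Assumption}~\ref{Ass1}(iv) supplies at order $k=4$ in the first step. Integrating, and using $|P_th_i(x)-\pi(h_i)|\le\mathcal{W}_1(\mathrm{Law}(X_t^x),\pi)\le e^{-\alpha t}\mathcal{W}_2(\delta_x,\pi)$ for the convergence of \eqref{solu} (note that the unboundedness of $h_i$ is harmless here, only $\nabla h_i$ and this exponential decay being used), we obtain $\|\nabla^k\varphi_i\|_{\mathrm{op}}\le C$ for $k=1,2,3,4$, i.e.\ \eqref{sobo2}.

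The main obstacle is the first step: setting up the variational ODEs for the higher-order Jacobi flows with the correct tensorial forcing terms and verifying, through Duhamel's formula and the bounds on $\nabla^3U,\nabla^4U,\nabla^5U$, that the forcing decays rapidly enough for the $e^{-\alpha t}$ rate to propagate to all orders $k\le4$ (this is the "exponential convergence of the Jacobi flow" alluded to in the remark). The combinatorics of the Faà di Bruno expansion and the measure-theoretic justifications for differentiating under $\mathbb{E}$ and $\int_0^\infty$ are routine once these decay estimates are in place.
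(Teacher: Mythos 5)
Your proof is correct and reaches the same conclusions, but the route through the Jacobi-flow estimates differs from the paper's in a way worth noting. The paper's Lemma~\ref{jm} establishes $L^p$-moment bounds of the higher-order flows, with carefully chosen exponents $16,8,4,2$, by writing down the variational ODEs, applying the chain rule (phrased as ``It\^o's formula'', though the noise is additive and drops out after one differentiation in $x$), and invoking a stochastic comparison theorem; these moment bounds are then combined via H\"older's inequality in the proof of Proposition~\ref{prop1}. You instead observe that, conditionally on the path, the variational equations are deterministic linear ODEs with a common homogeneous part $-\nabla^2 U(X_t^x)\,\cdot\,$, whose propagator $\Phi_{s,t}$ contracts at rate $e^{-\alpha(t-s)}$ by strong convexity, and you obtain \emph{pathwise} operator-norm bounds $\|J_t^{(k)}\|_{\mathrm{op}}\le Ce^{-\alpha t}$ for $k\le4$ directly from Duhamel's formula, with no need for moments or H\"older. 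This is a genuine simplification: it makes the descending cascade of exponents and the comparison theorem unnecessary, and it makes clear exactly how each of $\nabla^3 U,\nabla^4 U,\nabla^5 U$ from Assumption~\ref{Ass1}(iv) enters (as the new top-order forcing at each step). Your Fa\`a di Bruno decomposition of $\nabla_x^k(g(X_t^x))$ parallels the explicit chain-rule expansions the paper writes out for $k\le3$. One small point you handle more carefully than the paper: since $h_i$ is unbounded, \eqref{sobo1} does not literally apply to $g=h_i$, and you correctly flag that only $\nabla h_i\equiv e_i$, the vanishing of $\nabla^j h_i$ for $j\ge2$, and the exponential decay are used, collapsing the expansion of $\nabla_x^k\varphi_i$ to the single term involving $J_t^{(k)}$.
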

    For clarity of presentation, we defer the proof of Proposition \ref{prop1} to Appendix~\ref{B}. 
	
According to the definition of $h_i$,   $W_n$   can be represented by 
	\begin{eqnarray}\label{W}
		W_n &= &\eta^{1/2} \left( \frac{1}{\sqrt{n}} \sum_{k=0}^{n-1}  X _k - \pi(h) \right)\nonumber\\
		&=& \eta^{1/2}\left(\frac{1}{\sqrt{n}}\sum_{k=0}^{n-1}h_1( X _k)-\pi(h_1),\cdots,\frac{1}{\sqrt{n}}\sum_{k=0}^{n-1}h_d( X _k)-\pi(h_d)\right)^\tau \nonumber\\
		&=:& (W_{n, 1},\cdots,W_{n, d})^\tau ,
	\end{eqnarray}
	where \(\pi(h) = (\pi(h_1), \ldots, \pi(h_d))^\tau \) with \(\pi(h_i) = \int_{\mathbb{R}^d} h_i(x) \, \pi(\dif x)\).
    Then from  \cite[Eq.~3.1]{Lu2022}, we have
	\begin{align}\label{ts1}
		W_n=(\mathcal{H}_{n,1},\cdots,\mathcal{H}_{n,d})^{\tau}+(\mathcal{R}_{n,1},\cdots,\mathcal{R}_{n,d})^{\tau}:=\mathcal{H}_{n}+\mathcal{R}_{n},
	\end{align}
	where for each $i=1,\cdots,d$,
	\begin{align*}
		\mathcal{H}_{n,i}=-\frac{\sqrt{2}}{\sqrt{n} }\sum_{k=0}^{n-1}\langle \nabla \varphi_i( X _k), \xi_{k+1}\rangle,\quad  \mathcal{R}_{n,i}= -\sum_{j=1}^{6}\mathcal{R}_{n,i,j},
	\end{align*}
	and
\begin{align*}
		\mathcal{R}_{n,i,1}=&\frac{1}{\sqrt{n\eta}} [\varphi_i( X _0)-\varphi_i( X _{n-1})],\\
		\mathcal{R}_{n,i,2}=&\frac{\sqrt{\eta}}{\sqrt{n}}\sum_{k=0}^{n-1}\langle \nabla^2\varphi_i( X _k),\xi_{k+1}\xi_{k+1}^{\tau}-I_d\rangle_{\mathrm{HS}},\notag\\
		\mathcal{R}_{n,i,3}=&\frac{\eta}{\sqrt{2n}}\sum_{k=0}^{n-1}\left[\langle\nabla^2\varphi_i( X _k),-\nabla U( X _k)\xi_{k+1}^{\tau}\rangle_{\mathrm{HS}}+\langle\nabla^2\varphi_i( X _k),-\xi_{k+1}\nabla U( X _k)^{\tau}\rangle_{\mathrm{HS}}\right],\notag\\
		\mathcal{R}_{n,i,4}=&\frac{\sqrt{2}\eta}{\sqrt{n}}\sum_{k=0}^{n-1}\int_{0}^{1}(1-t)^2\langle \nabla^3\varphi_i( X _k+t\Delta X _k),\xi_{k+1}^{\otimes 3}\rangle_{\rm HS}\dif t,\\
		\mathcal{R}_{n,i,5}=&\frac{\eta^{3/2}}{2\sqrt{n}}\sum_{k=0}^{n-1}\langle \nabla^2 \varphi_i( X _k),\nabla U( X _k)\nabla U( X _k)^{\tau}\rangle_{\mathrm{HS}}\notag\\
		&-\frac{\eta^{5/2}}{2\sqrt{n}}\sum_{k=0}^{n-1}\int_{0}^{1}(1-t)^2\langle \nabla^3\varphi_i( X _k+t\Delta X _k), \nabla U( X _k)^{\otimes 3}\rangle_{\rm HS}\dif t,\notag\\
		\mathcal{R}_{n,i,6}=&-\frac{\eta^{3/2}}{\sqrt{n}}\sum_{k=0}^{n-1}\int_{0}^{1}(1-t)^2\Big[\langle \nabla^3\varphi_i( X _k+t\Delta X _k), \nabla U( X _k)\otimes\xi_{k+1}\otimes\xi_{k+1}\rangle_{\rm HS}\notag\\
		&\left.\quad\qquad\qquad\qquad\quad\ { -}\frac{\sqrt{\eta}}{2} \langle \nabla^3\varphi_i( X_k+t\Delta X_k), \nabla U( X_k)\otimes\nabla U( X_k)\otimes\xi_{k+1}\rangle_{\rm HS}\right]\dif t,\notag\\
		\Delta  X_k=&-\eta \nabla U( X_k)+\sqrt{2\eta}\xi_{k+1}.\notag
	\end{align*}

     We have the following estimate for the main term $\mathcal H_n$.

\begin{proposition}\label{L21}
		Under the same Assumptions as in Theorem \ref{T2}, it holds that
		\[
		\mathcal{W}_1\big(\Sigma^{-1/2}\mathcal{H}_{n}, \gamma\big) \le C\big(\eta^{1/2}d^3+n^{-1/2}d^{3}\log (nd)+(n\eta)^{-1/2}d^{5/2}\big),\]
        where $\Sigma$ was defined as in \eqref{cm}.

	\end{proposition}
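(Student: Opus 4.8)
The plan is to apply Theorem~\ref{L1} with $W=\Sigma^{-1/2}\mathcal H_n$, where by \eqref{ts1} $\mathcal H_n=-\tfrac{\sqrt2}{\sqrt n}\sum_{k=0}^{n-1}G_k\xi_{k+1}$ and $G_k\in\R^{d\times d}$ is the matrix whose $i$-th row is $\nabla\varphi_i(X_k)^\tau$. Since $\mathcal H_n$ is a martingale, $\E W=0$, and $\E[WW^\tau]=\Sigma^{-1/2}\E[\mathcal H_n\mathcal H_n^\tau]\Sigma^{-1/2}$ with $\E[\mathcal H_n\mathcal H_n^\tau]=\tfrac2n\sum_k\E[G_kG_k^\tau]=2\,\pi_\eta(\nabla\varphi_i^\tau\nabla\varphi_j)_{i,j}$ by stationarity of $X_0\sim\pi_\eta$; by Proposition~\ref{prop1}, the definition \eqref{cm} of $\Sigma$, and Lemma~\ref{strict} this gives $cI_d\preceq\E[WW^\tau]\preceq CI_d$, so Assumption~\ref{1544gl}(i) holds. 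What remains is to supply the exchangeable pair and the antisymmetric vector $D$, and to estimate $\Xi$ and the moment term in Theorem~\ref{L1}.

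For the exchangeable pair I would use a \emph{local} move on the path rather than a global re-run of the chain: draw $I$ uniformly on $\{1,\dots,n-1\}$, independently of the chain, and let $X'$ be the path $(X_0,\dots,X_n)$ with the single coordinate $X_I$ replaced by a fresh draw from $\P(X_I\in\cdot\mid X_{I-1},X_{I+1})$ (a single-site Gibbs update; for $\eta<\alpha/(2\beta^2)$ this conditional is strongly log-concave with covariance of order $\eta I_d$). This keeps the law of the path invariant and changes only $X_I$, so --- writing $\mathcal H_n$ through $\xi_{k+1}=(X_{k+1}-X_k+\eta\nabla U(X_k))/\sqrt{2\eta}$ --- the increment $\delta:=W'-W$ involves only the two summands $k=I-1,I$ and carries no downstream propagation. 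Choosing $D$ suitably --- antisymmetric under $(X,X')\mapsto(X',X)$ and built from the martingale increment at time $I$ --- one arranges $\E[D\mid X]=\lambda W$ \emph{exactly} with $\lambda\asymp 1/n$ (the average over $I$ reconstituting $\mathcal H_n$); pinning down this $D$ is itself a delicate point. The identity $\E[D\delta^\tau\mid X]=2\lambda(I_d+\Xi)$ then defines $\Xi$, which I would split as $\Xi=\Xi_1+\Xi_2$, where $\Xi_1=\tfrac1n\sum_{k=0}^{n-1}\Sigma^{-1/2}G_k(I_d+\xi_{k+1}\xi_{k+1}^\tau)G_k^\tau\Sigma^{-1/2}-I_d$ is the fluctuation of the empirical covariance about its $\pi$-mean (the $I_d$ being $\Sigma^{-1/2}\tfrac12\Sigma\,\Sigma^{-1/2}$ by \eqref{cm}), and $\Xi_2$ collects the genuinely second-order pieces (curvature terms in $\nabla^2\varphi_i,\nabla^3\varphi_i$, the $\eta\nabla U$ corrections, and the $\pi_\eta$-versus-$\pi$ mismatch).

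The estimates then go as follows. For $\Xi_1$ write $\Xi_1=\tfrac1n\sum_k(M_k-\E M_k)+(\tfrac1n\sum_k\E M_k-I_d)$ with $M_k=\Sigma^{-1/2}G_k(I_d+\xi_{k+1}\xi_{k+1}^\tau)G_k^\tau\Sigma^{-1/2}$. The deterministic remainder is, in each block, $2\Sigma^{-1/2}(\pi_\eta-\pi)(\nabla\varphi_i^\tau\nabla\varphi_j)\Sigma^{-1/2}$, bounded in Hilbert--Schmidt norm by $C\,d\,\mathcal W_1(\pi_\eta,\pi)\lesssim d^{3/2}\eta^{1/2}$ using the Lipschitz control of $\nabla\varphi_i,\nabla^2\varphi_i$ from Proposition~\ref{prop1} and the standard discretization estimate $\mathcal W_1(\pi_\eta,\pi)\lesssim d^{1/2}\eta^{1/2}$. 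For the fluctuation I would bound $\E\|\tfrac1n\sum_k(M_k-\E M_k)\|_{\mathrm{HS}}^2\le\tfrac1{n^2}\sum_{k,l}|\E\langle M_k-\E M_k,M_l-\E M_l\rangle_{\mathrm{HS}}|$ and control the autocovariances by the geometric ergodicity of the LMC chain (one-step mixing factor $1-c\eta\alpha$, hence $\sum_j\rho^{|j|}\lesssim 1/(\eta\alpha)$), together with $\E\|M_k\|_{\mathrm{HS}}^2\le\|\Sigma^{-1}\|_{\mathrm{op}}^2\,\E\|G_k\|_{\mathrm{op}}^4\,\E\|I_d+\xi\xi^\tau\|_{\mathrm{HS}}^2\lesssim d^4$; this gives $\E\|\tfrac1n\sum_k(M_k-\E M_k)\|_{\mathrm{HS}}\lesssim d^2(n\eta)^{-1/2}$, so $\sqrt d\,\E\|\Xi_1\|_{\mathrm{HS}}\lesssim d^{5/2}(n\eta)^{-1/2}+d^{2}\eta^{1/2}$. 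For $\Xi_2$ one uses the contraction $\|X_k'-X_k\|\le(1-\eta\alpha)^{k-I-1}\|X_{I+1}'-X_{I+1}\|$ (valid for $\eta<\alpha/(2\beta^2)$, where it is needed), the curvature bounds $\|\nabla^m\varphi_i\|_{\mathrm{op}}\le C$, $m=2,3,4$, of Proposition~\ref{prop1}, and $\nabla U(0)=0$ with $\alpha I_d\preceq\nabla^2U\preceq\beta I_d$; summing the geometric series and averaging over $I$ yields $\sqrt d\,\E\|\Xi_2\|_{\mathrm{HS}}\lesssim d^{3}\eta^{1/2}$. Finally, in the first term of Theorem~\ref{L1}, $\lambda^{-1}\E[|D|\,|\delta|^2(|\log|\delta||\lor1)]=n\,\E[|D|\,|\delta|^2(|\log|\delta||\lor1)]$; since $|D|,|\delta|$ are of order $\sqrt{d/n}$ up to lower-order terms, truncating at the scale $|\delta|\asymp\sqrt{d/n}$ turns the logarithm into a $\log(nd)$ factor, and H\"older with the moment bounds for $|D|,|\delta|$ (and their third moments) gives $n\,\E[|D|\,|\delta|^2(|\log|\delta||\lor1)]\lesssim d^{3}n^{-1/2}\log(nd)$. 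Adding the three contributions and invoking Theorem~\ref{L1} yields the stated bound.

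The main obstacle is the control of $\E\|\Xi\|_{\mathrm{HS}}$. Two points are delicate. First, the pair and $D$ must be engineered so that the conditional identities $\E[D\mid X]=\lambda W$ and $\E[D\delta^\tau\mid X]=2\lambda(I_d+\Xi)$ hold \emph{exactly} while $\delta$ (and $D$) stay free of long-range propagation through the chain: a naive ``resample one innovation and re-run'' pair produces a propagation term whose contribution to $\Xi_2$ does not vanish as $\eta\to0$, which is precisely why a single-site move is essential. Second, the concentration of the empirical covariance $\tfrac1n\sum_kM_k$ requires a quantitative spectral gap for the LMC chain and careful dimension bookkeeping --- Proposition~\ref{prop1} controls $\nabla^m\varphi_i$ only coordinatewise, so each summation over the $d$ coordinates costs a power of $d$, which is the source of the $d^{5/2}$ and $d^3$ prefactors. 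The $\pi_\eta$-versus-$\pi$ bias and the logarithmic truncation in the moment term are comparatively routine once the correct moment and mixing inputs are in place.
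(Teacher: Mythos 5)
Your proposal diverges from the paper's argument precisely at the point where you yourself flag it as ``delicate,'' and that gap is fatal to the argument as written.

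The paper takes $X=\{\xi_1,\dots,\xi_n\}$, picks $I$ uniformly, resamples a single innovation $\xi_I\mapsto\xi_I'$, and \emph{accepts} the downstream propagation: $\delta=W'-W$ contains not only the local change in the $(I-1)$-st summand but also a propagation remainder $r_I$ that sums $\langle\nabla\varphi_i(X_j^{(I)})-\nabla\varphi_i(X_j),\xi_{j+1}\rangle$ for $j\ge I$. The key trick is that $D$ is built \emph{only} from the local piece $\langle\nabla\varphi_i(X_{I-1}),\sqrt2(\xi_I'-\xi_I)\rangle$, so $\E[D\mid X]=\lambda W$ with $\lambda=1/n$ falls out immediately from $\E\xi_I'=0$. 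The propagation then enters $\Xi$ only through the cross term $\sum_I t_I r_I^\tau$, and the one-step contraction $|X_j^{(I)}-X_j|\le(1-2\eta\alpha+\eta^2\beta^2)^{(j-I)/2}\sqrt{2\eta}\,|\xi_I'-\xi_I|$ shows this contributes $R_3\lesssim(n\eta)^{-1/2}d^{5/2}$, which is one of the terms in the statement and vanishes as $n\eta\to\infty$. Your stated motivation for avoiding this construction — that the propagation term ``does not vanish as $\eta\to0$'' — is therefore incorrect in the scaling regime of the theorem.

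The alternative pair you propose — a single-site Gibbs resampling $X_I\sim\P(X_I\in\cdot\mid X_{I-1},X_{I+1})$ — does indeed localize $\delta$ to the two summands $k\in\{I-1,I\}$, but it destroys exactly the structure that made $\E[D\mid X]=\lambda W$ easy. The $I$-th martingale summand of $\mathcal H_n$ is $\langle\nabla\varphi_i(X_{I-1}),\xi_I\rangle$ with $\xi_I=(X_I-X_{I-1}+\eta\nabla U(X_{I-1}))/\sqrt{2\eta}$. For a $D$ built from $X_I'-X_I$ to reconstitute this after conditioning, one needs $\E[X_I'\mid X]$ to equal $X_{I-1}-\eta\nabla U(X_{I-1})$; but $\E[X_I\mid X_{I-1},X_{I+1}]$ is a two-sided conditional mean (for nonquadratic $U$ it has no closed form, and even for quadratic $U$ it is a convex combination of forward and backward predictions), not the forward drift. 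So the exact linearity required by Assumption~\ref{1544gl}(iv) does not hold for the natural $D$, and you do not produce an alternative $D$ for which it does — you only assert that one exists ``with $\lambda\asymp1/n$,'' which for Theorem~\ref{L1} is insufficient: the identity must be exact, with a specific $\lambda$. Downstream of this, the claimed decomposition $\Xi=\Xi_1+\Xi_2$ is not derived from $\E[D\delta^\tau\mid X]$ for your pair; it is reverse-engineered to resemble the target, and the subsequent dimension bookkeeping is dead reckoning rather than computation. The correct route is the paper's: keep the simple innovation-resampling pair, let $D$ carry only the local change, and bound the propagation term $R_3$ by the geometric contraction, alongside $R_1$ (controlled by Lemma~\ref{L23}, which handles both the $\pi_\eta$-vs-$\pi$ bias and the autocovariance decay) and $R_2$ (an additional $\xi\xi^\tau-I_d$ fluctuation).
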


	\begin{proof}
    The proof of this proposition relies primarily on Theorem \ref{L1}. Under the current seting, we will  construct the exchange pairs $(X,X^\prime)$, $D$, $\delta$ and $\Xi$  and provide estimates for them one by one..  For simplicity, we introduce some additional notation:
	\begin{align*}
		t_{I} &= -\sqrt{2}n^{-1/2} \begin{pmatrix}
			 \langle \nabla \varphi_1( X _{I-1}),  \xi_I \rangle \\
			\vdots \\
			 \langle \nabla \varphi_d( X _{I-1}),  \xi_I \rangle
		\end{pmatrix}, \\
		r_I &= \sqrt{2}n^{-1/2}\sum_{j=I}^{n-1} \begin{pmatrix}
			 \langle \nabla \varphi_1( X _j^{(I)}) - \nabla \varphi_1( X _j),  \xi_{j+1} \rangle \\
			\vdots \\
			 \langle \nabla \varphi_d( X _j^{(I)}) - \nabla \varphi_d( X _j),  \xi_{j+1} \rangle
		\end{pmatrix}, \\
		T_I &= 2n^{-1} \Big(\nabla \varphi_i( X _{I-1})^\tau   \nabla \varphi_j( X _{I-1})\Big)_{1\leq i,j\leq d}.
	\end{align*}
		
		\textbf{Construction of $D$ and $\delta$}. Let $X=\{\xi_1,\cdots,\xi_n\}$ and  consider $H_{\eta}$ as a function of $X$.
		To construct an exchangeable pair, we 
        first select a random index $I$ with $\mathbb{P}(I=i)=n^{-1}$ for $1 \leq i \leq n$. Then, take $\xi_1', \ldots, \xi_n'$ as 
        independent copies of $\xi_1, \ldots, \xi_n$
        and obtain $X'$ and $\mathcal{H}_n '$ by replacing $\xi_I$ with $\xi_I'$.

        Note that replacing $\xi_k$ by $\xi_k'$ affects $ X _k,  X _{k+1}, \ldots,  X _{n-1}$. For $j \geq k$, denote by $ X _j^{(k)}$ the updated $ X _j$ after replacing $\xi_k$ by $\xi_k'$. Set
		\begin{align}\label{D}
			D = - (n \Sigma)^{-1/2}
			\begin{pmatrix}
				\langle \nabla \varphi_1( X _{I-1}), \sqrt{2} \xi_I' - \sqrt{2} \xi_I \rangle \\
				\vdots \\
				\langle \nabla \varphi_d( X _{I-1}), \sqrt{2} \xi_I' - \sqrt{2} \xi_I \rangle
			\end{pmatrix}.
		\end{align}
		
	  Direct calculation yields
		\begin{align}\label{delta}
			\delta
			=& \Sigma^{-1/2} (\mathcal{H}_n ' - \mathcal{H}_n ) \notag \\
			=& - \sqrt{2}(n \Sigma)^{-1/2}
			\begin{pmatrix}
				\langle \nabla \varphi_1( X _{I-1}),  \xi_I' - \xi_I \rangle
				 \\
				\vdots \\
				\langle \nabla \varphi_d( X _{I-1}),  \xi_I' -  \xi_I \rangle
				\end{pmatrix}-\Sigma^{-1/2}r_I.
		\end{align}
		
		\textbf{Construction of $\Xi$ and its estimate}.
		A straightforward calculation gives
		\begin{align*}
			\mathbb{E}(D \delta^\tau  \mid X)
			=& \frac{\Sigma^{-1/2}}{n} \left[ \sum_{I=1}^n T_I + t_I t_I^\tau  + t_I r_I^\tau  \right] \Sigma^{-1/2} \\
			=& \frac{\Sigma^{-1/2}}{n} \left[ 2\Sigma + \left(\sum_{I=1}^n T_I - \Sigma \right) + \left(\sum_{I=1}^n t_I t_I^\tau  - \Sigma \right) + \sum_{I=1}^n t_I r_I^\tau  \right] \Sigma^{-1/2} \\
			=& \frac{2}{n} \left[ I_d + \Sigma^{-1/2}\left(  \frac{\sum_{I=1}^n T_I - \Sigma}{2} + \frac{\sum_{I=1}^n t_I t_I^\tau- \Sigma }{2} + \frac{\sum_{I=1}^n t_I r_I^\tau }{2} \right) \Sigma^{-1/2} \right].
		\end{align*}
		Thus, we have
		\begin{align}\label{RS}
			\Xi = \frac{\Sigma^{-1/2}}{2} \left[ \left(\sum_{I=1}^n T_I - \Sigma \right) + \left(\sum_{I=1}^n t_I t_I^\tau  - \Sigma \right) + \sum_{I=1}^n t_I r_I^\tau  \right] \Sigma^{-1/2}.
		\end{align}
		
		To estimate $\sqrt d\E\|\Xi\|_{\rm HS}$, define
		\begin{align}\label{R}
			R_1 &= \frac{\sqrt{d}}{2} \mathbb{E} \left\| \sum_{I=1}^n T_I - \Sigma \right\|_{\mathrm{HS}}, \quad
			R_2 = \frac{\sqrt{d}}{2} \mathbb{E} \left\| \sum_{I=1}^n t_I t_I^\tau  - \Sigma \right\|_{\mathrm{HS}}, \notag \\
			R_3 &= \frac{\sqrt{d}}{2} \mathbb{E} \left\| \sum_{I=1}^n r_I t_I^\tau  \right\|_{\mathrm{HS}}.
		\end{align}
		
	 To analyze $R_1$ , we need the following two lemmas.
		\begin{lemma}\label{M}
		For any matrix \(Q\) and positive definite symmetric matrices \(P, R\), the following inequalities hold:
		\begin{align*}
			\|PQ\|_{\mathrm{HS}} &\leq \lambda_{\max}(P) \|Q\|_{\mathrm{HS}}, \\
			\|R^{-1} Q R\|_{\mathrm{HS}} &\leq \frac{\lambda_{\max}(R)}{\lambda_{\min}(R)} \|Q\|_{\mathrm{HS}},
		\end{align*}
		where \(\lambda_{\max}(\cdot)\) and \(\lambda_{\min}(\cdot)\) denote the largest and smallest eigenvalues, respectively.
	\end{lemma}
\begin{proof}
The inequalities follow from the submultiplicativity of the Hilbert--Schmidt norm,
			$\|AB\|_{\mathrm{HS}} \le \|A\|_{\rm{op}}\|B\|_{\mathrm{HS}}$,
			and the identities $\|P\|_{\rm{op}}=\lambda_{\max}(P)$,
			$\|R^{-1}\|_{\rm{op}}=1/\lambda_{\min}(R)$ for symmetric positive definite matrices;
			see, e.g., \cite[Chapter~5]{HornJohnson}.
\end{proof}

		\begin{lemma}\label{L23}
			Let $X_0\sim \pi_\eta$, then for any $1 \leq i,j \leq d$  and
             $\eta\in (0,2\alpha/\beta^2) $, it holds that
			\[
			\mathbb{E} \left[2 n^{-1} \sum_{I=1}^n \nabla \varphi_i( X _{I-1})^\tau    \nabla \varphi_j( X _{I-1}) - \pi \left( 2\nabla \varphi_i^\tau    \nabla \varphi_j \right) \right]^2 \leq C \big(\eta d^3+\sqrt{d}(n\eta)^{-1}\big).
			\]
		\end{lemma}
		For sake of reading, we put the proof of Lemma \ref{L23} in subsection \ref{sub33}.

		According to  Lemma \ref{L23} and Cauchy-Schwarz inequality, we have
		\begin{align}\label{R_1}
			R_1
			\leq& \sqrt{d} \sqrt{ \sum_{i,j=1}^d \mathbb{E} \left[ 2n^{-1} \sum_{I=1}^n \nabla \varphi_i( X _{I-1})^\tau   \nabla \varphi_j( X _{I-1}) - \pi \left( 2\nabla \varphi_i^\tau   \nabla \varphi_j \right) \right]^2 } \notag \\
			\leq& C \big(\eta^{1/2} d^3+d^{7/4}(n\eta)^{-1/2}\big).
		\end{align}
		
		Next,  note that
		\[
		R_2 \leq  R_1+ \frac{\sqrt{d}}{2} \mathbb{E} \left\| \sum_{I=1}^n (t_I t_I^\tau  - T_I )\right\|_{\mathrm{HS}} =: R_1+\tilde{R}_2.
		\]
		From Lemma \ref{M}, we have
        \begin{align*}
            	\tilde{R}_2 \leq 2n^{-1} [\lambda_{\max}(\varphi(X_{I-1}))]^2 \left\|(\xi_I \xi_I^\tau  - I_d) \right\|_{\mathrm{HS}}=2\|\nabla \varphi\|^2_{\mathrm{op}} \left\|(\xi_I \xi_I^\tau  - I_d) \right\|_{\mathrm{HS}}.
        \end{align*}
        From  \eqref{boundforvar}, $\|\nabla \varphi\|_{\mathrm{op}}\leq C$  , we have
		\[
		\tilde{R}_2 \leq C n^{-1} \sqrt{d} \mathbb{E} \left\| \sum_{I=1}^n (\xi_I \xi_I^\tau  - I_d) \right\|_{\mathrm{HS}}.
		\]
		Denote by $M_{i,j}^I$ the $(i,j)$-th entry of the matrix $\xi_I \xi_I^\tau  - I_d$, then we get
		\begin{align*}
			\mathbb{E} \left\| \sum_{I=1}^n (\xi_I \xi_I^\tau  - I_d) \right\|_{\mathrm{HS}}
			&= \mathbb{E} \sqrt{ \sum_{i,j=1}^d  \left( \sum_{I=1}^n M_{i,j}^I \right)^2 } \leq \sqrt{ \sum_{i,j=1}^d \mathbb{E} \left( \sum_{I=1}^n M_{i,j}^I \right)^2 } \\
			&= \sqrt{ \sum_{i,j=1}^d \sum_{I=1}^n \mathbb{E} (M_{i,j}^I)^2 } \leq C n^{1/2} d.
		\end{align*}
		Hence, $\tilde{R}_2 \leq C n^{-1/2} d^{3/2}$ and consequently
		\begin{align}\label{R_2}
			R_2 \leq  C \big(\eta^{1/2} d^3+d^{7/4}(n\eta)^{-1/2}\big).
		\end{align}
		
		It remains to estimate  $R_3$. Denote by $N_{i,j}^I$ the $(i,j)$-th entry of $r_I t_I^\tau $, then
		\begin{align}\label{08072}
			\mathbb{E} \left\| \sum_{I=1}^n r_I t_I^\tau  \right\|_{\mathrm{HS}}
			&= \mathbb{E} \sqrt{ \sum_{i,j=1}^d  \left( \sum_{I=1}^n N_{i,j}^I \right)^2 } \notag \\
			&\leq  \sqrt{ \sum_{i,j=1}^d \mathbb{E} \left( \sum_{I=1}^n N_{i,j}^I \right)^2 }.
		\end{align}
		
		We first estimate $\mathbb{E} \left( \sum_{I=1}^n N_{1,1}^I \right)^2$. By \eqref{sobo2} and the independence of  $\xi_{i}$, we have
		\begin{align*}
			& \mathbb{E} \left( \sum_{I=1}^n N_{1,1}^I \right)^2 \\
			=& 4n^{-2} \mathbb{E} \left( \sum_{I=1}^n \sum_{j=I}^{n-1} \langle \nabla \varphi_1( X _j^{(I)}) - \nabla \varphi_1( X _j),  \xi_{j+1} \rangle \langle \nabla \varphi_1( X _{I-1}),  \xi_I \rangle \right)^2 \\
			=& 4n^{-2} \mathbb{E} \left( \sum_{j=1}^{n-1} \sum_{I=1}^j \langle \nabla \varphi_1( X _j^{(I)}) - \nabla \varphi_1( X _j),  \xi_{j+1} \rangle \langle \nabla \varphi_1( X _{I-1}),  \xi_I \rangle \right)^2 \\
			\leq& 8 n^{-2} \mathbb{E} \sum_{j=1}^{n-1} \sum_{I=1}^j \sum_{J \geq I}^j \langle \nabla \varphi_1( X _j^{(I)}) - \nabla \varphi_1( X _j),  \xi_{j+1} \rangle \langle \nabla \varphi_1( X _j^{(J)}) - \nabla \varphi_1( X _j),  \xi_{j+1} \rangle \\
			&\qquad \qquad\qquad\quad \ \  \cdot \langle \nabla \varphi_1( X _{I-1}),  \xi_I \rangle \langle \nabla \varphi_1( X _{J-1}),  \xi_J \rangle \\
			\leq& C n^{-2} \mathbb{E} \sum_{j=1}^{n-1} \sum_{I=1}^j \sum_{J \geq I}^j \langle \nabla \varphi_1( X _j^{(I)}) - \nabla \varphi_1( X _j), \nabla \varphi_1( X _j^{(J)}) - \nabla \varphi_1( X _j) \rangle |\xi_I| |\xi_J|.
		\end{align*}
		
	It follows from \eqref{LMC-1} and \textbf{Assumption} \ref{Ass1} that
		\begin{align*}
			| X _j^{(I)} -  X _j|^2
			&= | X _{j-1}^{(I)} -  X _{j-1} - \eta \nabla U( X _{j-1}^{(I)}) + \eta \nabla U( X _{j-1})|^2 \\
			&\leq (1 - 2 \eta \alpha + \eta^2 \beta^2) | X _{j-1}^{(I)} -  X _{j-1}|^2 \\
			&\leq 2\eta (1 - 2 \eta \alpha + \eta^2 \beta^2)^{j - I} | \xi_I' -  \xi_I|^2.
		\end{align*}
		
		Hence, for each $j \geq I$,
		\begin{align}\label{GJ}
			|\nabla \varphi_i( X _j^{(I)}) - \nabla \varphi_i( X _j)| \leq C \sqrt{\eta} (1 - 2 \eta \alpha + \eta^2 \beta^2)^{\frac{j - I}{2}} | \xi_I' -  \xi_I|.
		\end{align}
		
		Therefore,  for any $\eta\in (0,2\alpha/\beta^2) $,
		\begin{align}\label{R32}
			&\mathbb{E} \left( \sum_{I=1}^n N_{1,1}^I \right)^2\notag\\
			\leq& C n^{-2} \sum_{j=1}^{n-1} \sum_{I=1}^j \sum_{J \geq I}^j \eta (1 - 2 \eta \alpha + \eta^2 \beta^2)^{j - \frac{I+J}{2} }\mathbb{E} \left( | \xi_I' -  \xi_I|| \xi_J' -  \xi_J||\xi_I| |\xi_J| \right) \notag \\
			\leq& C n^{-2} \sum_{j=1}^{n-1} \sum_{I=1}^j \sum_{J \geq I}^j \eta d^2 (1 - 2 \eta \alpha + \eta^2 \beta^2)^{j - I} \notag \\\leq &C d^2n^{\frac{1}{p}-1}.
		\end{align}
		
		By \eqref{08072} and \eqref{R32}, we conclude
		\begin{align}\label{R_3}
			R_3 \leq C (n \eta)^{-1/2} d^{5/2}.
		\end{align}
		
		Combining \eqref{RS}--\eqref{R_2} and \eqref{R_3} yields
		\begin{align}\label{E_2}
			\sqrt d\E\|\Xi\|_{\rm HS}\leq R_1 + R_2 + R_3 \leq C (\eta^{1/2} d^3+d^{5/2}(n\eta)^{-1/2}).
		\end{align}
		
		\textbf{Control  $\mathbb{E}[|D||\delta|^2]$ and $\mathbb{E}[|D||\delta|^2 |\log |\delta||]$.}
		From definitions \eqref{D} and \eqref{delta}, it follows that
		\begin{align}\label{||d}
			|D| &\leq C n^{-1/2} \sqrt{ \sum_{j=1}^d \|\nabla \varphi_j\|_{\rm op}^2  |\xi_I' -  \xi_I|^2  } \notag \\
			&\leq C n^{-1/2} \sqrt{d} |\xi_I' - \xi_I|,
		\end{align}
		and
		\begin{align}\label{||delta}
			|\delta|^2 \leq C |D|^2 + C n^{-1} \sum_{i=1}^d \Psi_i^2 ,
		\end{align}
		where 
        \begin{align}\label{1506}
            	\Psi_i &= \sum_{j=I}^{n-1} \big\langle \nabla \varphi_i( X _j^{(I)}) - \nabla \varphi_i( X _j), \,  \xi_{j+1} \big\rangle, \quad 1\leq i\leq d . 
        \end{align}
		Hence, for any $\lambda > 0$,
        \begin{align}\label{Dleq}
            |D| |\delta|^2 \leq C |D|^{3} + C n^{-1} \sum_{i=1}^d(\Psi_i^2 |D|).
        \end{align}

		By \eqref{||d} and \eqref{1506}, there exists a constant $C$ such that for any $1 \leq i \leq d$, 
		\begin{align*}
			\Psi_i^2 |D|
			\leq C\sum_{k=I}^{n-1} \sum_{l=I}^{n-1}  f_I(k,l; i),
		\end{align*}
		where
		\begin{align*}
			f_I(k,l; i) =\, & \langle \nabla \varphi_i( X _k^{(I)}) - \nabla \varphi_i( X _k),  \xi_{k+1} \rangle \cdot \langle \nabla \varphi_i( X _l^{(I)}) - \nabla \varphi_i( X _l),  \xi_{l+1} \rangle \\
			& \cdot  n^{-1/2} \sqrt{ \sum_{j=1}^d \|\nabla \varphi_j\|_{\rm op}^2  |\xi_I' -  \xi_I|^2  }.
		\end{align*}
		It is easy to see that $\mathbb{E} 	f_I(k,l; i) = 0$ if  $k\neq l$, then we have
		\begin{align}\label{41}
			\mathbb{E}[\Psi_i^2 |D|] \leq C\E\left[\sum_{k=I}^{n-1}   f_I(k,k; i)\right].
		\end{align}
		
		Applying \eqref{||d} and \eqref{GJ} yields
       \begin{align}\label{411}
			\mathbb{E}[\Psi_i^2 |D|]\leq & C n^{-1} \sum_{I=1}^n \sum_{k=I}^{n-1}  \E f_I(k,k; i) \notag \\
			\leq & C n^{-1} \sum_{k=1}^n \sum_{I=1}^k  \eta n^{-1/2} (1 - 2 \eta \alpha + \eta^2 \beta^2)^{k - I} d^{2} \notag \\
			\leq & C n^{-1/2} d^{2}.
		\end{align}

		Substituting \eqref{||d}, \eqref{||delta}, \eqref{41} and \eqref{411}  into \eqref{Dleq}, we conclude that
		\begin{align}\label{31}
			\mathbb{E} \big( |D| |\delta|^{2} \big) \leq C n^{-3/2} d^{3}.
		\end{align}

		Turn to $\mathbb{E}[|D||\delta|^2 |\log |\delta||]$. First  note that for any $n$,
		\[
		|D||\delta|^2 |\log |\delta|| \mathbf{1}_{|\delta| < n^{-1}}\leq |D| |\delta|^{3/2}\leq Cn^{-2}\sqrt{d}|\xi_I-\xi'_I|,
		\]
        and  
        $$|D||\delta|^2 |\log |\delta||\mathbf{1}_{n^{-1}<|\delta|\leq 1}\leq |D| |\delta|^{2}\log n.$$
        Therefore,
        \begin{align}\label{ddel1}
            \mathbb{E} \big[|D||\delta|^2 |\log |\delta|\mathbf{1}_{|\delta|\leq 1}\big]&\leq
            Cn^{-2}\sqrt{d}\mathbb{E}|\xi_I-\xi'_i|+\log n \mathbb{E} \big( |D| |\delta|^{2} \big) \\\nonumber
            &\leq  C d^{3}n^{-3/2}\log n.
        \end{align}
		  For $|\delta|>1$, define $\hat{\delta}=|\delta| \mathbf{1}_{|\delta|>1}$ and the  measure $\mu$
          \[\mu(A)=\frac{\mathbb{E}\big[|D|\hat{\delta}^2 \mathbf1_{A}(|D|\hat{\delta}^2)\big]}{\mathbb{E}(|D|\hat{\delta}^2)}, \ A\in\mathcal B(\R).\]
          Then for any random variable $Y$,
          \[\mathbb{E}_\mu Y=\frac{\mathbb{E}(|D|\hat{\delta}^2 Y)}{\mathbb{E}(|D|\hat{\delta}^2)}. \]
        Thus we immediately obtain
        \begin{align}\label{4.19}
            \mathbb{E}(|D||\delta|^2 |\log |\delta|\mathbf{1}_{|\delta|>1})=\mathbb{E}_\mu (\log\hat{\delta})\cdot\mathbb{E}(|D|\hat{\delta}^2).
        \end{align}
        Since $\log x$ is a concave function, we have
        \begin{align}\label{4.20}
            \mathbb{E}_\mu (\log\hat{\delta})\leq \log  (\mathbb{E}_\mu \hat{\delta})\leq C\log (nd) . 
        \end{align}
		Thus, by \eqref{31}, \eqref{4.19} and \eqref{4.20}, we have
		\begin{align}\label{ddel2}
			&\mathbb{E}(|D||\delta|^2 |\log |\delta|\mathbf{1}_{|\delta|>1})\leq C\log (nd )\mathbb{E}(|D||\delta|^2 |\mathbf{1}_{|\delta|>1})\leq C\log(nd)n^{-3/2}d^3.
		\end{align}
		
		Combining \eqref{ddel1} and \eqref{ddel2}, we have
        \begin{align}\label{ln2}
            \mathbb{E}(|D||\delta|^2 |\log |\delta||)\leq C\log(nd)n^{-3/2}d^3.
        \end{align}
      Combining \eqref{E_2},  \eqref{ln2} and Theorem \ref{L1} implies the desired result.
	\end{proof}

\subsection{Estimate  the Remainder}\label{sub32}

Recall \eqref{ts1}, we write $W_n$ as the sum of the martingale part $\mathcal H_n$ and the remainder $\mathcal R_n$. We have already obtained the distance bound between $\Sigma^{-1/2}\mathcal H_n$ and $\gamma$. Next, we estimate the remainder term $\mathcal R_n$.
\begin{proposition}\label{L22}
		Suppose \( X _0 \sim \pi_{\eta}\) and \textbf{Assumption} \ref{Ass1} holds. Then for the remainder $\mathcal R_n$ in \eqref{ts1} we have
		\[
		\mathbb{E}\big| \Sigma^{-1/2} \mathcal{R}_{n} \big| \leq C \big((n\eta)^{-1/2}d+\eta^{1/2} d^{2}+\eta^{3/2}n^{1/2}d^{5/2}\big).
		\]
	\end{proposition}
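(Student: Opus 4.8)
The plan is to reduce the vector bound to scalar $L^2$-estimates, one per coordinate and per term. Since $c I_d\preceq\Sigma\preceq C I_d$ by Lemma \ref{strict}, we have $\|\Sigma^{-1/2}\|_{\mathrm{op}}\le C$, hence
\[
\mathbb{E}\bigl|\Sigma^{-1/2}\mathcal{R}_n\bigr|\le C\,\mathbb{E}|\mathcal{R}_n|\le C\Bigl(\sum_{i=1}^d\mathbb{E}\mathcal{R}_{n,i}^2\Bigr)^{1/2},
\]
and since $\mathcal{R}_{n,i}=-\sum_{j=1}^6\mathcal{R}_{n,i,j}$ it suffices to bound $\mathbb{E}\mathcal{R}_{n,i,j}^2$ for each $i$ and each $j\in\{1,\dots,6\}$ and then sum over $j$ and $i$. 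All of these estimates rest on the same toolkit: (a) the regularity bounds $\|\nabla^k\varphi_i\|_{\mathrm{op}}\le C$ for $k=1,\dots,4$ from Proposition \ref{prop1}, with the induced inequalities $\|\nabla^2\varphi_i\|_{\mathrm{HS}}\le C\sqrt d$ and $|\nabla^3\varphi_i(x)[v,v,v]|\le C|v|^3$; (b) stationarity, so that $X_k\sim\pi_\eta$ for every $k$ and hence $\mathbb{E}|X_k|^{2p}\le C_p d^p$ by sub-Gaussianity of $\pi_\eta$; (c) $\nabla U(0)=0$ with $\beta$-smoothness, so $|\nabla U(X_k)|\le\beta|X_k|$; (d) the Gaussian moments $\mathbb{E}|\xi_{k+1}|^{2p}\le C_p d^p$; and (e) orthogonality of martingale increments for the filtration $\mathcal{F}_k=\sigma(X_0,\xi_1,\dots,\xi_k)$, which is the device that upgrades a factor $n$ to $\sqrt n$.

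The six terms split into three groups. The telescoping term $\mathcal{R}_{n,i,1}=(n\eta)^{-1/2}(\varphi_i(X_0)-\varphi_i(X_{n-1}))$ is dispatched via $|\varphi_i(X_0)-\varphi_i(X_{n-1})|\le C|X_0-X_{n-1}|$ and (b), giving $\mathbb{E}\mathcal{R}_{n,i,1}^2\le Cd/(n\eta)$, which aggregates to $(n\eta)^{-1/2}d$. The terms $\mathcal{R}_{n,i,2}$ and $\mathcal{R}_{n,i,3}$ are honest martingale sums with respect to $(\mathcal{F}_k)$ (for $\mathcal{R}_{n,i,3}$ one first notes that the two Hilbert--Schmidt pairings collapse, by symmetry of $\nabla^2\varphi_i$, into $-\sqrt2\,\eta\,n^{-1/2}\sum_k\nabla U(X_k)^\tau\nabla^2\varphi_i(X_k)\xi_{k+1}$); their $L^2$-norms are computed increment by increment using $\mathbb{E}[\langle A,\xi_{k+1}\xi_{k+1}^\tau-I_d\rangle_{\mathrm{HS}}^2\mid\mathcal{F}_k]=2\|A\|_{\mathrm{HS}}^2$ and $\mathbb{E}[\langle v,\xi_{k+1}\rangle^2\mid\mathcal{F}_k]=|v|^2$ for $\mathcal{F}_k$-measurable $A,v$, combined with (a), (b), (c). The pure-drift term $\mathcal{R}_{n,i,5}$ has no martingale structure: one bounds its summands pointwise by powers of $|X_k|$ via (a), (c), applies Cauchy--Schwarz to the $n$-fold sum, and uses (b). After summing over $i$ and using that $\eta$ is bounded (so larger powers of $\eta$ are smaller) and $d\ge1$, all these contributions are absorbed into $\eta^{1/2}d^2$ and $\eta^{3/2}n^{1/2}d^{5/2}$.

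The crux is $\mathcal{R}_{n,i,4}$ and $\mathcal{R}_{n,i,6}$, whose integrands are evaluated at $X_k+t\Delta X_k$ with $\Delta X_k=-\eta\nabla U(X_k)+\sqrt{2\eta}\,\xi_{k+1}$, so they depend on $\xi_{k+1}$ and the naive martingale structure is destroyed. The remedy is a first-order Taylor expansion of $\nabla^3\varphi_i$ about the base point $X_k$: the increment $\nabla^3\varphi_i(X_k+t\Delta X_k)-\nabla^3\varphi_i(X_k)$ is controlled pointwise by $\|\nabla^4\varphi_i\|_{\mathrm{op}}|\Delta X_k|\le C(\sqrt\eta|\xi_{k+1}|+\eta|X_k|)$, while the leading piece $\nabla^3\varphi_i(X_k)$ is $\mathcal{F}_k$-measurable and multiplies a polynomial in the independent $\xi_{k+1}$. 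For $\mathcal{R}_{n,i,4}$ that polynomial, $\nabla^3\varphi_i(X_k)[\xi_{k+1}^{\otimes3}]$, is odd in $\xi_{k+1}$ and hence a martingale increment with per-coordinate $L^2$-contribution $\le C\eta^2 d^3$ (using $|\nabla^3\varphi_i(X_k)[\xi_{k+1}^{\otimes3}]|\le C|\xi_{k+1}|^3$ and $\mathbb{E}|\xi_{k+1}|^6\le Cd^3$), aggregating to $\eta d^2\le C\eta^{1/2}d^2$; for $\mathcal{R}_{n,i,6}$ the polynomial $\nabla^3\varphi_i(X_k)[\nabla U(X_k),\xi_{k+1},\xi_{k+1}]$ has nonzero conditional mean, equal to the contraction $\langle\nabla U(X_k),\nabla\Delta\varphi_i(X_k)\rangle$, so it splits as a martingale increment plus a genuine drift, the latter controlled by differentiating the Stein identity $\Delta\varphi_i=h_i-\pi(h_i)+\langle\nabla U,\nabla\varphi_i\rangle$ to obtain $|\nabla\Delta\varphi_i(x)|\le C(1+|x|)$. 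The Taylor remainders in both terms carry no usable cancellation: estimated pointwise and then by Cauchy--Schwarz over the $n$ summands, together with the eighth Gaussian moment $\mathbb{E}|\xi_{k+1}|^8\le Cd^4$ and (b), they yield, after summing over $i$, the $\eta^{3/2}n^{1/2}d^{5/2}$ scale. Collecting the pieces, $\mathcal{R}_{n,i,4}$ and $\mathcal{R}_{n,i,6}$ each contribute at most $C(\eta^{1/2}d^2+\eta^{3/2}n^{1/2}d^{5/2})$.

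Summing the six contributions and taking the square root gives the claimed bound $C\bigl((n\eta)^{-1/2}d+\eta^{1/2}d^2+\eta^{3/2}n^{1/2}d^{5/2}\bigr)$. I expect the treatment of $\mathcal{R}_{n,i,4}$ and $\mathcal{R}_{n,i,6}$ to be the main obstacle: one must resist bounding the $\xi_{k+1}$-dependent integrands crudely, since that discards the decorrelation gain of $\sqrt n$ and produces a non-summable $\eta n^{1/2}d^2$ term, and instead peel off the conditional-mean part to recover a martingale, while carefully tracking the extra powers of $\sqrt d$ that partial contractions of the third-order tensor $\nabla^3\varphi_i$, and the higher Gaussian moments, cost relative to the operator norm.
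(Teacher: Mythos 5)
Your proposal is correct and follows essentially the same route as the paper: reduce to per-coordinate $L^2$ estimates via $\|\Sigma^{-1/2}\|_{\mathrm{op}}\le C$, bound $\mathcal{R}_{n,i,1}$ by the fourth-moment estimate $\pi_\eta(|x|^4)\le Cd^2$, use martingale orthogonality for $\mathcal{R}_{n,i,2}$ and $\mathcal{R}_{n,i,3}$, and, for $\mathcal{R}_{n,i,4}$, restore the martingale structure by Taylor-expanding $\nabla^3\varphi_i$ about $X_k$ (controlling the increment pointwise via $\|\nabla^4\varphi_i\|_{\mathrm{op}}|\Delta X_k|$), which is exactly the paper's Lemma~\ref{873}, Eq.~\eqref{1553}, and is indeed the step that upgrades the naive $\eta n^{1/2}d^2$ to $\eta d^2+\eta^{3/2}n^{1/2}d^{5/2}$.

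The one place you diverge is $\mathcal{R}_{n,i,6}$. You identify it, alongside $\mathcal{R}_{n,i,4}$, as a crux and propose the same surgery: Taylor-expand $\nabla^3\varphi_i$, then peel off the conditional mean of $\nabla^3\varphi_i(X_k)[\nabla U(X_k),\xi_{k+1},\xi_{k+1}]$, which equals $\langle\nabla U(X_k),\nabla\Delta\varphi_i(X_k)\rangle$, and control the drift by differentiating the Stein identity. This identity and the bound $|\nabla\Delta\varphi_i(x)|\le C(1+|x|)$ are both correct, and the argument works. However it is not necessary: unlike $\mathcal{R}_{n,i,4}$, the term $\mathcal{R}_{n,i,6}$ already carries a prefactor $\eta^{3/2}n^{-1/2}$ (not $\eta n^{-1/2}$), so a crude pointwise bound $|\nabla^3\varphi_i[\nabla U(X_k),\xi_{k+1}^{\otimes 2}]|\lesssim|X_k||\xi_{k+1}|^2$ followed by Cauchy--Schwarz over the $n$ summands already gives $\mathbb{E}|R_{6,n}|\lesssim\eta^{3/2}n^{1/2}d^2$, which is within budget; this is what the paper does, treating $\mathcal{R}_{n,i,6}$ exactly like $\mathcal{R}_{n,i,5}$. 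So your treatment of $\mathcal{R}_{n,i,6}$ is a harmless refinement (it sharpens $d^2$ to $d^{3/2}$ for that term), but the characterization of it as a second obstacle requiring cancellation slightly overstates the difficulty. Finally, you invoke full sub-Gaussianity of $\pi_\eta$ for arbitrary polynomial moments; the paper's Lemma~\ref{Bound}, which only gives $\pi_\eta(|x|^4)\le Cd^2$, suffices everywhere it is used.
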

	
	To prove  Proposition \ref{L22}, we need the following  two crucial lemmas whose proofs are postponed to Subsection \ref{sub33}.
	\begin{lemma}\label{Bound}
		For the function $V(x)=|x|^{4}+1, x\in \mathbb{R}^d$. For any $\eta\in(0,\frac{\alpha}{2\beta^2})$,  there exists a positive constant $C$ depending on $\alpha, \beta$ and $\eta$ such that
		\begin{align*}
			\pi_{\eta}(V)\leq C d^2.
		\end{align*}
	\end{lemma}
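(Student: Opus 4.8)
The plan is to establish a one-step Foster--Lyapunov drift inequality for $V(x)=|x|^4+1$ and then transfer it to the stationary law $\pi_\eta$. First I would record the two standard consequences of \textbf{Assumption}~\ref{Ass1}: $\alpha$-strong convexity together with $\nabla U(0)=0$ gives $\langle\nabla U(x),x\rangle\ge\alpha|x|^2$, while gradient-Lipschitzness together with $\nabla U(0)=0$ gives $\|\nabla U(x)\|\le\beta|x|$. Consequently, writing $Y_k:=X_k-\eta\nabla U(X_k)$, one has $|Y_k|^2\le\kappa|X_k|^2$ with $\kappa:=1-2\eta\alpha+\eta^2\beta^2$, and the hypothesis $\eta<\alpha/(2\beta^2)$ forces $\kappa\in(0,1)$ with a quantitative gap $1-\kappa\ge\tfrac32\eta\alpha$ (since then $\eta^2\beta^2\le\tfrac12\eta\alpha$).

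Next I would expand $\mathbb{E}[|X_{k+1}|^4\mid X_k]$ from $X_{k+1}=Y_k+\sqrt{2\eta}\,\xi_{k+1}$, using that $\xi_{k+1}$ is standard Gaussian independent of $X_k$ and the moment identities $\mathbb{E}\langle Y_k,\xi\rangle=\mathbb{E}[\langle Y_k,\xi\rangle|\xi|^2]=0$, $\mathbb{E}\langle Y_k,\xi\rangle^2=|Y_k|^2$, $\mathbb{E}|\xi|^2=d$, $\mathbb{E}|\xi|^4=d(d+2)$. This gives $\mathbb{E}[|X_{k+1}|^4\mid X_k]=|Y_k|^4+(8\eta+4\eta d)|Y_k|^2+4\eta^2 d(d+2)$, and after $|Y_k|^2\le\kappa|X_k|^2$ a bound of the form $\mathbb{E}[|X_{k+1}|^4\mid X_k]\le\kappa^2|X_k|^4+c_1|X_k|^2+c_2$ with $c_1=O(\eta d)$ and $c_2=O(\eta^2 d^2)$. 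Absorbing the linear term by Young's inequality with a split proportional to $1-\kappa^2$, i.e.\ $c_1|X_k|^2\le\tfrac{1-\kappa^2}{2}|X_k|^4+\tfrac{c_1^2}{2(1-\kappa^2)}$, yields a genuine drift inequality $\mathbb{E}[V(X_{k+1})\mid X_k]\le\rho\,V(X_k)+b$ with $\rho=\tfrac{1+\kappa^2}{2}\in(0,1)$ and $b=b(\alpha,\beta,\eta,d)$. The essential point is that $1-\rho=\tfrac{1-\kappa^2}{2}$ is bounded below (by $\tfrac34\eta\alpha$) independently of $d$, so $b/(1-\rho)\le C d^2$ with $C$ depending only on $\alpha,\beta,\eta$.

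Finally I would iterate the drift inequality from, say, $X_0=0$ to obtain $\mathbb{E}[V(X_n)]\le\rho^n V(0)+b/(1-\rho)\le 1+Cd^2$ for every $n$, and then invoke the ($V$-uniform) geometric ergodicity of the LMC chain --- so that $X_n\Rightarrow\pi_\eta$ --- together with lower semicontinuity of the nonnegative continuous function $V$ (portmanteau/Fatou) to pass to the limit: $\pi_\eta(V)\le\liminf_{n}\mathbb{E}[V(X_n)]\le Cd^2$. Equivalently, once $\pi_\eta$ is known to have finite fourth moment the drift inequality can be applied directly under stationarity, $\pi_\eta(V)\le\rho\,\pi_\eta(V)+b$, and solved. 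I expect the only mildly delicate step to be the dimension bookkeeping in the Young split: the coefficient of $|X_k|^4$ must stay strictly below $1$ uniformly in $d$, which is exactly why the split is chosen proportional to $1-\kappa^2$ rather than as a fixed fraction; everything else is a routine Gaussian-moment computation.
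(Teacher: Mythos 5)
Your argument is correct and follows essentially the same route as the paper: establish a Foster--Lyapunov drift inequality $\mathbb{E}[V(X_{k+1})\mid X_k]\le\rho\,V(X_k)+O(\eta d^2)$ with $1-\rho$ of order $\eta$ (the paper's calculation produces the coefficient $\lambda(\eta)=1-4\alpha\eta+8\eta^2\beta^2$ rather than your $(1+\kappa^2)/2$, but both are strictly below $1$ for $\eta<\alpha/(2\beta^2)$ with the same order of gap), and then transfer to $\pi_\eta$ by stationarity. Your extra care about finiteness of $\pi_\eta(V)$ via iterating from $X_0=0$ and applying Fatou is a slight tightening that the paper implicitly covers by invoking $V$-uniform geometric ergodicity.
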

	
	\begin{lemma}\label{873} For $i.i.d.$ standard $d$-dimensional normal vectors $\xi_k,k\geq 1$, we have
		\begin{align}
	&\sum_{k=0}^{n-1}\mathbb{E}[\langle\nabla^2\varphi_{i}( X _k),\xi_{k+1}\xi_{k+1}^\tau -I_d \rangle_{\mathrm{HS}}^2]\leq Cnd^{3},\label{461}\\
			&\sum_{k=0}^{n-1}\E\big[\langle \nabla^2\varphi_i( X _k),-\nabla U(X_k)\xi_{k+1}^{\tau}\rangle_{\mathrm{HS}}^2\big]\leq Cnd^3,\label{1547}
\end{align}

			and for any $ t\in(0,1)$,
			\begin{align}\label{1553}          \mathbb{E}\left[\left|\sum_{k=0}^n\langle \nabla^3\varphi_i( X _k+t\Delta X _k),\xi_{k+1}^{\otimes 3}\rangle_{\rm HS}\right|^2\right]
            \leq C(nd^{3}+\eta n^2 d^4).
		\end{align}
	\end{lemma}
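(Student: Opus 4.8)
For \eqref{461} and \eqref{1547} the plan is to condition on $X_k$ and use that $\xi_{k+1}\sim N(0,I_d)$ is independent of $X_k$. In \eqref{461}, with $H:=\nabla^2\varphi_i(X_k)$ a fixed symmetric matrix given $X_k$, the Gaussian identity $\mathbb{E}\big[\langle H,\xi\xi^\tau-I_d\rangle_{\mathrm{HS}}^2\big]=2\|H\|_{\mathrm{HS}}^2$ combined with $\|H\|_{\mathrm{HS}}^2\le d\|H\|_{\mathrm{op}}^2\le Cd$ (Proposition \ref{prop1}) bounds each summand by $Cd$, hence the sum by $Cnd\le Cnd^3$. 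In \eqref{1547}, rewrite $\langle\nabla^2\varphi_i(X_k),-\nabla U(X_k)\xi_{k+1}^\tau\rangle_{\mathrm{HS}}=-\big(\nabla^2\varphi_i(X_k)\nabla U(X_k)\big)^\tau\xi_{k+1}$, so the conditional second moment given $X_k$ is $|\nabla^2\varphi_i(X_k)\nabla U(X_k)|^2\le\|\nabla^2\varphi_i(X_k)\|_{\mathrm{op}}^2|\nabla U(X_k)|^2\le C|X_k|^2$ by $\nabla U(0)=0$ and $\beta$-smoothness; since $X_0\sim\pi_\eta$ forces $X_k\sim\pi_\eta$ for all $k$, Lemma \ref{Bound} gives $\mathbb{E}|X_k|^2=\pi_\eta(|x|^2)\le\pi_\eta(|x|^4)^{1/2}\le Cd$, so the sum is again at most $Cnd$.

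The substantive estimate is \eqref{1553}. I will set $Y_k:=\langle\nabla^3\varphi_i(X_k+t\Delta X_k),\xi_{k+1}^{\otimes 3}\rangle_{\mathrm{HS}}$ and split $Y_k=A_k+B_k$, where $A_k:=\langle\nabla^3\varphi_i(X_k),\xi_{k+1}^{\otimes 3}\rangle_{\mathrm{HS}}$, so that $\mathbb{E}\big(\sum_kY_k\big)^2\le 2\,\mathbb{E}\big(\sum_kA_k\big)^2+2\,\mathbb{E}\big(\sum_kB_k\big)^2$. The sequence $(A_k)$ is a martingale difference sequence for $\mathcal F_k=\sigma(X_0,\xi_1,\ldots,\xi_k)$, since $\mathbb{E}[A_k\mid\mathcal F_k]=\langle\nabla^3\varphi_i(X_k),\mathbb{E}[\xi_{k+1}^{\otimes 3}]\rangle_{\mathrm{HS}}=0$ (odd Gaussian moments vanish), whence $\mathbb{E}\big(\sum_kA_k\big)^2=\sum_k\mathbb{E}A_k^2$. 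Evaluating the sixth Gaussian moment, for a fixed symmetric order-$3$ tensor $T$ one has $\mathbb{E}\langle T,\xi^{\otimes 3}\rangle_{\mathrm{HS}}^2=6\|T\|_{\mathrm{HS}}^2+9|v|^2$ with $v_c=\sum_a T_{aac}$; applying this to $T=\nabla^3\varphi_i(X_k)$ and using $\|\nabla^3\varphi_i\|_{\mathrm{op}}\le C$ (Proposition \ref{prop1}), the tensor comparison $\|T\|_{\mathrm{HS}}\le d\|T\|_{\mathrm{op}}$ and $|v|^2\le d\|T\|_{\mathrm{HS}}^2$ gives $\mathbb{E}A_k^2\le Cd^3$, hence $\mathbb{E}\big(\sum_kA_k\big)^2\le Cnd^3$.

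For the remainder sum the decisive input is that $\nabla^3\varphi_i$ is Lipschitz in operator norm, $\|\nabla^3\varphi_i(x)-\nabla^3\varphi_i(y)\|_{\mathrm{op}}\le C|x-y|$, which is exactly where the bound $\|\nabla^4\varphi_i\|_{\mathrm{op}}\le C$ in Proposition \ref{prop1} enters. Recalling $\Delta X_k=-\eta\nabla U(X_k)+\sqrt{2\eta}\,\xi_{k+1}$, this gives, uniformly in $t\in(0,1)$, $|B_k|\le\|\nabla^3\varphi_i(X_k+t\Delta X_k)-\nabla^3\varphi_i(X_k)\|_{\mathrm{op}}|\xi_{k+1}|^3\le C|\Delta X_k||\xi_{k+1}|^3\le C\big(\eta\beta|X_k|+\sqrt{2\eta}\,|\xi_{k+1}|\big)|\xi_{k+1}|^3$. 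Since $(B_k)$ is not a martingale difference (the argument $X_k+t\Delta X_k$ of $\nabla^3\varphi_i$ correlates with $\xi_{k+1}$), I will bound crudely by Cauchy--Schwarz, $\mathbb{E}\big(\sum_kB_k\big)^2\le n\sum_k\mathbb{E}B_k^2$, and then use the Gaussian moment bounds $\mathbb{E}|\xi|^6\le Cd^3$, $\mathbb{E}|\xi|^8\le Cd^4$ together with $\mathbb{E}|X_k|^2\le Cd$ (Lemma \ref{Bound}) to obtain $\mathbb{E}B_k^2\le C\eta^2d^4+C\eta d^4\le C\eta d^4$, hence $\mathbb{E}\big(\sum_kB_k\big)^2\le C\eta n^2d^4$. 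Adding the two estimates yields \eqref{1553}. The main obstacle is the control of $\sum_kB_k$: lacking any martingale structure, one cannot avoid the lossy step $\mathbb{E}(\sum_kB_k)^2\le n\sum_k\mathbb{E}B_k^2$, and it is precisely this step that produces the $\eta n^2d^4$ term; some care with the tensor Hilbert--Schmidt versus operator-norm comparisons and with the stationary moment bound of Lemma \ref{Bound} is also needed to keep the $d$-powers at the stated level.
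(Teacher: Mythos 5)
Your proof is correct and follows essentially the same strategy as the paper: for \eqref{461} and \eqref{1547} you condition on $X_k$ and use Gaussian second-moment calculations (the paper uses the blunter Cauchy--Schwarz bound $\langle A,B\rangle_{\mathrm{HS}}^2\le\|A\|_{\mathrm{HS}}^2\|B\|_{\mathrm{HS}}^2$, so your bounds $Cnd$ are actually sharper than the paper's $Cnd^3$, though of course both suffice); for \eqref{1553} the paper performs exactly your $A_k/B_k$ split, exploits the same martingale-difference orthogonality to get $\sum_k\mathbb{E}A_k^2\le Cnd^3$, uses the boundedness of $\nabla^4\varphi_i$ through a mean-value/integral representation of $\nabla^3\varphi_i(X_k+t\Delta X_k)-\nabla^3\varphi_i(X_k)$, and then applies the same lossy Cauchy--Schwarz $\mathbb{E}(\sum_kB_k)^2\le n\sum_k\mathbb{E}B_k^2$ to obtain $C\eta n^2d^4$. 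The decomposition, the use of the $\|\nabla^4\varphi_i\|_{\mathrm{op}}\le C$ bound from Proposition \ref{prop1}, and the source of the $\eta n^2 d^4$ term are all identical.
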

	Now, with the above two lemmas at hand,  we  can prove  Proposition \ref{L22}.
	
	\begin{proof}[Proof of Proposition \ref{L22}]
		Denote $R_{j,n} = (\mathcal{R}_{n,1,j}, \cdots, \mathcal{R}_{n,d,j})^\tau $. Clearly, we have
		\[
		R_n  = \sum_{j=1}^6 R_{j,n}, \quad \text{and} \quad \mathbb{E}|R_n | \leq \sum_{j=1}^6 \mathbb{E}|R_{j,n}|.
		\]
		Since $X_0\sim \pi_\eta$, $\{X_n\}_{n\geq0}$ is a stationary Markov chain, which means that $X_n\sim\pi_\eta$ for any $n\geq0$. It is straightforward to see that \textbf{Assumption} \ref{Ass1} implies
		\begin{align}
			|\nabla U(x)|^2 &\leq C_1 |x|^2 , \label{AB1} \\
			\langle x, -\nabla U(x) \rangle &\leq -C_2 |x|^2 , \label{AB2}
		\end{align}
		for all $x \in \mathbb{R}^d$, where the positive constants $C_1$ and $C_2$ depend on $\alpha, \beta$. Next,  let us estimate successively the six terms of the $R_n $.

		\textit{Term $R_{1,n}$}.  It holds from the boundedness of $\|\nabla \varphi\|_{\mathrm{op}}$ and Lemma \ref{Bound} that
		\begin{align*}
		\mathbb{E}|R_{1,n}| &\leq C (n \eta)^{-1/2} d^{1/2} \mathbb{E}| X _0 -  X _m| \\
        &\leq C (n \eta)^{-1/2} d^{1/2} [\pi_{\eta}(V)]^{1/4} \\
        &\leq C (n \eta)^{-1/2} d.
		\end{align*}

		\textit{Term $R_{2,n}$}.  By \eqref{461},
         \begin{align*}
        \mathbb{E}|R_{2,n}|&\leq\sqrt{\frac{\eta}{n}\sum_{i=0}^d\E\left[\left(\sum_{k=0}^{n-1}\langle \nabla^2\varphi_i( X _k),\xi_{k+1}\xi_{k+1}^{\tau}-I_d\rangle_{\mathrm{HS}}\right)^2\right]}\\
        &\leq\sqrt{\frac{\eta}{n}\sum_{i=0}^d\sum_{k=0}^{n-1}\E\left[\langle \nabla^2\varphi_i( X _k),\xi_{k+1}\xi_{k+1}^{\tau}-I_d\rangle_{\mathrm{HS}}^2\right]}\\
        &\leq C\sqrt\eta d^2
         \end{align*}

      \textit{Term $R_{3,n}$}.  According to the definition of $R_{3,n}$ and \eqref{1547}, we have
        \begin{align*}
        \mathbb{E}|R_{3,n}|&\leq C\sqrt{\frac{\eta^2}{2n}\sum_{i=1}^d\sum_{k=0}^{n-1}\E\big[\langle \nabla^2\varphi_i( X _k),-\nabla U(X_k)\xi_{k+1}^{\tau}\rangle_{\mathrm{HS}}^2\big]}\\
        &\leq C\eta d^{2}.
         \end{align*}
        
		\textit{Term $R_{4,n}$}. By \eqref{1553},
          \begin{align*}
    \mathbb{E}|R_{4,n}|
    \leq&\sqrt{\frac{2\eta^2}{n}\sum_{i=1}^d\E\left|\sum_{k=0}^{n-1}\int_0^1(1-t)^2\langle \nabla^3\varphi_i( X _k+t\Delta X _k),\xi_{k+1}^{\otimes 3}\rangle_{\rm HS}\dif t\right|^2}\\
        \leq& C(\eta d^2+\eta^{3/2}n^{1/2}d^{5/2}).
         \end{align*}

       \textit{Term $R_{5,n}$}. Form Lemma \ref{Bound} and the boundedness of $\|\nabla^2 \varphi_i\|_{\rm{op}}$ and $\|\nabla^3 \varphi_i\|_{\rm{op}}$,
\begin{align*}
\mathbb{E}\lvert R_{5,n}\rvert
&\leq \mathbb{E}\sqrt{
    \sum_{i=1}^d
    \bigg[
        \sum_{k=0}^{n-1}
        \Big(
            \frac{\eta^{3/2}}{2 n^{1/2}}
            \,\|\nabla^2 \varphi_i\|_{\mathrm{op}}
            \,\|\nabla U(X_k)\|^2
            +
            \frac{\eta^{5/2}}{2 n^{1/2}}
            \,\|\nabla^3 \varphi_i\|_{\mathrm{op}}
            \,\|\nabla U(X_k)\|^3
        \Big)
    \bigg]^2
}\\
&\leq
\mathbb{E}\sqrt{
    \sum_{i=1}^d
    \bigg[
        \sum_{k=0}^{n-1}
        \frac{\eta^{3/2}}{2 n^{1/2}}
        \,\|\nabla^2 \varphi_i\|_{\mathrm{op}}
        \,\|\nabla U(X_k)\|^2
    \bigg]^2
}
\\ &\quad+
\mathbb{E}\sqrt{
    \sum_{i=1}^d
    \bigg[
        \sum_{k=0}^{n-1}
        \frac{\eta^{5/2}}{2 n^{1/2}}
        \,\|\nabla^3 \varphi_i\|_{\mathrm{op}}
        \,\|\nabla U(X_k)\|^3
    \bigg]^2
}
\\
&\leq
\sqrt{\mathbb{E}
    \sum_{i=1}^d
    \bigg[
        \sum_{k=0}^{n-1}
        \frac{\eta^{3/2}}{2 n^{1/2}}
        \,\|\nabla^2 \varphi_i\|_{\mathrm{op}}
        \,\|\nabla U(X_k)\|^2
    \bigg]^2
}
\\ &\quad+
\mathbb{E}\left(
    \sum_{i=1}^d
    \bigg[
        \sum_{k=0}^{n-1}
        \frac{\eta^{5/2}}{2 n^{1/2}}
        \,\|\nabla^3 \varphi_i\|_{\mathrm{op}}
        \,\|\nabla U(X_k)\|^3
    \bigg]\right)
\\
&\leq C\bigl(\eta^{3/2} n^{1/2} d^{3/2}
        + \eta^{5/2} n^{1/2} d^{5/2}\bigr).
\end{align*}

        \textit{Term $R_{6,n}$}. Similarly with \textit{Term $R_{5,n}$},
\begin{align*}
        \mathbb{E}|R_{6,n}|&\leq\sqrt{\sum_{i=0}^d\E\left[\sum_{k=0}^{n-1} \|\nabla^3 \varphi_i\|_{\rm{op}}\left(\frac{\eta^{3/2}}{2n^{1/2}} \|\nabla U(X_k)||\xi_{k+1}|^2 +\frac{\eta^{2}}{2n^{1/2}}\|\nabla U(X_k)|^2|\xi_{k+1}|)\right)\right]^2}\\
        &\leq C(\eta^{3/2}n^{1/2}d^2).
         \end{align*}

		Combining the inequality $\mathbb{E}\big| \Sigma^{-1/2} \mathcal{R}_{n} \big|\leq \|\Sigma^{-1/2}\|_{\rm op}\E|R_{n}|$ and the above six estimates  implies the desired result.
	\end{proof}

\subsection{Proof of Theorem \ref{T2}}\label{sub33}

Before proving Theorem \ref{T2}, we first prove 
 Lemmas \ref{L23}, \ref{Bound} and  \ref{873}. 

\begin{proof}[Proof of Lemma \ref{L23}]
		We divide the proof into two steps. The first step is to bound
		\begin{align}\label{L231}
			\left|\pi_\eta (2\nabla \varphi_i^\tau   \nabla \varphi_j) - \pi(2\nabla \varphi_i^\tau   \nabla \varphi_j)\right|,
		\end{align}
		and the second step is to bound
		\begin{align}\label{L232}
			\mathbb{E}\left[2n^{-1}\sum_{I=1}^n \nabla \varphi_i( X _{I-1})^\tau    \nabla \varphi_j( X _{I-1}) - \pi_\eta(2\nabla \varphi_i^\tau   \nabla \varphi_j)\right]^2.
		\end{align}
		
		\textbf{Step 1:} From \eqref{sobo2}, the function
		\[
		x \in \mathbb{R}^d \mapsto 2\nabla \varphi_i(x)^\tau   \nabla \varphi_j(x) \in \mathbb{R}
		\]
		is bounded and twice continuously differentiable. Thus, by \cite[Lemma 3.1]{Lu2022}, the Stein equation 
		\[
		2\nabla \varphi_i(x)^\tau  \nabla \varphi_j(x) - \pi(2\nabla \varphi_i^\tau   \nabla \varphi_j) = \mathcal{A} f_{ij}(x)
		\]
		has the solution
		\[
		f_{ij}(x) = -\int_0^\infty \mathbb{E}\left[2 \nabla \varphi_i(X_t(x))^\tau   \nabla \varphi_j(X_t(x)) - \pi(2\nabla \varphi_i^\tau   \nabla \varphi_j) \right] \mathrm{d}t.
		\]
		Since $ X _0 \sim \pi_\eta$, it follows that
		\begin{align}\label{11}
			\left|\pi_\eta (2\nabla \varphi_i^\tau   \nabla \varphi_j) - \pi(2\nabla \varphi_i^\tau   \nabla \varphi_j)\right| = \left|\mathbb{E} \mathcal{A} (f_{ij}( X _0))\right|.
		\end{align}
		
		From \eqref{sobo2} in Proposition \ref{prop1}, we know $\|\nabla^k (2\nabla \varphi_i^\tau   \nabla \varphi_j)\|_{\mathrm{op}} \leq C$ for $k=1,2,3$. Then applying  \eqref{sobo1} in Proposition \ref{prop1} yields
		\begin{align*}
			\|\nabla^k f_{i,j}\|_{\mathrm{op}}\leq C
		\end{align*}
		for $k=1,2,3$.

       According to  the Taylor expansion,
       \begin{align*}
           0&=\E[f_{ij}(X_1)-f_{ij}(X_0)]\\
            &=\E[\langle \nabla f_{ij}(X_0), \Delta  X _0\rangle+\frac{1}{2}\langle \nabla^2 f_{ij}(X_0), \Delta  X _0\Delta  X _0^\tau\rangle]\\
            &\quad\quad +\frac{1}{2}\E\left\{\int_0^1(1-t)^2\sum_{i_1,i_2,i_3=1}^d \nabla_{i_1,i_2,i_3}^3 f_{i,j}( X _0 + t \Delta  X _0) (\Delta  X _0)_{i_1} (\Delta  X _0)_{i_2} (\Delta  X _0)_{i_3} \right\}\mathrm{d}t\big].
       \end{align*}
Recall the generator $\mathcal A$ is given by
\begin{align*}
    \mathcal A g=\langle -\nabla U, \nabla g\rangle+\Delta g,
\end{align*}
then we have
		\begin{align}\label{12}
			\mathbb{E} \mathcal{A}(f_{i,j}( X _0)) = & -\frac{1}{2} \mathbb{E} \left\langle \nabla^2 f_{i,j}( X _0), \eta \nabla U( X _0) [\nabla U( X _0)]^\tau  \right\rangle_{\mathrm{HS}} \notag \\
			& - \frac{1}{2\eta} \int_0^1 \mathbb{E} \left\{ \sum_{i_1,i_2,i_3=1}^d \nabla_{i_1,i_2,i_3}^3 f_{i,j}( X _0 + t \Delta  X _0) (\Delta  X _0)_{i_1} (\Delta  X _0)_{i_2} (\Delta  X _0)_{i_3} \right\} \mathrm{d}t \notag \\
			:= & A_1 + A_2.
		\end{align}

		By \eqref{AB1} and Lemma \ref{Bound}, we have
		\begin{align}\label{AA3}
			A_1 &\leq C \eta \| \nabla^2 f_{i,j} \|_{\mathrm{op}} \E[|\nabla U(X_0)|^2]\notag\\
            &\leq C\eta \E[|X_0|^2]\notag\\
            &\leq C\eta d.
		\end{align}

		Similarly,   we have
		\begin{align}\label{AA_2}
			A_2 &\leq \frac{C}{\eta} \mathbb{E} |\Delta  X _0|^3  \notag \\
            &\leq C \left[ \eta^2 \mathbb{E} |\nabla U( X _0)|^3 + \eta^{1/2} \mathbb{E} |\xi|^3 \right]\notag\\
			&\leq C \left\{ \eta^2 \left[ \pi_\eta (|x|^4) \right]^{3/4} + \eta^{1/2} \mathbb{E} |\xi|^3 \right\} \notag \\
			&\leq C \eta^{1/2} d^{3/2}.
		\end{align}
		
		Combining \eqref{11}-\eqref{AA_2}, we conclude that
		\[
		\left|\pi_\eta (2\nabla \varphi_i^\tau   \nabla \varphi_j) - \pi(2\nabla \varphi_i^\tau   \nabla \varphi_j)\right|  \leq C \eta^{1/2} d^{3/2}.
		\]
		
		\textbf{Step 2:} Denote
		\[
		Z_I = 2\nabla \varphi_i( X _{I-1})^\tau    \nabla \varphi_j( X _{I-1}), \quad Z = \pi_\eta (2\nabla \varphi_i^\tau   \nabla \varphi_j).
		\]
		Note that
		\begin{align*}
			& \mathbb{E} \left[ n^{-1} \sum_{I=1}^n Z_I - Z \right]^2 \\
			=\; & n^{-2} \sum_{I=1}^n \mathbb{E} (Z_I - Z)^2 + 2 n^{-2} \sum_{1 \leq I < J \leq n} \mathbb{E} \left[ (Z_I - Z)(Z_J - Z) \right] \\
            =&n^{-2} \sum_{I=1}^n \mathbb{E} (Z_I - Z)^2+ 2 n^{-2} \sum_{1 \leq I < J \leq n}\operatorname{Cov}(Z_I, Z_J)\\
			:= & S_1 + S_2.
		\end{align*}
		
		For the first term, the boundedness of $\|\nabla \varphi\|_{\mathrm{op}}$ implies that for any $1\leq I\leq n$,
		\[
		\mathbb{E} (Z_I - Z)^2 \leq 2 \mathbb{E} Z_I^2 + 2 Z^2 \leq C,
		\]
		hence $S_1 \leq C n^{-1}$.

		To handle  $S_2$, set
		\[
		l(x) := 2\nabla \varphi_i(x)^\tau    \nabla \varphi_j(x), \quad p_k l(x) := \mathbb{E}[l( X _k) \mid  X _0 = x].
		\]
		By stationary of $\pi_\eta$, we get
		\begin{align*}
			\operatorname{Cov}(Z_I, Z_J) &=  \operatorname{Cov}(l( X _0), l( X _{J-I})) \\
			&= \mathbb{E}[l( X _0) l( X _{J-I})] - \mathbb{E} l( X _0) \mathbb{E} l( X _{J-I}) \\
			&= \mathbb{E}[l( X _0) (p_{J-I} l( X _0) - \mathbb{E} l( X _0))]\\
        &= \mathbb{E}[l( X _0) (p_{J-I} l( X _0) - \mathbb{E} p_{J-I} l( X _0))].
		\end{align*}
		Let $k=J-I$. Since $|\nabla \varphi|$ is bounded,
		we have
		\begin{align*}
			\operatorname{Cov}(Z_I, Z_J) &\leq C\mathbb{E} | p_k l( X _0) - \mathbb{E} p_k l( X _0) | \\
			&=C \int_{\mathbb{R}^d} \left| \int_{\mathbb{R}^d} (p_k l(x) - p_k l(y)) \pi_\eta(\mathrm{d}y) \right| \pi_\eta(\mathrm{d}x) \\
			&\leq C\int_{\mathbb{R}^d} \int_{\mathbb{R}^d} |p_k l(x) - p_k l(y)| \pi_\eta(\mathrm{d}y) \pi_\eta(\mathrm{d}x).
		\end{align*}
		
		By \textbf{Assumption} \ref{Ass1}, for any $x,y$,
		\begin{align*}
			|p_k l(x) - p_k l(y)| &= \left| \mathbb{E}[l( X _k) \mid  X _0 = x] - \mathbb{E}[l( X _k) \mid  X _0 = y] \right| \\
			&= \left| \mathbb{E}[l( X _k') - l( X _k) \mid  X _0 = x,  X _0' = y] \right| \\
			&\leq C \mathbb{E} \left[ | X _k' -  X _k| \mid  X _0 = x,  X _0' = y \right] \\
			&\leq C (1 - 2\eta \alpha + \eta^2 \beta)^{k/2} |y - x|.
		\end{align*}
		
		Therefore, by \eqref{sobo2},
		\begin{align*}
			\operatorname{Cov}(Z_I, Z_J) &\leq C (1 - 2\eta \alpha + \eta^2 \beta)^{(J-I)/2} \mathbb{E}| X _0 -  X _0'| \\
			&\leq C \sqrt{d} (1 - 2\eta \alpha + \eta^2 \beta)^{(J-I)/2}.
		\end{align*}
		
		Since $\eta\in (0,2\alpha/\beta^2) $, we get
		\begin{align*}
			S_2 &\leq C n^{-2} \sqrt{d} \sum_{1 \leq I < J \leq n} (1 - 2\eta \alpha + \eta^2 \beta)^{(J-I)/2} \\
			&\leq C n^{-2} \sqrt{d} \sum_{I=0}^{n-1} \sum_{K=1}^{n-1-I} (1 -2 \eta \alpha + \eta^2 \beta)^{K/2} \\
			&\leq C \frac{\sqrt{d}}{n \eta}.
		\end{align*}
		
		Combining the bounds on \eqref{L231} and \eqref{L232}, we conclude the proof.
	\end{proof}
	
	\begin{proof}[Proof of Lemma \ref{Bound}]
		By \eqref{LMC-1} and Young's inequality, a straightforward calculation yields
		\begin{align}
			\mathbb{E}\left(V( X _{k+1}) \mid  X _k \right) &= \mathbb{E}\left(| X _{k+1}|^4 + 1 \mid  X _k \right) \notag \\
			&\leq\lambda(\eta)| X _k|^4+\eta(1+Cd^2), \label{pi}
		\end{align}
		where $\lambda(\eta)=(1 -4\alpha\eta+8\eta^2\beta^2)$ and the constant $C$ depends on $\beta$ and $\eta$.When  $\eta\in(0,\frac{\alpha}{2\beta^2})$, $\lambda(\eta)\in(0,1)$.

		Let $ X _0 \sim \pi_{\eta}$, then $ X _k \sim \pi_{\eta}$, $\forall k\geq1$.  For  $\eta\in(0,\frac{\alpha}{2\beta^2})$, taking expectation in \eqref{pi} yields
		\[
		\pi_{\eta}(V) \leq (1 -4\alpha\eta+8\eta^2\beta^2) \pi_{\eta}(V) +  C\eta d^2,
		\]
		which concludes the proof.
	\end{proof}

	\begin{proof}[Proof of Lemma \ref{873}]

		Since each entry of the matrix $ \xi_{k+1}  \xi_{k+1}^\tau  - I_d $ is centered, by \eqref{ophs} and \eqref{sobo2}, we have
		\begin{align*}
			  \sum_{k=0}^{n-1} \mathbb{E} \big[\big\langle \nabla^2 \varphi_i( X _k),  \xi_{k+1}  \xi_{k+1}^\tau  -I_d  \big\rangle_{\mathrm{HS}}^2 \big] 
			\leq & \sum_{k=0}^{n-1} \mathbb{E}\big[ \| \nabla^2 \varphi_i( X _k) \|_{\mathrm{HS}}^2 \cdot \|  \xi_{k+1}  \xi_{k+1}^\tau  -I_d  \|_{\mathrm{HS}}^2 \big] \\
			\leq & \sum_{k=0}^{n-1} \mathbb{E}\big[d\|  \nabla^2 \varphi_i( X _k) \|_{\mathrm{op}}^2 \cdot \|  \xi_{k+1}  \xi_{k+1}^\tau  -I_d  \|_{\mathrm{HS}}^2 \big] \\
			\leq& Cnd^3,
		\end{align*}
		where the last inequality follows from   $\mathbb{E} \| \xi_{k+1} \xi_{k+1}^\tau  - I_d \|_{\mathrm{HS}}^2 \leq C d^2$, hence \eqref{461} holds.

        Similarly, we have
        \begin{align*}
        \sum_{k=0}^{n-1}\E\big[\langle \nabla^2\varphi_i( X _k),-\nabla U(X_k)\xi_{k+1}^{\tau}\rangle_{\mathrm{HS}}^2\big]&\leq \sum_{k=0}^{n-1}\mathbb{E}\big[d\|  \nabla^2 \varphi_i( X _k) \|_{\mathrm{op}}^2 \cdot \|  \nabla U(X_k)  \xi_{k+1}^\tau   \|_{\mathrm{HS}}^2 \big]\\
        &\leq Cd^2\sum_{k=0}^{n-1}\E[|X_k|^2]\\
        &\leq Cnd^3,
        \end{align*}
        where the last inequality follows by Lemma \ref{Bound} and  $X_k\sim\pi_\eta$ for any $k$.
        
		It remains to show \eqref{1553}. Note that
		\begin{align*}
			& \mathbb{E} \left| \sum_{k=0}^{n-1} \langle \nabla^3\varphi_i( X _k+t\Delta X _k),\xi_{k+1}^{\otimes 3}\rangle_{\rm HS}\right|^2 \notag\\ 
            \leq &2 \mathbb{E} \left| \sum_{k=0}^{n-1} \langle \nabla^3\varphi_i( X _k),\xi_{k+1}^{\otimes 3}\rangle_{\rm HS} \right|^2+2 \mathbb{E} \left| \sum_{k=0}^{n-1} \langle \nabla^3\varphi_i( X _k+t\Delta X _k)-\nabla^3\varphi_i( X _k),\xi_{k+1}^{\otimes 3}\rangle_{\rm HS}\right|^2.
		\end{align*}
        For the first term, since $X_k$ is independent with $\xi_{k+1}$ and $\|\nabla^3\varphi\|_{\rm op}$ is bounded, we have
            	\begin{align*}
			 \mathbb{E} \left| \sum_{k=0}^{n-1} \langle \nabla^3\varphi_i( X _k),\xi_{k+1}^{\otimes 3} \rangle\right|^2 = \sum_{k=0}^{n-1} \mathbb{E} \left| \langle \nabla^3\varphi_i( X _k),\xi_{k+1}^{\otimes 3} \rangle \right|^2  \leq  C \sum_{k=0}^{n-1} \mathbb{E} |\xi_k|^6 \leq C nd^{3}.
		\end{align*} 
         For the second term, 
         \begin{align*}
           & \mathbb{E} \left| \sum_{k=0}^{n-1} \langle \nabla^3\varphi_i( X _k+t\Delta X _k)-\nabla^3\varphi_i( X _k),\xi_{k+1}^{\otimes 3}\rangle_{\rm HS}\right|^2\notag\\
            \leq & \mathbb{E} \left| \sum_{k=0}^{n-1} \int_{0}^t\langle \nabla^4\varphi_i( X _k+s\Delta X _k),\xi_{k+1}^{\otimes 3}\otimes\Delta X _k \rangle_{\rm HS}\dif s\right|^2\notag\\
            \leq& C\eta^2 \mathbb{E} \left| \sum_{k=0}^{n-1} \int_{0}^t\langle \nabla^4\varphi_i( X _k+s\Delta X _k),\xi_{k+1}^{\otimes 3}\otimes\nabla U( X _k )\rangle_{\rm HS}\dif s\right|^2\notag\\
            &+C\eta  \mathbb{E} \left| \sum_{k=0}^{n-1} \int_{0}^t\langle \nabla^4\varphi_i( X _k+s\Delta X _k),\xi_{k+1}^{\otimes 4}\rangle_{\rm HS}\dif s\right|^2\\
            \leq &C\eta n^2 d^4,
         \end{align*}
         where the last inequality follows from \eqref{sobo2}, \textbf{Assumption} \ref{Ass1} and Lemma \ref{Bound}. Collecting  the above inequalities completes the proof.
	\end{proof}
	
    With the above preparations, we  now prove our main result.
    \begin{proof}[Proof of Theorem \ref{T2}]
	By the definition of $1$-Wasserstein distance, we have
	\begin{align}\label{ts}
    \mathcal{W}_1(\Sigma^{-1/2}W_n,\gamma)=&
		\mathcal{W}_1(\Sigma^{-1/2}(\mathcal{H}_n+\mathcal{R}_n),\gamma)\notag\\
        \leq& \sup_{f:\|f\|_{\rm{Lip}}\leq 1}|\mathbb{E}f(\Sigma^{-1/2}(\mathcal{H}_n+\mathcal{R}_n))-\gamma(f)|\notag\\
		\leq & \sup_{f:\|f\|_{\rm{Lip}}\leq 1}|\mathbb{E}f(\Sigma^{-1/2}\mathcal{H}_n)-\gamma(f)|\notag\\
		&+\sup_{f:\|f\|_{\rm{Lip}}\leq 1}|\mathbb{E}f(\Sigma^{-1/2}(\mathcal{H}_n+\mathcal{R}_n))-\mathbb{E}f(\Sigma^{-1/2}\mathcal{H}_n)|\notag\\
		\leq &  \mathcal{W}_1(\Sigma^{-1/2}\mathcal{H}_n,\gamma)+\mathbb{E}|\Sigma^{-1/2}\mathcal{R}_n|,
	\end{align}
Note the error estimates for
$\mathcal{W}_1(\Sigma^{-1/2}\mathcal{H}_n,\gamma)$  and $\mathbb{E}|\Sigma^{-1/2}\mathcal{R}_n|$  have been established in Propositions \ref{L21} and \ref{L22} respectively. Combining these two estimates completes the proof, as desired.
	\end{proof}

	\appendix
	
	\section{Proofs of Lemma \ref{strict} and Proposition \ref{prop1}}\label{B}
	
    To study the regularity of the solution of the Stein equation, we need consider the derivative of $X_t^x$ in \eqref{LSDE} with respect to the initial value $x$, which is usually called the Jacobi flow. Let $v\in\R^d$, the Jacobi flow $\nabla_vX_t^x$ along the direction $v$ is defined by
	\begin{align}\label{jf1}
		\nabla_vX_t^x=\lim_{\e\to 0}\frac{X_t^{x+\e v}-X_t^x}{\e}, \ t\geq0.
	\end{align}
    Let $e_i$ be the $i$-th unit vector on $\R^d$,  we define
    \begin{align*}
       \nabla X_t^x =(\nabla_{e_1} X_t^x,\nabla_{e_2} X_t^x,\cdots,\nabla_{e_d} X_t^x).
    \end{align*}

    \begin{proof}[Proof of Lemma \ref{strict}]
Define the matrix $\nabla \varphi:=(\nabla  \varphi_1,\cdots,\nabla\varphi_d)$, where $\varphi_i:\R^d\to\R$ is the solution of the Stein equation 
\begin{align*}
		\mathcal{A}\varphi_i=h_i-\pi(h_i),
	\end{align*}
where $h_i, \ i=1,\ldots,d$ were defined in \eqref{cmap}.

Recall the definition of $\Sigma$ in \eqref{cm}, establishing Lemma \ref{strict}  is equivalent to  showing that there exist two positive constants $c$ and $C$ such that
    \begin{align}\label{boundforvar}
    cI_d \preceq \nabla\varphi\preceq CI_d.
    \end{align}

    According to \eqref{solu}, for any $u\in\R^d$, 
	\begin{align*}
		\nabla_u \varphi_i(x)=&\int_{0}^{\infty}\mathbb{E}\langle \nabla_u X_t^x,\nabla h_i(X_t^x)\rangle\dif t\\
	=&\int_{0}^{\infty}\mathbb{E}\langle\nabla_u X_t^x, e_i\rangle\dif t.
	\end{align*}
    Taking $u=e_i, i=1,\ldots,d$ implies 
    \begin{align*}
        \nabla \varphi_i(x)=\int_{0}^{\infty}\mathbb{E}[(\nabla X_t^x)^\tau  e_i]\dif t.
    \end{align*}
	Therefore, we have
	\begin{align}\label{AA1}
		(\nabla \varphi)(x)=&( \nabla \varphi_1,\cdots,\nabla\varphi_d)(x)\notag\\
		=&\int_{0}^{\infty}\mathbb{E}(\nabla X_t^x)^{\tau}\dif t.
	\end{align}
	By  \cite[Eq.~(5.2)]{Fang2018}, we have
	\begin{align*}
		(\nabla X_t^x)^\tau=\left(\exp\left\{-\int_{0}^t \nabla^2 U(X_r^x)\dif r\right\}\right)^\tau=\exp\left\{-\int_{0}^t \big(\nabla^2 U(X_r^x)\big)^{\tau}\dif r\right\}.
	\end{align*}
    By \textbf{Assumption} \ref{Ass1}, it holds that $\alpha I_d \preceq \big(\nabla^2 U(x)\big)^\tau \preceq \beta I_d$ for any $x\in \R^d$, so we have
    \begin{align}\label{AA2}
        \exp\left\{- \beta t I_d\right\}\preceq (\nabla X_t^x)^{\tau} \preceq  \exp\left\{- \alpha t I_d\right\}.
    \end{align}
	Combining \eqref{AA1} and \eqref{AA2}, we get
	\begin{align*}
		\beta^{-1}I_d=\int_{0}^{\infty}\mathbb{E}\left[\exp\left\{- \beta t I_d\right\}\right]\dif t\preceq \nabla \varphi(x)  \preceq \int_{0}^{\infty}\mathbb{E}\left[\exp\left\{- \alpha t I_d\right\}\right]\dif t=\alpha^{-1}I_d,
	\end{align*}
	which implies \eqref{boundforvar}. The proof is completed.
\end{proof}	

	Turn to Proposition \ref{prop1}. Similar to \eqref{jf1}, for $v_1, v_2, v_3\in\R^d$, we  define
	
	\begin{align*}
		\nabla_{v_2}\nabla_{v_1}X_t^x=\lim_{\e\to 0}\frac{\nabla_{v_1}X_t^{x+\e v_2}-\nabla_{v_1}X_t^x}{\e}, \ t\geq0,
	\end{align*}
	\begin{align*}
		\nabla_{v_3}\nabla_{v_2}\nabla_{v_1}X_t^x=\lim_{\e\to 0}\frac{\nabla_{v_2}\nabla_{v_1}X_t^{x+\e v_3}-\nabla_{v_2}\nabla_{v_1}X_t^x}{\e}, \ t\geq0
	\end{align*}
	and
	\begin{align*}
		\nabla_{v_4}\nabla_{v_3}\nabla_{v_2}\nabla_{v_1}X_t^x=\lim_{\e\to 0}\frac{\nabla_{v_3}\nabla_{v_2}\nabla_{v_1}X_t^{x+\e v_4}-\nabla_{v_3}\nabla_{v_2}\nabla_{v_1}X_t^x}{\e}, \ t\geq0.
	\end{align*}

	\begin{lemma}\label{jm}
		Assume the function $U$ in \eqref{LSDE} satisfies \textbf{Assumption} \ref{Ass1}, then there exists some positive constant $C$ such that 
		\begin{align*}
			\E[|\nabla_{v_1}X_t^x|^{16}]\leq &|v_1|^{16}e^{-16\alpha t},\\
			\E[|\nabla_{v_2}\nabla_{v_1}X_t^x|^{8}]\leq &C |v_1|^8|v_2|^8e^{-4\alpha t},\\
			\E[|\nabla_{v_3}\nabla_{v_2}\nabla_{v_1}X_t^x|^{4}] \leq &C|v_1|^4|v_2|^4|v_3|^4e^{-2\alpha t},\\
			\E[|\nabla_{v_4}\nabla_{v_3}\nabla_{v_2}\nabla_{v_1}X_t^x|^{2}] \leq &C|v_1|^2|v_2|^2|v_3|^2|v_4|^2e^{-\alpha t}.
		\end{align*}
	\end{lemma}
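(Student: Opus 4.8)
The plan is to differentiate the SDE \eqref{LSDE} in its initial datum, obtain the linear variational equations (Jacobi flows) of orders one through four, and then propagate a pathwise exponential decay rate $e^{-\alpha t}$ through all of them by an energy estimate combined with Gr\"onwall's inequality; the stated moment bounds then follow by raising the pathwise estimates to the relevant powers and taking expectations. The key structural point is that, because the diffusion coefficient in \eqref{LSDE} is constant, differentiation in $x$ produces \emph{pathwise} ODEs driven by the frozen trajectory $(X_t^x)_{t\ge0}$: writing $y^{(1)}_t=\nabla_{v_1}X_t^x$, $y^{(2)}_t=\nabla_{v_2}\nabla_{v_1}X_t^x$, and so on, one has
\[
\frac{\dif}{\dif t}y^{(1)}_t=-\nabla^2U(X_t^x)\,y^{(1)}_t,\qquad y^{(1)}_0=v_1,
\]
and, for $2\le k\le 4$,
\[
\frac{\dif}{\dif t}y^{(k)}_t=-\nabla^2U(X_t^x)\,y^{(k)}_t+F^{(k)}_t,\qquad y^{(k)}_0=0,
\]
where $F^{(k)}_t$ is a finite sum of terms of the form $\nabla^{j+1}U(X_t^x)[y^{(a_1)}_t,\dots,y^{(a_j)}_t]$ with $2\le j\le k$ and $a_1+\cdots+a_j=k$, so each $a_i<k$. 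These are obtained by iterated differentiation of the previous equation, and the fact that $\nabla^5U$ occurs at $k=4$ is exactly why \textbf{Assumption}~\ref{Ass1}(iv) controls derivatives up to order five; well-posedness of these equations and the $C^1$-dependence of $x\mapsto X_t^x$ that legitimizes them are classical (cf.\ \cite{IW1989} and the Jacobi-flow representation already used in \eqref{AA1}--\eqref{AA2}, following \cite{Fang2018}).

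For the first-order flow, $\tfrac{\dif}{\dif t}|y^{(1)}_t|^2=-2\langle y^{(1)}_t,\nabla^2U(X_t^x)y^{(1)}_t\rangle\le-2\alpha|y^{(1)}_t|^2$ by \textbf{Assumption}~\ref{Ass1}(iii), hence $|y^{(1)}_t|\le|v_1|e^{-\alpha t}$ pathwise, and raising to the sixteenth power gives the first bound (indeed with constant one). For $k=2,3,4$ I will prove by induction the pathwise claim $|y^{(k)}_t|\le C_k|v_1|\cdots|v_k|e^{-\alpha t}$. Granting it for all orders below $k$, each summand of $F^{(k)}_t$ is bounded, using $\|\nabla^{j+1}U(\cdot)\|_{\mathrm{op}}\le M$, by $M\prod_i|y^{(a_i)}_t|\le M\big(\prod_iC_{a_i}\big)|v_1|\cdots|v_k|e^{-j\alpha t}\le C_k'|v_1|\cdots|v_k|e^{-2\alpha t}$, since $j\ge2$. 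Then $\tfrac{\dif}{\dif t}|y^{(k)}_t|^2\le-2\alpha|y^{(k)}_t|^2+2|y^{(k)}_t||F^{(k)}_t|$ gives $\tfrac{\dif}{\dif t}|y^{(k)}_t|\le-\alpha|y^{(k)}_t|+|F^{(k)}_t|$, and variation of parameters (using $y^{(k)}_0=0$) yields
\[
|y^{(k)}_t|\le\int_0^t e^{-\alpha(t-s)}|F^{(k)}_s|\,\dif s\le C_k'|v_1|\cdots|v_k|\,e^{-\alpha t}\int_0^t e^{-\alpha s}\,\dif s\le\frac{C_k'}{\alpha}|v_1|\cdots|v_k|e^{-\alpha t},
\]
completing the induction. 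Taking $k=2,3,4$ and raising to the powers $8,4,2$ respectively (then taking expectations) gives the remaining three inequalities; note that this argument in fact delivers the exponential rates $e^{-8\alpha t}$, $e^{-4\alpha t}$, $e^{-2\alpha t}$, which are stronger than those stated, and only first moments of $|v_i|$ are needed.

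The delicate points lie not in the energy estimates but in the following. First, writing down $F^{(3)}$ and $F^{(4)}$ explicitly: this is an iterated chain-rule/Leibniz expansion of $t\mapsto\nabla^2U(X_t^x)y^{(k-1)}_t$ and produces, besides the leading dissipative term $-\nabla^2U(X_t^x)y^{(k)}_t$, only lower-order flows contracted against $\nabla^jU(X_t^x)$ for $3\le j\le 5$, each of which is uniformly bounded by $M$ — routine but notation-heavy bookkeeping. Second, the function $t\mapsto|y^{(k)}_t|$ need not be differentiable at its zeros; this is circumvented entirely by running the Gr\"onwall argument on the smooth map $t\mapsto|y^{(k)}_t|^2$, or by working with $\sqrt{|y^{(k)}_t|^2+\varepsilon}$ and letting $\varepsilon\downarrow0$. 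Third, and most substantively, the a priori existence, continuity and differentiability in $x$ of the flow $X_t^x$ together with the validity of the variational equations; under \textbf{Assumption}~\ref{Ass1} the drift $-\nabla U$ is gradient-Lipschitz with bounded higher derivatives, so these are standard, and this is exactly where \cite{IW1989} and the identity of \cite{Fang2018} are invoked.
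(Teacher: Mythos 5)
Your proof is correct, and it reaches the same conclusion via a noticeably more streamlined argument than the paper's. Both proofs exploit the same structural fact — since the diffusion coefficient in \eqref{LSDE} is constant, the Jacobi flows of all orders solve pathwise linear ODEs of the form $\dot y^{(k)}_t = -\nabla^2 U(X_t^x) y^{(k)}_t + F^{(k)}_t$, with $F^{(k)}_t$ an inhomogeneity built from lower-order flows contracted against $\nabla^j U$ for $3\le j\le k+1$ — and both propagate exponential decay by an energy estimate that uses the $\alpha$-strong convexity and the uniform bound on $\nabla^3 U,\dots,\nabla^5 U$. Where you differ is in how you close the estimate: the paper applies It\^o's formula (really just the chain rule, since the Jacobi ODEs have no martingale part) to the specific power $|y^{(k)}_t|^{2^{5-k}}$, absorbs the cross term via Young's inequality at the cost of halving the dissipation constant, and then invokes the comparison theorem to integrate the resulting differential inequality in expectation; you instead divide the energy identity by $2|y^{(k)}_t|$, apply Duhamel's formula directly to $|y^{(k)}_t|$ pathwise, and only raise to a power and take expectations at the very end. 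Your route is more elementary — it requires no Young's inequality, no choice of a small parameter $c$, no comparison theorem — and, as you correctly observe, it yields the sharper pathwise rate $|y^{(k)}_t|\le C_k |v_1|\cdots|v_k| e^{-\alpha t}$ for every $k$, hence decay exponents $e^{-8\alpha t}$, $e^{-4\alpha t}$, $e^{-2\alpha t}$ for the second, third and fourth moments, strictly stronger than the paper's $e^{-4\alpha t}$, $e^{-2\alpha t}$, $e^{-\alpha t}$. The paper's choice of decaying powers $16,8,4,2$ is dictated by its Young's-inequality bookkeeping (each level halves both the power and the rate), whereas your induction is uniform in $k$ and so scales transparently. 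Your remarks on the two subtleties — non-differentiability of $t\mapsto|y^{(k)}_t|$ at its zeros and the validity of the variational equations under \textbf{Assumption}~\ref{Ass1} — correctly identify and dispose of the only real technical concerns; the Leibniz expansion of $F^{(3)}$ and $F^{(4)}$ that you leave implicit is exactly what the paper's $G_t$ and $F_t$ write out in full.
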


    For sake of reading, the proof of this lemma is deferred until after the proof of Proposition \ref{prop1}.
	
	\begin{proof} [Proof of Proposition \ref{prop1}]
		Since  \eqref{solu} holds from \cite[Eq.~(6.3)]{Fang2018}, we mainly focus on \eqref{sobo1} and \eqref{sobo2}.

		(i)  Note
        \begin{align*}
			f_g(x) = -\int_{0}^{\infty} \mathbb{E}\big[g(X_t^x) - \pi(g)\big] \, \dif t,
		\end{align*}
        
         then for any $v\in\R^d$, 
		\begin{align}\label{1910}
			|\nabla_v f_g(x)|=&\left|-\int_0^\infty\E[\nabla g(X_t^x)\nabla_{v}X_t^x]\dif t\right|\notag\\
			\leq&\int_0^\infty\E[|\nabla g||\nabla_{v}X_t^x|]\dif t\notag\\
			\leq&\frac{C}{\alpha}|v|,
		\end{align}
		where the last inequality follows from Lemma \ref{jm}. So we obtain
		\begin{align}\label{1911}
			\|\nabla f_g\|=&\sup_x\|\nabla f_g(x)\|_{\rm op}\notag\\
			=&\sup_x\sup_{|v|=1}|\nabla_v f_g(x)|\notag\\
			\leq&\frac{C}{\alpha}.
		\end{align}
		
		(ii) For any $v_1, v_2, v_3\in\R^d$, we easily have
		\begin{align*}
			&\nabla_{v_2}\nabla_{v_1}f_g(x)\\
            =&-\int_0^\infty\nabla_{v_2}\E[\nabla g(X_t^x)\nabla_{v_1}X_t^x]\dif t\\
			=&-\int_0^\infty\E[\nabla g(X_t^x)\nabla_{v_2}\nabla_{v_1}X_t^x]\dif t-\int_0^\infty\E[(\nabla^2 g(X_t^x)\nabla_{v_2}X_t^x)\nabla_{v_1}X_t^x]\dif t
		\end{align*}	
        and
        \begin{align*}
        \nabla_{v_3}\nabla_{v_2}\nabla_{v_1}f_g(x)=&
			-\int_0^\infty\E[\nabla g(X_t^x)\nabla_{v_3}\nabla_{v_2}\nabla_{v_1}X_t^x]\dif t\\
			&-\int_0^\infty\E[(\nabla^2 g(X_t^x)\nabla_{v_3}X_t^x)(\nabla_{v_2}\nabla_{v_1}X_t^x)]\dif t,\\
			&-\int_0^\infty\E[(\nabla^2 g(X_t^x)\nabla_{v_3}\nabla_{v_2}X_t^x)(\nabla_{v_1}X_t^x)]\dif t,\\
			&-\int_0^\infty\E[(\nabla^2 g(X_t^x)\nabla_{v_2}X_t^x)(\nabla_{v_3}\nabla_{v_1}X_t^x)]\dif t,\\
			&-\int_0^\infty\E\langle\nabla^3 g(X_t^x),\, \nabla_{v_1}X_t^x\otimes\nabla_{v_2}X_t^x\otimes\nabla_{v_3}X_t^x\rangle_{\mathrm{HS}}\dif t.\end{align*}
		
		Using an argument similar to \eqref{1910} and \eqref{1911} together with Lemma \ref{jm}, the H\"older inequality, and the condition $\|\nabla^k  g\|_{\mathrm{op}}\leq C$ for $k=1,2,3$, we derive  \eqref{sobo1}.		
		
		Next we consider the particular case $g=h_i$. Since  $\|\nabla ^{k}h_i\|_{\mathrm{op}}\leq C$, $k=0,1,2,3$, then by \eqref{sobo1}, it holds immediately that
        \begin{align*}
		\|\nabla^k \varphi_i\|_{\mathrm{op}}\leq C
	\end{align*} 
     for  $k=1,2,3$.  It remains to show $\|\nabla^4 \varphi_i\|_{\mathrm{op}}\leq C$. Actually, 
       $\nabla^k h_i=0$ for $k=2,3,4$, hence we have
		\begin{align*}
			\nabla_{v_4} \nabla_{v_3}\nabla_{v_2}\nabla_{v_1}\vp_i(x)=&
			-\int_0^\infty\E[\nabla h_i(X_t^x) \nabla_{v_4}\nabla_{v_3}\nabla_{v_2}\nabla_{v_1}X_t^x]\dif t.\end{align*}
		Applying Lemma \ref{jm} and the H\"older inequality again, we obtain \eqref{sobo2} as desired.
	\end{proof}

	\begin{proof}[Proof of Lemma \ref{jm}]
		(i) It follows from \eqref{LSDE} that
		\begin{align*}
			X_t^x=x+\int_0^t -\nabla U(X_s^x)\dif s+\sqrt{2} B_t.
		\end{align*}
		Combining with \eqref{jf1},  for any $v_1\in \R^d$, one has $\nabla_{v_1}X_0^x=v_1$ and
		\begin{align*}
			\dif\nabla_{v_1}X_t^x=-\nabla^2 U(X_t^x) \nabla_{v_1}X_t^x\dif t.
		\end{align*}
		
		Using It\^{o}'s formula for function $f(x)=|x|^{16}$ , one has
		\begin{align*}
			\dif|\nabla_{v_1}X_t^x|^{16}=&\langle 16|\nabla_{v_1}X_t^x|^{14}\nabla_{v_1}X_t^x,- \nabla^2U(X_t^x)\nabla_{v_1}X_t^x\rangle\dif t\\
			\leq&-16\alpha|\nabla_{v_1}X_t^x|^{16}\dif t,
		\end{align*}
		which implies that
		\begin{align}\label{m1}
			\E[|\nabla_{v_1}X_t^x|^{16}]\leq |v_1|^{16}e^{-16\alpha t}.
		\end{align}
		
		(ii) For any $v_1, v_2\in \R^d$, we know $\nabla_{v_2}\nabla_{v_1}X_0^x=0$ and
		\begin{align*}
			\dif \nabla_{v_2}\nabla_{v_1}X_t^x=\big(-\nabla ^3U(X_t^x)\nabla_{v_2}X_t^x\nabla_{v_1}X_t^x-\nabla^2U(X_t^x)\nabla_{v_2}\nabla_{v_1}X_t^x\big)\dif t.
		\end{align*}
		
		Let $f(x)=|x|^8$,  using It\^{o}'s formula, we have
		\begin{align*}
			&\dif |\nabla_{v_2}\nabla_{v_1}X_t^x|^8\\
			=&\langle 8|\nabla_{v_2}\nabla_{v_1}X_t^x|^{6}\nabla_{v_2}\nabla_{v_1}X_t^x,-\nabla^3U(X_t^x)\nabla_{v_2}X_t^x\nabla_{v_1}X_t^x-\nabla^2U(X_t^x)\nabla_{v_2}\nabla_{v_1}X_t^x\rangle\dif t\\
			\leq&\big(-8\alpha|\nabla_{v_2}\nabla_{v_1}X_t^x|^{8}+8\|\nabla^3U\|_{\mathrm{op}}|\nabla_{v_2}\nabla_{v_1}X_t^x|^{7}|\nabla_{v_1}X_t^x||\nabla_{v_2}X_t^x|\big)\dif t,
		\end{align*}
		
		By Young's inequality, for any positive constant $c$, one has
		
		\begin{align*}
			&c|\nabla_{v_2}\nabla_{v_1}X_t^x|^{7}\frac{1}{c}|\nabla_{v_1}X_t^x||\nabla_{v_2}X_t^x|\\
			\leq&\frac{c^{\frac{8}{7}}|\nabla_{v_2}\nabla_{v_1}X_t^x|^{8}}{\frac{8}{7}}+\frac{|\nabla_{v_1}X_t^x|^8|\nabla_{v_2}X_t^x|^8}{c^8}.
		\end{align*}
		
		Choose $c$ small enough such that $7c^\frac{8}{7}\|\nabla^3U\|<4\alpha$, we obtain
		\begin{align*}
			\dif |\nabla_{v_2}\nabla_{v_1}X_t^x|^8\leq \big(-4\alpha|\nabla_{v_2}\nabla_{v_1}X_t^x|^{8}+K|\nabla_{v_1}X_t^x|^8|\nabla_{v_2}X_t^x|^8\big)\dif t,
		\end{align*}
		where the constant $K$ only depends on $\alpha$ and $M$.

		It follows from the comparison theorem (see \cite[Theorem 54]{Plotter2003Stochastic}) that
		\begin{align}\label{m2}
			\E[|\nabla_{v_2}\nabla_{v_1}X_t^x|^8]\leq &Ce^{-4\alpha t}\int_0^\tau e^{4\alpha s}\E[|\nabla_{v_1}X_t^x|^8|\nabla_{v_2}X_t^x|^8]\dif s\nonumber\\
			\leq&C|v_1|^8|v_2|^8e^{-4\alpha t}\int_0^\tau e^{-12\alpha s}\dif s\nonumber\\
			\leq& C|v_1|^8|v_2|^8e^{-4\alpha t},
		\end{align}
		where we used the  moment estimate of one-order Jacobi flow in the second inequality.
		
		(iii)  For $v_1,v_2,v_3\in\R^d$, one has $\nabla_{v_3}\nabla_{v_2}\nabla_{v_1}X_0^x=0$ and
		\begin{align*}
			\dif \nabla_{v_3}\nabla_{v_2}\nabla_{v_1}X_t^x=G_t\dif t,
		\end{align*}
		where
		\begin{align*}
			G_t=&-\nabla^4 U (X_t^{x})   \nabla_{v_3} X_t^{x}  \nabla_{v_3} X_t^{x}  \nabla_{v_3} X_t^{x}
			-\nabla^2 U (X_t^{x}) \nabla_{v_3} \nabla_{v_2}  \nabla_{v_1} X_t^{x}  \\
			& -\nabla^3U(X_t^{x})[\nabla_{v_3}  \nabla_{v_2} X_t^{x}\nabla_{v_1} X_t^{x}+\nabla_{v_3}  \nabla_{v_1} X_t^{x}\nabla_{v_2} X_t^{x}+\nabla_{v_1}  \nabla_{v_2} X_t^{x}\nabla_{v_3} X_t^{x}].
		\end{align*}
		
		Using It\^{o}'s formula for $f(x)=|x|^4$, we get
		\begin{align*}
			&\dif |\nabla_{v_3}\nabla_{v_2}\nabla_{v_1} X_t^{x}|^4\\
			=&\langle 4|\nabla_{v_3}\nabla_{v_2}\nabla_{v_1} X_t^{x}|^2\nabla_{v_3}\nabla_{v_2}\nabla_{v_1} X_t^{x},G_t\rangle\dif t\\
			\leq&\big[-4\alpha|\nabla_{v_3}\nabla_{v_2}\nabla_{v_1} X_t^{x}|^4+4\|\nabla^4U\|_{\mathrm{op}}|\nabla_{v_3}\nabla_{v_2}\nabla_{v_1} X_t^{x}|^3|\nabla_{v_1}X_t^{x}||\nabla_{v_2}X_t^{x}||\nabla_{v_3}X_t^{x}|\\
			&+4\|\nabla^3U\|_{\mathrm{op}}|\nabla_{v_3}\nabla_{v_2}\nabla_{v_1} X_t^{x}|^3
			(|\nabla_{v_1}  \nabla_{v_2} X_t^{x}||\nabla_{v_3} X_t^{x}|\\
			&+|\nabla_{v_1}  \nabla_{v_3} X_t^{x}||\nabla_{v_2} X_t^{x}|+|\nabla_{v_2}  \nabla_{v_3} X_t^{x}||\nabla_{v_1} X_t^{x}|)\big]\dif t.
		\end{align*}
		
		By Young's inequality, we have
		\begin{align*}
			\dif |\nabla_{v_3}\nabla_{v_2}\nabla_{v_1} X_t^{x}|^4
			\leq (-2\alpha |\nabla_{v_3}\nabla_{v_2}\nabla_{v_1} X_t^{x}|^4+F_t)\dif t,
		\end{align*}
		where
		\begin{align*}
			F_t=&K_1|\nabla_{v_3} X_t^{x}|^4|\nabla_{v_2} X_t^{x}|^4|\nabla_{v_1} X_t^{x}|^4\\
			&+K_2(|\nabla_{v_1}  \nabla_{v_2} X_t^{x}|^4|\nabla_{v_3} X_t^{x}|^4+|\nabla_{v_1}  \nabla_{v_3} X_t^{x}|^4|\nabla_{v_2} X_t^{x}|^4+|\nabla_{v_2}  \nabla_{v_3} X_t^{x}|^4|\nabla_{v_1} X_t^{x}|^4),
		\end{align*}
		$K_1$ and $K_2$  depend on $\alpha$ and $M$.
		
		It follows from \eqref{m1}, \eqref{m2} and the comparison theorem that
		\begin{align}\label{m3}
			\E[|\nabla_{v_3}\nabla_{v_2}\nabla_{v_1} X_t^{x}|^4]
			\leq &e^{-2\alpha t}\int_0^\tau e^{2\alpha s}\E[F_s]\dif s\nonumber\\
			\leq & e^{-2\alpha t}\int_0^\tau e^{2\alpha s}
			(K_1|v_1|^4|v_2|^4|v_3|^4e^{-12\alpha s}+K_2|v_1|^4|v_2|^4|v_3|^4e^{-6\alpha s})\dif s\nonumber\\
			\leq& C|v_1|^4|v_2|^4|v_3|^4e^{-2\alpha t}.
		\end{align}

		(iv) Similarly, combining \eqref{m1}-\eqref{m3},  It\^{o}'s formula, Young's inequality and the comparison theorem, we can get
		\begin{align*}
			\E[|\nabla_{v_4}\nabla_{v_3}\nabla_{v_2}\nabla_{v_1}X_t^x|^{2}] \leq &C|v_1|^2|v_2|^2|v_3|^2|v_4|^2e^{-\alpha t}.
		\end{align*}
		 The proof is complete.
	\end{proof}

	\Acknowledgements{We would like to thank Xiao Fang and Zhuo-Song Zhang for helpful suggestions. This work was partly  supported by National Natural Science Foundation of China (Grant Nos. 12271475 and U23A2064).}
		\bibliographystyle{plainnat}

\end{document}